\subjclass[2020]{16W55, 16W60, 16S99.}
\newtheorem{theorem}{Theorem}[section]
\newtheorem{corollary}[theorem]{Corollary}
\newtheorem{proposition}[theorem]{Proposition}
\theoremstyle{definition}
\newtheorem{definition}[theorem]{Definition}
\newtheorem{example}[theorem]{Example}
\newtheorem{remark}[theorem]{Remark}
\newtheorem{problem}[theorem]{Problem}
\def\proof{\par\pushQED{\qed}\normalfont\topsep6\p@ \trivlist \item[\hskip 20pt \itshape\proofname\@addpunct{.}\hskip\labelsep] \ignorespaces}
\def\fork{\mathrel{\raise0.2ex\hbox{\ooalign{\hidewidth$\vert$\hidewidth\cr\raise-0.9ex\hbox{$\smile$}}}}}
\renewcommand{\emptyset}{\varnothing}
\renewcommand{\L}{\mathcal{L}}
\newcommand{\p}{\mathfrak{p}}
\newcommand{\cl}{{\bf\mathrm{cl}}}
\renewcommand{\a}{\mathfrak{a}}
\newcommand{\m}{\mathfrak{m}}
\newcommand{\Z}{\mathbb{Z}}
\newcommand{\ev}{_{\overline{0}}}
\newcommand{\od}{_{\overline{1}}}
\renewcommand{\ker}{\mathrm{Ker}}
\renewcommand{\qed}{\hfill$\square$}
\renewcommand{\to}
{\longrightarrow}
\newcommand{\rank}{\mathrm{rank}}
\renewcommand{\mapsto}{\longmapsto}
\renewcommand{\O}{\mathcal{O}}
\newcommand{\J}{\mathfrak{J}}
\newcommand{\q}{\mathfrak{q}}
\renewcommand{\b}{\mathfrak{b}}
\newcommand{\Y}{\mathrm{supp}}
\newcommand{\U}{U}
\newcommand{\K}{\Bbbk}
\newcommand{\Zar}{ZR}
\title{Valuations on superrings} 
\author{Pedro  Rizzo}
\address{Instituto de Matemáticas\\ FCEyN\\Universidad de Antioquia}
\email{pedro.hernandez@udea.edu.co}
\author{Joel Torres del Valle} 
\email{joel.torres@udea.edu.co}
\author{Alexander Torres-Gomez} 
\email{galexander.torres@udea.edu.co}
\date{\today}
\keywords{Valuation pair, valuation superring, Zariski-Riemann superspace, abstract non-singular supercurve.}
\begin{document} 

\maketitle

\begin{abstract}   
A valuation theory for superrings is developed, extending classical constructions from commutative algebra to the $\mathbb Z_2$-graded and supercommutative setting. We define valuations on superrings, investigate their fundamental properties, and explore the construction of Zariski-Riemann superspaces.
\end{abstract} 
 

\section*{Introduction}

The theory of valuations for fields is well-established and exhibits significant applications in number theory and algebraic geometry. Classical works such as \cite{Bou} and \cite{SZ} provide foundational insights into this theory. Furthermore, valuation theory has been extended beyond fields to encompass commutative rings, which allow for the existence of zero divisors, see \cite{BL, Bra, KL, Man1}. Within this broader framework, many classical results from the field setting, such as approximation theorems and the fundamental inequality, admit analogous formulations. 

The scope of valuation theory has further expanded to include non-commutative rings, as evidenced by works such as \cite{Pir} and \cite{Sch}. The continued development of valuation theory underscores its dynamic and significant nature within mathematical research. For example, \cite{Gu} develops a valuation theory for modules. Furthermore, the utility of valuations is highlighted by their role in proving cases of a conjecture by Berger \cite{Ber}, which asserts that for a one-dimensional complete local reduced $k$-algebra $(R, \m_R, k)$ over a field of characteristic zero, $R$ is regular if and only if the universally finite module of differentials $\Omega_R$ is torsion-free, as shown in \cite{MM}. Additionally, \cite{BJ} utilizes valuations to study the log canonical threshold and the stability threshold of a big line bundle $L$ on a normal complex projective variety $X$ with at most klt singularities. 

Zariski-Riemann spaces establish an important connection between valuations and algebraic geometry. Specifically, given a field extension $K \mid k$, one considers the set $\Zar(K, k)$ of valuation rings associated with non-trivial valuations on  $K$ that are non-negative on $k$. Pioneered by Zariski for their application in desingularization processes \cite{Z}, these objects have proven to be highly versatile in various applications and are, in fact, a fundamental component of Nagata's compactification theorem \cite{Nag}. Moreover, relative versions of these spaces are employed in the birational study of rigid geometry and generalized forms of de Jong’s theorem for the stable reduction of curves. For a broader overview of their applications in algebraic geometry, algebraic number theory, and other fields, we recommend \cite{PM} and the references therein.

The continued importance of Zariski-Riemann spaces in contemporary research is evident in studies such as \cite{O}. In this work, the Zariski–Riemann space $\Zar(F, D)$, where $F$ is the quotient field of a domain $D$, is analyzed. The authors explore the Zariski topology, the inverse topology (dual on a spectral space), and the patch topology (a Hausdorff refinement) on $\Zar(F, D)$ by viewing it as a projective limit of integral schemes with function fields contained in $F$, and they characterize the locally ringed subspaces of $\Zar(F, D)$ that are affine schemes. With respect to the patch topology, \cite{Spi} investigates isolated points of $\Zar(F, D)$, providing a complete characterization of when $F$ is isolated and when a valuation domain of dimension one is isolated. Consequently, they identify all isolated points of $\Zar(F, D)$ when $D$ is a Noetherian domain. Moreover, \cite{PS} focuses on pseudo-convergent sequences $E$ in $F$, studying two constructions that associate a valuation domain of $K(\Zar(F, D))$ lying over $D$. Additionally, \cite{HKK} examines cdh-descent using the functorial Riemann–Zariski space. Finally, the concept of relative Riemann–Zariski spaces is explored in \cite{T} within the context of a morphism of schemes, offering a generalization of the classical Riemann–Zariski space of a field. For a comprehensive and recent overview, \cite{O1} serves as an excellent introduction to the subject of Zariski-Riemann spaces.

While significant progress has been made in valuation theory, an analogous framework for superrings has yet to be established. The potential for extending the well-established constructions of commutative algebra into the $\mathbb{Z}_2$-graded and supercommutative realm remains an open area of investigation. This article seeks to address this gap by developing a valuation theory for superrings, with the aim of broadening the scope of valuation theory and contributing to the ongoing generalization of commutative structures to their ``super" counterparts. 

This article focuses on defining and investigating valuations on superrings. The primary aim is to advance this theory to a level where it can be utilized to address issues relevant to algebraic supergeometry, even in cases where specific applications are not elaborated upon within this work. While many of the proofs bear resemblance to their commutative counterparts (e.g., as found in \cite{IB, Man, Man1}, and \cite{SZ}), we have nonetheless chosen to include certain underlying concepts for the sake of completeness. Moreover, in numerous instances, the specific role of supercommutativity in obtaining these results will be clarified, highlighting its distinction from the use of ordinary commutativity.

We investigate whether the construction of Zariski-Riemann spaces can be generalized to the ``super" setting. To this end, for an extension of superfields $K\mid k$, we consider the topological space $X$ comprising the collection of valuation rings associated with non-trivial valuations on $K$ that are non-negative on $k$. We naturally equip $X$ with a structural sheaf, ensuring that the superreduced space recovers the classical information. Nevertheless, in the elementary case where $K\od\neq 0$ and $\overline{K}=k(x)$ is a field of rational functions over $k$, this construction does not yield the notion of an \textit{abstract non-singular supercurve}. The underlying reason for this failure is that in the ``super" context, the concepts of \textit{normality} and \textit{regularity} diverge when the even Krull superdimension of the superdomain is one (for details, see \cite[Remark 5.13 i)]{RTT}), unlike the commutative setting where they coincide for one-dimensional domains. Despite this, the theory developed herein and the resulting Zariski-Riemann superspaces seem to offer a promising approach to the exploration of birational supergeometry and its modeling with Zariski-Riemann superspaces, following the recent advancements in \cite{PM}.

The remainder of this paper is organized as follows: 

\Cref{SEC:1} lays the groundwork for our subsequent analysis by introducing the essential definitions and results from the theory of superrings.

\Cref{SEC:2} delves into the concept of valuation over a superring. Within this section, we derive several of its fundamental properties and give particular attention to the equivalence among the notions of valuation, valuation pair, and maximal pair. Furthermore, we explore the notion of $v$-convex ideals and extend the established correspondence between the isolated subgroups of a valuation's value group and the $v$-convex superideals of its valuation rings to the super setting (\Cref{Proposition:3.14}).

\Cref{SEC:4} focuses on the concept of dominance between valuations. This section also examines extensions of valuations and the property of being integrally closed for valuation superrings (\Cref{Proposition:4.12}). Additionally, we prove that if $R$ is integrally closed in $S$, then any valuation on $R$ can be extended to $S$ (\Cref{prop.3.13.ct}), and we investigate conditions under which extensions of valuations are unique up to a certain notion of equivalence (\Cref{Proposition:4.11}). 

\Cref{SEC:6} is dedicated to discussing further properties of valuations, including approximation theorems and the fundamental inequality. 

\Cref{SEC:7} introduces the definition of the Zariski-Riemann space in the super setting and provides a comparison with the purely even case. Finally, this section also outlines potential avenues for future research and some geometric implications of the present work.


\section{Basic definitions}\label{SEC:1}
 

In this section, we introduce fundamental definitions and properties of superrings, which are essential for the development of this work. Our exposition on this subject is concise, and for a more thorough examination of the elementary theory of superrings, refer to \cite[Chapters 1-4]{W}.

\subsection{Superrings}  

In this paper $\Z_2$ represents the group $\Z/2\Z$. Any ring $R$ in this paper is assumed to be unitary and non-trivial, i.e., $1\neq 0$.

\begin{definition}
    A $\Z_2$-graded ring $R=R\ev\oplus R\od$ is called a \emph{superring}. 
\end{definition}

Let $R$ be a superring. For any $x\in R$, we define the \textit{parity} of $x$ as

\[
|x|:=\begin{cases}
    0& \text{ if }x\in R\ev,\\
    1& \text{ if }x\in R\od.
\end{cases}
\]

The set $h(R):=R\ev\cup R\od$ is called the \textit{homogeneous elements of $R$} and each $x\in h(R)$ is said to be  \textit{homogeneous}.  A homogeneous nonzero element $x$ is called \emph{even} if $x\in R\ev$ and \textit{odd} if $x\in R\od$.  

Any superring $R$ in this work will be \textit{supercommutative}, i.e.,  $R\ev$ is central in $R$ and any element in $R\od$ squares to zero. This implies that  

\begin{equation}\label{Supercommutativity}
    xy=(-1)^{|x||y|}yx,\quad \text{ for all }\quad x, y\in h(R);
\end{equation}

\noindent and \eqref{Supercommutativity} is equivalent to supercommutativity if 2 is a non-zerodivisor in $R$. Observe that if $R$ is supercommutative, then $R\ev$ is a commutative ring. 

\begin{definition}    
Let $R$ and $S$ be superrings. A \textit{morphism} $ \phi :R\to S$ is a ring homomorphism \emph{preserving the parity}, that is, such that $ \phi (R_{i})\subseteq S_{i}$ for all $i\in\Z_2$. A morphism $ \phi :R\to S$ is an \textit{isomorphism} if it is bijective. In this case, we say that $R$ and $S$ are \emph{isomorphic superrings}, which we denote by $R\cong S$. 
\end{definition}

\begin{definition}
Let $R$ be a superring.

\begin{enumerate} 

\item[i)] A \textit{superideal} of $R$ is an ideal $\a$ of $R$ such that  $\a=(\a\cap R\ev)\oplus(\a\cap R\od).$

\item[ii)] An ideal $\p$ of $R$ is \textit{prime} (resp. \textit{maximal}) if $R/\p$ is an integral domain (resp. a field).
\end{enumerate}
\end{definition}

\begin{remark}\label{rmk:prime-ideal} Let $R$ be a superring. 

\begin{enumerate} 
\item[i)] If $\a$ is a superideal of $R$, we denote $\a_{i}:=\a\cap R_{i}$ for each $i\in\Z_2$.
\item[ii)]  Any superideal $\a$ is a two-sided ideal.
\item[iii)] If $\a$ is a superideal of $R$,  then the quotient ring $R/\a$ is a superring with $\Z_2$-gradding given by $R/\a=(R\ev/\a\ev)\oplus(R\od/\a\od).$ 
\item[iv)] Any prime ideal is completely prime \cite[Lemma 4.1.2]{W}.  
\item[v)] Any prime ideal of $R$ is a superideal of the form $\p=\p\ev\oplus R\od$, where $\p\ev$ is a prime ideal of the ring $R\ev$ \cite[Lemma 4.1.9]{W}.
\item[vi)] Any maximal ideal of $R$ is of the form $\m=\m\ev\oplus R\od$, where $\m\ev$ is a maximal ideal of $R\ev$.
\end{enumerate}
\end{remark}

\begin{definition}\label{def.sev.cond} Let $R$ be a superring. 

\begin{itemize}
    \item[i)] The ideal $\J_R=R\cdot R\od=R\od^2\oplus R\od$ is called the \textit{canonical superideal of $R$}.
    \item[ii)] The \textit{superreduced} of $R$ is the commutative ring $\overline{R}=R/\J_R\cong R\ev/R\od^2$. 
    \item[iii)] $R$ is called a \textit{superdomain} if $\overline{R}$ is a domain or, equivalently, $\J_R$ is a prime ideal. 
    \item[iv)] $R$ is called a \textit{superfield} if $\overline{R}$ is a field or, equivalently, $\J_R$ is a maximal ideal.
    \item[v)] We say that $R$ is \textit{split} if $R=\overline{R}\oplus\J_R$.
\end{itemize}
\end{definition}

\begin{definition} A superring $R$ is called \emph{Noetherian} if $R$ satisfies  the ascending chain condition on superideals, or, equivalently, any superideal $\a$ of $R$ is \emph{finitely generated}, i.e., there exist finitely many homogeneous elements $a_1,\ldots, a_n\in \a$  such that         $\a=Ra_1+\cdots+Ra_n.$
\end{definition}

\begin{definition}
    Let $R$ be a superring and $\U\subseteq R\ev$ a multiplicative set. \emph{The localization $\U^{-1}R$ of $R$ at $\U$}, is the superring $\U^{-1}R:=(\U^{-1}R\ev)\oplus(\U^{-1}R\od)$, where $R\od$ is regarded as $R\ev$\,-module. An element in $\U^{-1}R$ is denoted by $x/y$ or $y^{-1}x$, where $x\in R$ and $y\in \U$. If $\U=R\ev \setminus \p\ev$, where $\p$ is a prime ideal, $\U^{-1}R$ is often written as $R_\p$. If $\U$ is the set of non-zerodivisors of $R\ev$, the superring $\U^{-1}R$ is denoted by  $K(R)$ and is called \emph{the total superring of fractions of $R$}.  
\end{definition}

\begin{definition}
A superring $R$ is said to be  \emph{local} if it has a unique maximal ideal. 
\end{definition} 

\subsection{Krull's Lemma}

\begin{proposition}[``Super'' Krull's Lemma]\label{Super:Krull}
    If $R$ is a superring, $\q$ is a prime ideal of $R$ and $\a$ is any superideal of $R$ not intersecting $\U=R\ev\setminus\q\ev$, then there exists a prime ideal $\p$ of $R$ containing $\a$ and not intersecting $\U$ and neither $R\setminus\q$.
\end{proposition}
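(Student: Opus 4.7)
The plan is to apply Zorn's lemma to the collection
\[
\mathcal{F} := \{\b \subseteq R : \b \text{ is a superideal},\ \a \subseteq \b \text{ and } \b \cap \U = \emptyset\},
\]
ordered by inclusion. This family is nonempty because $\a \in \mathcal{F}$, and the union of any chain in $\mathcal{F}$ is again a superideal of $R$ containing $\a$ and disjoint from $\U$. Hence Zorn's lemma yields a maximal element $\p \in \mathcal{F}$, and the remaining task is to verify that $\p$ is prime and that $\p \subseteq \q$.

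In view of Remark~\ref{rmk:prime-ideal}(v), a prime ideal of $R$ necessarily has the form $\p\ev \oplus R\od$ with $\p\ev$ a prime ideal of $R\ev$. I therefore split the verification into two steps: (a) show $R\od \subseteq \p$; (b) show $\p\ev$ is prime in $R\ev$. For (a) I enlarge $\p$ to the superideal $\p + \J_R$, whose even part is $\p\ev + R\od^2$ and whose odd part is $R\od$. Since $\q$ is prime, Remark~\ref{rmk:prime-ideal}(v) yields $R\od \subseteq \q$, whence $R\od^2 \subseteq \q\ev$. Combined with $\p\ev \subseteq \q\ev$ (which follows from $\p \cap \U = \emptyset$), this gives $(\p\ev + R\od^2) \cap \U = \emptyset$, so $\p + \J_R \in \mathcal{F}$. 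Maximality of $\p$ then forces $\J_R \subseteq \p$, and in particular $R\od \subseteq \p$.

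For (b) I follow the standard commutative Krull argument adapted to the even part. Assume $x, y \in R\ev$ satisfy $xy \in \p\ev$ but $x, y \notin \p\ev$. Then $\p + Rx$ and $\p + Ry$ are superideals strictly larger than $\p$, so by maximality they meet $\U$; taking even components (which is harmless because $\U \subseteq R\ev$ and $\p$ is a superideal) one obtains $u = p_1 + r_1 x$ and $v = p_2 + r_2 y$ in $\U$ with $p_i \in \p\ev$ and $r_i \in R\ev$. Expanding $uv$ in the commutative ring $R\ev$ and invoking $xy \in \p\ev$ together with the fact that $\p\ev$ is closed under multiplication by $R\ev$, one concludes $uv \in \p\ev \cap \U$, contradicting $\p \cap \U = \emptyset$. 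Combining (a), (b), and Remark~\ref{rmk:prime-ideal}(v), $\p = \p\ev \oplus R\od$ is prime; finally $\p \subseteq \q$ follows from $\p\ev \subseteq \q\ev$ and $\p\od = R\od = \q\od$.

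The main obstacle relative to the purely commutative case is step (a): in the ordinary Krull lemma the Zorn-maximal ideal is automatically prime, whereas in the super setting one must manually enforce $R\od \subseteq \p$ in order to match the shape $\p\ev \oplus R\od$ demanded by Remark~\ref{rmk:prime-ideal}(v). This enforcement is precisely where the supercommutative hypothesis enters, through the inclusion $R\od^2 \subseteq \q\ev$ provided by $R\od \subseteq \q$.
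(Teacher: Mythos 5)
Your proof is correct, but it takes a different route from the paper's. The paper disposes of the statement in two lines: it applies the classical Krull lemma to the commutative ring $R\ev$ with the ideal $\a\ev$ and the multiplicatively closed set $\U=R\ev\setminus\q\ev$ to produce a prime $\p\ev\supseteq\a\ev$ disjoint from $\U$, and then simply sets $\p=\p\ev\oplus R\od$, which is prime in $R$ by \Cref{rmk:prime-ideal} v) and automatically contains $\a$ and sits inside $\q$. You instead rerun the Zorn argument from scratch at the level of superideals of $R$, which forces you to do the extra work of step (a): a Zorn-maximal superideal disjoint from $\U$ need not a priori contain $R\od$, so you must enlarge by $\J_R$ and check that $\p\ev+R\od^2$ stays inside $\q\ev$ (using $R\od^2\subseteq\q\ev$, which is where supercommutativity and the primeness of $\q$ enter). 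That step is exactly the complication the paper avoids by never leaving $R\ev$. Your approach buys self-containedness and makes the role of the odd part explicit; the paper's buys brevity by outsourcing everything to the even reduction. One small point you leave implicit in step (b): the conclusion $uv\in\p\ev\cap\U$ is only a contradiction because $\U$ is multiplicatively closed, which holds since $\q\ev$ is a prime ideal of $R\ev$; this is standard but worth a word. Otherwise the argument is complete.
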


\begin{proof}
    Since $\a\ev\cap \U=\emptyset$, it suffices to find a prime ideal $\p\ev$ of $R\ev$ such that $\a\ev\subseteq\p\ev$ and $\p\ev\cap \U=\emptyset$, because in a such case, the prime ideal  $\p=\p\ev\oplus R\od$ is such that $\a\subseteq\p$ and, moreover $\p\cap \U=\emptyset$. The existence of $\p\ev$ is given by Krull's Lemma \cite[p.253]{J}. 
\end{proof}

\subsection{Supermodules} 

From now on, a ring $R$ is always meant to be a superring.

\begin{definition}
     A left $\Z_2-$graded $R-$module $M=M\ev\oplus M\od$ is called an $R-$\emph{supermodule}.
\end{definition} 

Let $M$ be an $R-$supermodule. The \textit{set of homogeneous elements of $M$} is $h(M):=M\ev\cup M\od$, and the \textit{parity} of $m\in h(M)$ is defined in the obvious way. If $l$ is a left action of $R$ on $M$, we define a right action $r$ of $R$ on $M$ by

\[
r(m, a)=(-1)^{|a||m|}l(a, m),\quad \text{ for all \quad $x\in h(R)$ and $m\in h(M)$}.
\]

\noindent These  actions commute. Thus, $M$ is both a right and a left $R-$supermodule. Hereafter any  supermodule is assumed to have these two compatible structures. 

\begin{definition}
    An $R$-supermodule $M$ is said to be \textit{finitely generated} if one can find finitely many homogeneous elements $f_1, \ldots, f_n\in M$ such that $M$ has a decomposition of the form $M=Rf_1 +\cdots +Rf_n.$ 
\end{definition}

\subsection{Integrally closed superrings} 

Let $R, S$ be superrings. In this subsection, if we write $R\subseteq S$ we mean that they have the same unit and that the $\Z_2$-gradings are compatible: $R_i=R\cap S_i$, for all $i=\overline{0}, \overline{1}$.  

\begin{definition}\label{Def:1.12}
    Let $R\subseteq S$ be superrings.
    
    \begin{itemize}[itemsep=2pt]
        \item[i)] A homogeneous $x\in S$ is said to be \textit{integral over} $R$ if for some unitary finitely generated $R$-supermodule, say $M=Rx_1+\cdots+ Rx_n$, where $x_1, \ldots, x_n$ are even elements in the center $\mathcal{Z}(S)$ of $S$, we have $x M\subseteq M$.     

        \item[ii)] An arbitrary  $x\in S$ is  \textit{integral over} $R$ if its homogeneous components are so.

        \item[iii)] The \textit{integral closure of $R$ in $S$}, $\cl_S(R)$, is the set of elements in $S$ that are integral on $R$. If $S=K(R)$, $\cl_S(R)$ is written by $\cl(R)$.

        \item[iv)] The superring $R$ is said to be \textit{integrally closed in} $S$ if $R=\cl_S(R)$.

        \item[v)] If $R$ is  integrally closed in $S=K(R)$, we say that $R$ is \textit{normal}. 
    \end{itemize}
\end{definition} 

It is not hard to see that $\cl_S(R)$ is a superring containing $R$ and that $\cl_S(R)$ is integrally closed in $S$. 

\begin{proposition}\label{Proposition:2.12} If $x\in S$ is integral over $R$ and commutes with all of $R$, then $x$ satisfies an equation of the form

    \[
    x^n+\sum_{0\leq i\leq n-1}a_ix^{i}=0, \quad  \text{where} \quad a_0, a_1, \ldots, a_{n-1}\in R.
    \]

    \noindent Conversely, if $x$ is an element in the center of $S$ satisfying an equation of this form, then $x\in \cl_S(R)$.  
\end{proposition}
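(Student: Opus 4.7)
The plan is to adapt the classical Cayley--Hamilton / determinant trick to the super setting, exploiting two $\Z_2$-graded peculiarities: (a) for an even element, the matrix produced by the action on generators can be arranged to have entries in the commutative subring $R\ev$, so that the classical adjugate identity still applies; and (b) for an odd element, supercommutativity gives $x^2=0$ for free, bypassing any determinant issue. Throughout, one reduces to $x$ homogeneous via \Cref{Def:1.12} ii), after noting that a parity projection shows $x = x\ev + x\od$ commutes with $R$ if and only if each homogeneous component does.

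For the forward direction, assume first $x$ is even. Writing $M = Rx_1 + \cdots + Rx_n$ with each $x_i \in S\ev \cap \mathcal{Z}(S)$ and $xM \subseteq M$, express
\[
xx_j = \sum_i a_{ij}\, x_i, \qquad a_{ij}\in R.
\]
Since $x$ and the $x_i$ are even, the products $xx_j$ are even, so we may replace each $a_{ij}$ by its even part, placing the matrix $A=(a_{ij})$ over the commutative ring $R\ev$. Because $x$ commutes with $R$ and with each $x_i$ (by centrality), the entries of $xI - A$ lie in the commutative subring of $S$ generated by $x$ and $R\ev$, and the classical adjugate identity yields $\det(xI-A)\cdot x_i=0$ for every $i$. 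Taking ``unitary'' to mean $1\in M$ then forces $\det(xI-A)=0$, a monic polynomial equation over $R\ev\subseteq R$. If $x$ is odd, supercommutativity gives the monic equation $x^2=0$ directly. For a general $x = x\ev + x\od$, let $p(t) \in R\ev[t]$ be a monic polynomial satisfied by $x\ev$; then the super-Taylor identity (valid because $x\od^2 = 0$ and $x\ev$ commutes with $x\od$)
\[
p(x\ev + x\od) = p(x\ev) + p'(x\ev)\, x\od = p'(x\ev)\, x\od
\]
yields $p(x)^2 = p'(x\ev)^2\, x\od^2 = 0$, so $x$ satisfies the monic polynomial $p(t)^2 \in R[t]$.

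For the backward direction, the standard Kronecker construction works: given $x\in\mathcal{Z}(S)$ with $x^n + \sum_{i=0}^{n-1} a_i x^i = 0$, set
\[
M := R + Rx + Rx^2 + \cdots + Rx^{n-1}.
\]
Then $M$ is unitary and $xM\subseteq M$ since $x\cdot x^{n-1} = -\sum_i a_i x^i \in M$. When $x$ is homogeneous and even, the generators $1,x,\ldots,x^{n-1}$ are themselves even and central, matching \Cref{Def:1.12} i) directly; for non-homogeneous $x \in \mathcal{Z}(S)$ one reduces to the homogeneous components, which remain central by a parity projection, and splits the monic identity by parity to verify integrality of each component in the sense of \Cref{Def:1.12} ii).

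The main obstacle is the forward even case: one must justify that the $a_{ij}$ can be taken to be even, so that the matrix $A$ lives over the commutative ring $R\ev$ and the adjugate trick produces a genuinely \emph{monic} equation rather than a more exotic ``super-determinant'' relation. Once this is in place, the odd case is trivial via $x^2=0$, and the super-Taylor formula glues the two homogeneous cases into the general one without further input.
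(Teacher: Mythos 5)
Your forward direction is sound and is essentially the argument one expects (the paper itself only cites \cite[Proposition 5.4]{RTT} rather than giving a proof). The key super-specific step is handled correctly: projecting the relations $xx_j=\sum_i a_{ij}x_i$ onto the even component is legitimate because the left-hand side is even, so the odd part of the right-hand side must vanish; the resulting matrix then lives over the commutative ring $R\ev$, all elements involved commute (even elements of $S$ are central by supercommutativity, and $x$ commutes with $R$ by hypothesis), and the adjugate identity applies. The odd case via $x^2=0$ and the gluing $p(x\ev+x\od)=p(x\ev)+p'(x\ev)x\od$, whence $p(x)^2=0$, are both correct. The one caveat is your reading of ``unitary'' as $1\in M$: this (or faithfulness of $M$) is genuinely needed to pass from $\det(xI-A)\cdot M=0$ to $\det(xI-A)=0$, and since the paper never defines the term, making the assumption explicit is the right move.

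The backward direction, however, has a real gap for odd (hence for non-homogeneous) $x$, and it cannot be closed as you describe. \Cref{Def:1.12} i) requires the generators of the witnessing module to be \emph{even} central elements, so for the odd component $x\od$ the Kronecker module $R+Rx\od$ is inadmissible, and ``splitting the monic identity by parity'' produces no admissible module. In fact the claim is false for odd elements under the paper's definition: take $R=\K$ and $S=\K[\theta]$ with one odd variable. Then $\theta$ is central and satisfies the monic equation $\theta^2=0$, but any $M=Ry_1+\cdots+Ry_n$ with the $y_j$ even and central is contained in $S\ev=\K$, and $\theta M=\K\theta\not\subseteq\K$ unless $M=0$; hence $\theta\notin\cl_S(\K)$. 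So the converse holds only for $x\in S\ev\cap\mathcal{Z}(S)$, where your Kronecker argument is complete. This restriction is harmless for the paper, which only ever invokes the converse for even elements (in \Cref{Proposition:4.12}, \Cref{prop.3.13.ct} and \Cref{Proposition.5.3.8}), but you should either state it or flag the incompatibility between the literal statement and \Cref{Def:1.12}.
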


\begin{proof}
   See \cite[Proposition 5.4]{RTT}.
\end{proof}

\subsection{Ordered groups} 

In this paper any group is assumed to be abelian and with additive structure. 

\begin{definition}
    Let $\mathbb{G}$ be a group. We say that $\mathbb{G}$ is \textit{totally  ordered} or \textit{ordered} if there is a total order relation $\leq$ in $\mathbb{G}$, which is  invariant under translation. In other words, for any $x, y, z\in\mathbb{G}$, the relation $x\leq y$ implies that $z+x\leq z+y$.
\end{definition}

\begin{definition}
    Let $\mathbb{G}$ be a totally ordered group, and $\mathbb{H}\subseteq \mathbb{G}$.

    \begin{itemize} 
        \item[i)] We say that $\mathbb{H}$ is a \textit{segment} if it has the following property: if  $x\in\mathbb{G}$ belongs to $\mathbb{H}$, then for all $y\in\mathbb{G}$, $-x\leq y\leq x$ implies  $y\in\mathbb{H}$.
        \item[ii)] If $\mathbb{H}$ is a proper subgroup of $\mathbb{G}$ which is also a segment, it is called an \textit{isolated subgroup} of $\mathbb{G}$.
    \end{itemize}
\end{definition}

We write $\mathcal{S}(\mathbb{G})$ and  $\mathcal{I}(\mathbb{G})$ for the set of segments and isolated subgroups of $\mathbb{G}$, respectively. It is a folklore that if $\mathbb{G}$ is an ordered group, then $(\,\mathcal{S}(\mathbb{G}), \subseteq\,)$ and $(\,\mathcal{I}(\mathbb{G}), \subseteq\,)$ are total orders. 

\begin{remark}
    Let $\mathbb{G}$ be a group. We can add $\mathbb{G}$ a symbol $ \infty$ under the conventions $$\infty+\infty=\infty+\alpha=\alpha+\infty=\infty$$ and $\infty>\alpha$ for all $\alpha\in\mathbb{G}$. We keep $\infty\,-\,\infty$ undefined.
\end{remark}


\section{Valuations and Valuation Pairs}\label{SEC:2} 


In this section we define valuations and valuation pairs and prove their basic properties.  

\subsection{Valuations}

\begin{definition}\label{Def:of:valuation}
 
     A \textit{valuation}\index{valuation} on a superring $R$ is a pair $(v, \mathbb{G}\cup\{\infty\})$, where $(\mathbb{G}, +, 0)$ is an ordered abelian group,  and $v:R\to\mathbb{G}\cup\{\infty\}$ is a \textit{surjective} map such that the following conditions are satisfied 
 
     \begin{itemize}[itemsep=2pt]
        \item[i)] $v(0)=\infty$   and $v(1)=0$. 
        
        \item[ii)] $v(xy)=v(x)+v(y)$ for all $x, y\in R$. 
        
        \item[iii)]   $v(x+y)\geq\min\{v(x), v(y)\}$ for all $x, y\in R$.
    \end{itemize}

    \noindent We say that $v$ is \textit{trivial} if $\mathbb{G}=0$. Otherwise, $v$ is said to be \textit{nontrivial}. \index{valuation!trivial}\index{valuation!nontrivial}  
\end{definition}

Henceforth, a ring $R$ is always meant to be a superring and $v$ is a valuation on $R$ which is assumed to take values in a group $\mathbb{G}$. Thus, the pair $(v, \mathbb{G}\cup\{ \infty\})$ is often denoted simply by $(v, \mathbb{G})$ or $v$. A valuation $w$ (on any given superring) is always assumed to take values in a group $\mathbb{H}$. For instance, we may write $\mathbb{G}_u$ for the value group of a valuation $u$, and $\mathbb{H}_i$ for the value group of $w_i$, etc.

\begin{remark}
    \ 

\begin{itemize}
    \item[i)] Observe that we required for valuations the condition of being surjective. This attribute gives them the surname ``Manis'' in the commutative setting (cf. \cite{KnK, KZ, Man1}). We will use the term ``valuation'' throughout, with the understanding that we are considering only surjective valuations.  
    \item[ii)] Let $v$ be a valuation on $R$. Note that if $x\in R$, then $v(-x)=v(x)$ and if $x$ is also invertible, then $v(x^{-1})=-v(x)$ and if $v(x)\neq v(y)$, then $v(x+y)=\min\{v(x), v(y)\}$. More generally, if $x_1, \ldots, x_n\in R$, then 
    \[
    v(x_1+\cdots+x_n)\geq\min\{v(x_1), \ldots, v(x_n)\}
    \]
    with equality holding only when the minimum is uniquely attained by an $x_i$ (cf. \cite[p.33]{SZ}).
    \end{itemize}
\end{remark}

\begin{proposition}\label{P:v.sends.J.to.infinity}
    Let $v$ be a valuation on $R$. Then, $v(\J_R)=\{\infty\}$.
\end{proposition}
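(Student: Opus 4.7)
The plan is to reduce to the case of odd elements and then exploit that odd elements are nilpotent of order two in the supercommutative setting. The structure $\J_R = R\od^2 \oplus R\od$ means any $x \in \J_R$ can be decomposed as $x = y + z$ with $y \in R\od^2$ and $z \in R\od$, and the valuation axioms (ii) and (iii) will handle the passage from the odd generators to the whole superideal.

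First I would handle the core case $z \in R\od$. By supercommutativity (\Cref{def.sev.cond} lives in a supercommutative superring, and the odd-square-zero condition was stated immediately after the definition of supercommutativity), we have $z^2 = 0$. Applying axiom (ii) of \Cref{Def:of:valuation} gives
\[
2\,v(z) = v(z) + v(z) = v(z^2) = v(0) = \infty.
\]
Now I invoke the fact that $\mathbb{G}$ is a group: if $v(z)$ were an element of $\mathbb{G}$, then $2v(z) \in \mathbb{G}$ as well, which contradicts $2v(z) = \infty$. Therefore $v(z) = \infty$, proving $v(R\od) = \{\infty\}$.

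Next I would promote this to $R\od^2$. Any $y \in R\od^2$ can be written as a finite sum $y = \sum_{i=1}^n z_i z_i'$ with $z_i, z_i' \in R\od$. By axiom (ii), $v(z_i z_i') = v(z_i) + v(z_i') = \infty + \infty = \infty$ for each $i$, and then axiom (iii) applied iteratively yields $v(y) \geq \min_i v(z_i z_i') = \infty$, hence $v(y) = \infty$. Finally, for an arbitrary $x \in \J_R$, decompose $x = y + z$ with $y \in R\od^2$ and $z \in R\od$; axiom (iii) gives $v(x) \geq \min\{v(y), v(z)\} = \infty$, so $v(x) = \infty$. Since $0 \in \J_R$ and $v(0) = \infty$, we conclude $v(\J_R) = \{\infty\}$.

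There is no real obstacle here; the only point that deserves care is the cancellation step $2v(z) = \infty \Rightarrow v(z) = \infty$, which relies on $\mathbb{G}$ being an actual group (so closed under doubling) rather than a monoid, together with the convention that $\infty$ is absorbing under addition. The supercommutativity hypothesis plays the essential role — without the property $z^2 = 0$ for odd $z$, the conclusion would fail and $\J_R$ would not automatically lie in the ``infinite-value'' locus of every valuation.
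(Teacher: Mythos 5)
Your proof is correct and follows essentially the same route as the paper's: first show $v$ kills $R\od$ via $z^2=0$ and axiom (ii), then extend to $R\od^2$ and to all of $\J_R=R\od^2\oplus R\od$ using the sum inequality (iii). Your explicit justification of the cancellation step $2v(z)=\infty\Rightarrow v(z)=\infty$ (via closure of $\mathbb{G}$ under addition) is a detail the paper leaves implicit, but the argument is the same.
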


\begin{proof}
    If $x\in R\od$, then $x^2=0$ and $\infty=v(0)=v(x^2)=v(x)+v(x)$, so $v(x)=\infty$. Let $x=x\ev+x\od\in\J_R$. Since $v(x\od)=\infty$, we have $v(x)=v(x\ev)$. Observe that, $x\ev=s_1r_1+\cdots+s_nr_n$, for some $s_i, r_i\in R\od$. Then, $v(x)=v(x\ev)=v(s_1r_1+\cdots+s_nr_n)\geq\min\{v(s_1r_1), \ldots, v(s_nr_n)\}=\infty.$ Thus, $v(x)=\infty$ for all $x\in\J_R$.
\end{proof}

\subsection{Valuation pairs} Let $v$ be a valuation on $R$. It is not hard to see that $$A_v:=\{x\in R\mid v(x)\geq0\}$$ is a subsuperring of $R$ and $$\p_v:=\{x\in R\mid v(x)>0\}$$ is a prime superideal of $A_v$ containing $\J_R$. This motivates the following definition of valuation pair. 
 
\begin{definition}\label{Def:ValuationPair}
By a \textit{valuation pair of}  $R$, we mean a pair $(A, \p)$ such that:\index{valuation pair}
\begin{itemize}
        \item[i)] $A$ is a subsuperring of $R$ and $\p$ is a prime ideal of $A$ (containing $\J_R$).
        \item[ii)] For all $x\in R \setminus A$, there exists $x'\in\p\ev$ such that $xx'\in A \setminus \p$.
    \end{itemize}
\end{definition} 

\begin{proposition}\label{Theorem:3.4}
\it Let $(A, \p)$ be a valuation pair of $R$. Then, there is a valuation $v$ on $R$ such that

\begin{equation}\label{eq.th:3.4}
    A=A_v\quad\text{and}\quad\p=\p_v.
\end{equation}
Conversely, the pair $(A_v, \p_v)$ associated to a valuation $v$ on $R$ is a valuation pair of $R$.
\end{proposition}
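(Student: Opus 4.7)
The plan is to handle the two directions separately. The converse---from a valuation $v$ to a valuation pair $(A_v, \p_v)$---is essentially routine verification. Axioms (i)--(iii) of \Cref{Def:of:valuation} yield immediately that $A_v$ is a subsuperring of $R$ containing $1$ and that $\p_v$ is a prime superideal of $A_v$, while $\J_R \subseteq \p_v$ is the content of \Cref{P:v.sends.J.to.infinity}. For condition (ii) of \Cref{Def:ValuationPair}, given $x \in R \setminus A_v$ we have $v(x) < 0$; surjectivity of $v$ yields $y \in R$ with $v(y) = -v(x) > 0$, and since $v(y\od) = \infty$ by \Cref{P:v.sends.J.to.infinity}, it follows that $v(y\ev) = v(y) > 0$. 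Hence $x' := y\ev$ is even and satisfies $v(x') > 0$, so $x' \in \p_v$; since $v(xx') = v(x) + v(x') = 0$, we conclude $xx' \in A_v \setminus \p_v$.

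For the forward direction, given a valuation pair $(A, \p)$, I would adapt the classical Manis construction (as in \cite{Man1}) to the super setting. The strategy is: first, identify a prime superideal $\I \subseteq R$ containing $\J_R$ that will serve as the preimage of $\infty$ under $v$ (built, for instance, as the set of $x \in R$ whose multiples landing in $A$ all lie in $\p$); second, on $R \setminus \I$ introduce the equivalence relation $x \sim y$ iff there exist $a, b \in A \setminus \p$ with $ax \equiv by \pmod{\I}$, and set $\mathbb{G} := (R \setminus \I)/{\sim}$; third, define the group operation $[x] + [y] := [xy]$ (written additively, with identity $0 := [1]$) together with the order $[x] \leq [y]$ iff $y \in Ax + \I$; fourth, define $v : R \to \mathbb{G} \cup \{\infty\}$ by $v(x) = [x]$ on $R \setminus \I$ and $v(x) = \infty$ on $\I$. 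The axioms of \Cref{Def:of:valuation} and the surjectivity of $v$, as well as the identifications $A_v = A$ and $\p_v = \p$, then follow by construction.

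The decisive use of condition (ii) of \Cref{Def:ValuationPair} occurs in two places. First, to provide additive inverses in $\mathbb{G}$: if $x \notin A$, (ii) directly supplies $x' \in \p\ev$ with $xx' \in A \setminus \p$, giving $[x] + [x'] = 0$; if $x \in \p \setminus \I$, one passes to a ``reciprocal'' representative built by applying (ii) to a suitable element in the other direction. Second, to establish totality of $\leq$: for arbitrary $[x], [y] \in \mathbb{G}$ one compares them by applying (ii) to a representative of their formal quotient, so that one side is normalized to an element of $A$. Without condition (ii) the quotient would only be a partially ordered abelian monoid.

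The main obstacle I anticipate is the bookkeeping around the $\mathbb{Z}_2$-grading rather than the algebraic skeleton, which mirrors the commutative Manis argument. One must verify that $\I$ is genuinely a prime superideal and contains $\J_R$ (the latter following from the fact that $\J_R$ is a two-sided ideal of $R$ already contained in $\p$), that the equivalence relation and group law on $\mathbb{G}$ respect parity and absorb the sign conventions of supercommutativity, and that $A_v$ is recovered as a subsuperring rather than merely a subring. Thanks to \Cref{P:v.sends.J.to.infinity}, every odd element lands in the $\infty$-class of $v$, so these grading checks ultimately reduce to the classical commutative Manis argument applied to the even part of the structure.
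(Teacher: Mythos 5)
Your proposal is correct and follows essentially the same route as the paper: both directions reduce to the Manis construction of the value group as a quotient of $R$ by an equivalence relation read off from the pair $(A,\p)$, with addition $[x]+[y]:=[xy]$ and an order making $A$ and $\p$ the nonnegative and positive parts; the paper merely realizes the equivalence and the order via the colon sets $(x:\p)_{R\ev}$ instead of your congruence modulo the support ideal $\I$, and these formulations are interchangeable. Your treatment of the converse (extracting an even $x'=y\ev$ from surjectivity together with $v(y\od)=\infty$) is in fact more detailed than the paper's ``the converse is immediate.''
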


\begin{proof} Let $x, y\in R$ and $(x:\p)_{R\ev}:=\{z\in R\ev\mid zx\in\p\}.$ We define an equivalence relation 

\begin{center}
    $x\sim y$ if $(x:\p)_{R\ev}=(y:\p)_{R\ev}$.
\end{center}
The symbol $v(x)$ stands for the equivalence class of $x\in R$, and we define $0:=v(1)$ and $ \infty:=v(0)=R\ev$. Let $\mathbb{G}\cup\{ \infty\}:=R/\sim$ be the set of equivalence classes, where $\mathbb{G}:=(R/\sim) \setminus \{ \infty\}$. We define the addition $v(x)+v(y):=v(xy)$, and the order 
\begin{center}
    $v(x)\geq v(y)$ if $v(x)=v(y)$ or $v(x)>v(y)$,
\end{center}
where 
\begin{center}
    $v(x)>v(y)$ if there is some $z\in R\ev$  with $zx\in\p$ and $zy\in A-\p$,
\end{center}
i.e., $v(x)>v(y)$ if and only if  $(x:\p)_{R\ev}-(y:\p)_{R\ev}\neq\emptyset$. This order is  total and invariant by translation. Further, $ \infty>\alpha$ for all $\alpha\in\mathbb{G}$. It follows that $(v, \mathbb{G})$ is a valuation and \eqref{eq.th:3.4} holds. The converse is immediate.
\end{proof}

Hereafter, if $\mathbb{G}$ and $\mathbb{H}$ are ordered groups and $h:\mathbb{G}\to\mathbb{H}$ is a group homomorphism preserving the order, we simply say that it is an \textit{order homomorphism}.\index{order homomorphism} We also use the notation  $\mathbb{G}_\infty:=\mathbb{G}\cup\{ \infty\}$ and extend $h$ to get   $h:\mathbb{G}_\infty\to\mathbb{H}_\infty$ as $h(\alpha)= \infty$ if and only if $\alpha= \infty$. However, we maintain the notation $h:\mathbb{G}\to\mathbb{H}$ for it, except when clearness is at risk. 

\begin{remark}\label{REMAKR:6.1.6}
    Let $v$, $w$ be valuations on $R$. We say that they are \textit{equivalent}\index{valuations!equivalent} if there is an order isomorphism $h:\mathbb{G}\to\mathbb{H}$ such that $h\circ v=w$. It is easy to see that this defines an equivalence relation on the set of valuations on $R$. Now, if $v$ and $w$ are equivalent valuations on $R$, then $v$ and $w$ induce the same valuation pair; that is, $(A_w, \p_w)=(A_v, \p_v)$. Conversely, the valuation induced on $R$ by $(A_v, \p_v)$, as in the proof of \Cref{Theorem:3.4}, is equivalent to $v$ (cf. \cite[pp.33-34]{SZ}). We are then allowed to speak of \textit{the  valuation $v$ determined by a valuation pair} $(A, \p)$ and of \textit{the valuation pair $(A_v, \p_v)$ determined by a valuation} $v$. 
\end{remark}

\begin{proposition}\label{Proposition:3.6} \it Let $v$ be a valuation on $R$. If $x\in R \setminus A_v$ is invertible, then $x^{-1}\in A_v$.
\end{proposition}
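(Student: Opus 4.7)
The plan is to argue directly from the axioms of a valuation together with \Cref{P:v.sends.J.to.infinity}. Since $x$ is assumed invertible in $R$, there exists $x^{-1}\in R$ with $xx^{-1}=1$. Applying $v$ and using property ii) of \Cref{Def:of:valuation}, this immediately gives
\[
v(x)+v(x^{-1}) \;=\; v(xx^{-1}) \;=\; v(1) \;=\; 0.
\]

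The first thing to check is that both $v(x)$ and $v(x^{-1})$ lie in $\mathbb{G}$ (not at $\infty$), so that the equality above can be read as $v(x^{-1})=-v(x)$ in the group $\mathbb{G}$. If $v(x)=\infty$, then $v(x)+v(x^{-1})=\infty\neq 0$, a contradiction; so $v(x)\in\mathbb{G}$, and symmetrically $v(x^{-1})\in\mathbb{G}$. (Here the conventions on the symbol $\infty$ from the remark at the end of Section 1 are used, specifically that $\infty+\alpha=\infty$ for any $\alpha\in\mathbb{G}_\infty$.)

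Once we have $v(x^{-1})=-v(x)$ in $\mathbb{G}$, the hypothesis $x\in R\setminus A_v$ translates, by definition of $A_v$, into $v(x)<0$; then $v(x^{-1})=-v(x)>0$, which shows $x^{-1}\in A_v$ as required.

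There is no real obstacle here: the argument is a one-line consequence of multiplicativity of $v$, and the only subtlety is the preliminary check that invertibility of $x$ forces $v(x)$ (and hence $v(x^{-1})$) to be a genuine element of $\mathbb{G}$ rather than $\infty$. In particular, \Cref{P:v.sends.J.to.infinity} ensures consistency with the supercommutative setting: an invertible element cannot have any component in $\J_R$ contributing $\infty$, because that would force $v(x)=\infty$ and violate the computation above.
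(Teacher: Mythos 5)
Your argument is correct and is essentially the paper's own proof: the paper's one-line proof also reads off $v(x)<0$ from $x\notin A_v$ and concludes $v(x^{-1})=-v(x)>0$, relying on the identity $v(x^{-1})=-v(x)$ already recorded in the remark following \Cref{Def:of:valuation}. Your extra check that $v(x)\neq\infty$ is a harmless (and valid) elaboration of that same step.
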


\begin{proof}
    Since $x\in R \setminus A_v$, then $v(x)<0$, so $v(x^{-1})>0$ and therefore $x^{-1}\in A_v$. 
\end{proof}

\subsection{Maximal pairs} Let $\mathcal{C}$ be the set of pairs $(A, \p)$ such that $A$ is a subsupering of $R$ and $\p$ is a prime ideal of $A$ containing $\J_R$. We define an order relation in $\mathcal{C}$ in the following way
    
    \vspace{0.2cm}

    \begin{equation}\label{Eqn:Zorn}
    (A, \p)\preceq(B, \q)\quad \text{ if } \quad A\subseteq B \text{ and }\p=A\cap\q.
    \end{equation}

    \vspace{0.2cm}

    Thus, \eqref{Eqn:Zorn} defines a partial inductive order in $\mathcal{C}$. By Zorn's Lemma,  $\mathcal{C}$ has at least one $\preceq$-maximal element, which in the sequel is called a \textit{maximal pair}.
\medskip

In the realm of commutative rings (see e.g., \cite[Proposition 
 1 (i), (ii)]{Man1}),  valuation are maximal pairs  and \textit{viceversa}. This is also true in the setting of the present work, as the following proposition shows.\index{maximal pair}

\begin{proposition}
\it    A pair $(A, \p)$ is a valuation pair if and only if it is a maximal pair. 
\end{proposition}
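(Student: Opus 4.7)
The plan is to prove the two implications separately. The forward direction (valuation pair $\Rightarrow$ maximal) is a short deduction from \Cref{Def:ValuationPair}; the converse (maximal $\Rightarrow$ valuation pair) will be handled by contradiction, by producing a strictly larger element of $\mathcal{C}$ whenever the valuation-pair condition fails.

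For the forward direction I would take $(A, \p) \preceq (B, \q)$ with $(A, \p)$ a valuation pair. If some $b \in B \setminus A$ existed, \Cref{Def:ValuationPair}\,(ii) would supply $b' \in \p\ev$ with $bb' \in A \setminus \p$. Since $b' \in \p = A \cap \q \subseteq \q$ and $\q$ is an ideal of $B$, we would have $bb' \in \q$, and then $bb' \in A \cap \q = \p$, contradicting $bb' \notin \p$. Hence $B = A$, and then $\q = A \cap \q = \p$.

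For the converse, assume $(A, \p)$ is maximal but fails \Cref{Def:ValuationPair}\,(ii) for some witness $x \in R \setminus A$ (so $xx' \notin A \setminus \p$ for every $x' \in \p\ev$). The containment $\J_R \subseteq \p \subseteq A$ required by membership in $\mathcal{C}$, together with $R\od \subseteq \J_R$, forces $x\od \in A$, so we may replace $x$ by $x\ev$ and assume $x \in R\ev \setminus A$. I would then form $B := A[x]$, a subsuperring of $R$ strictly containing $A$, and aim to exhibit a prime ideal $\q$ of $B$ with $\q \cap A = \p$ and $\J_R \subseteq \q$, since this would yield $(A, \p) \prec (B, \q)$ in $\mathcal{C}$, contradicting maximality. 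Setting $S := A\ev \setminus \p\ev$, the ordinary Krull Lemma applied in $B\ev = A\ev[x]$ produces a prime $\q\ev \supseteq \p\ev B\ev$ disjoint from $S$ as soon as $\p\ev B\ev \cap S = \emptyset$. Taking $\q := \q\ev \oplus B\od$ then gives a prime superideal of $B$ automatically: odd elements are nilpotent in a supercommutative ring, so $B\od \cdot B\od \subseteq \nil(B\ev) \subseteq \q\ev$, making $\q$ two-sided with $B/\q \cong B\ev/\q\ev$ a domain. The identity $\q \cap A = \p$ follows at once from $A\od \subseteq B\od$ and $\q\ev \cap A\ev = \p\ev$ (forced by disjointness from $S$).

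The main obstacle is therefore the disjointness $\p\ev B\ev \cap S = \emptyset$. Any $y$ in this intersection can be written as $y = c_0 + c_1 x + \cdots + c_n x^n$ with $c_j \in \p\ev$, and the identity $y - c_0 = x(c_1 + c_2 x + \cdots + c_n x^{n-1}) \in A$ suggests a minimal-degree reduction. For $n = 1$ the failure hypothesis applied to $x' = c_1$ immediately yields $c_1 x \in \p$ and hence $y \in \p$. For $n \geq 2$ I would examine the top product $c_n x$: if $c_n x \in A$, the hypothesis forces $c_n x \in \p\ev$, and substituting $c_n x^n = (c_n x) x^{n-1}$ absorbs the leading monomial into the $x^{n-1}$-coefficient, producing a strictly lower-degree representation of $y$ and violating minimality; the complementary case $c_n x \notin A$ must be eliminated by a more subtle argument using primeness of $\p$ in $A$ together with supercommutativity. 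This delicate case analysis, which truly uses the failure of the valuation pair property at the chosen $x$, is the technically most intricate step.
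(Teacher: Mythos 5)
Your forward direction is complete and coincides with the standard argument: if $(A,\p)\preceq(B,\q)$ and $b\in B\setminus A$, the witness $b'\in\p\ev$ from \Cref{Def:ValuationPair} forces $bb'\in \q\cap A=\p$, contradicting $bb'\in A\setminus\p$. Your converse also follows the paper's route: reduce to $x\in R\ev\setminus A\ev$, form $B=A[x]$ and the ideal generated by $\p$, invoke \Cref{Super:Krull} to contradict maximality unless $\p\ev B\ev$ meets $A\ev\setminus\p\ev$, and then analyze a minimal-degree relation $\sum_{j}c_jx^j=y\in A\ev\setminus\p\ev$ with $c_j\in\p\ev$. The cases $n=1$, and $n\ge 2$ with $c_nx\in A$, are correctly disposed of.

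The proof is nevertheless incomplete: the remaining case $n\ge 2$ with $c_nx\notin A$ is precisely where the substance of the Manis-type argument lies, and you leave it unproved (``must be eliminated by a more subtle argument''). Your failure hypothesis yields nothing immediate there: taking $x'=c_n$ it only says $c_nx\notin A\setminus\p$, which is compatible with $c_nx\notin A$; worse, one checks that $z=c_nx$ then also violates the valuation-pair condition (since $zc=x(c_nc)$ with $c_nc\in\p\ev$ for every $c\in\p\ev$), so no quick contradiction is available. The paper handles exactly this step by passing to even components and then citing the purely even degree-reduction of \cite[Proposition 1.11]{IB}; some such argument (for instance, running the maximality/Krull step again for $A[c_nx]$ and combining the resulting relations, as in Manis's original proof) must be supplied before your proof closes. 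Note also that supercommutativity is not the missing ingredient: at this stage all of $x$, the $c_j$ and $y$ lie in the commutative ring $R\ev$, and what remains is exactly the classical commutative difficulty.
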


\begin{proof} The ``only if'' part of the proposition is shown as in the purely case \cite[Proposition 1.11]{IB}. 

For the ``if'' part, consider  a maximal pair $(A, \p)$ and $x\in R\ev \setminus A\ev$ and define  $B=A[x]$ and $\q=B\p$. It follows that $\q$ is a superideal in $B$ such that $\p\subseteq(\q\cap A)$. If $\p=A\cap\q$, then also $\p\ev=A\ev\cap\q\ev$, and if $x, y\in A\ev \setminus \p\ev$ are such that $xy\in\p\ev$, then we find that $x$ or $y$ belongs to $\p\ev=A\ev\cap\q\ev$, a contradiction. Thus, $\U=A\ev \setminus \p\ev$ is a multiplicatively closed subset of $B\ev$ with $\q\cap\U=\emptyset$. By \Cref{Super:Krull}, we can find some prime ideal $\p$ of $B$ containing $\q$ and not intersecting $A \setminus \p$. In other words, $\p=\p\cap A$ and $A\subseteq B$. That is, $(A, \p)\preceq (B, \p)$, contrary to the maximality assumption on $(A, \p)$. It then follows that $\p$ is properly contained in $\q\cap A$. This means that we can find $a_i\in\p$ with  $i=0, \ldots, n$, say, and $a'\in A \setminus \p$ such that  

    \begin{equation}\label{Equation:1}
       \sum_{0\leq i\leq n}a_ix^i=a'\in A \setminus \p.
    \end{equation}

Note that \[
a'=\sum_{0\leq i\leq n}a_ix^i=\sum_{0\leq i\leq n}(a_i)\ev x^i+\sum_{0\leq i\leq n}(a_i)\od x^i.\]
Thus, since  \[\sum_{0\leq i\leq n}(a_i)\od x^i\in\J_R\subseteq\p,\] we see that  \eqref{Equation:1} is equivalent to  
     
    \begin{equation}\label{equation:2}
        \sum_{0\leq i\leq n}(a_i)\ev x^i=a''\in A \setminus \p.
    \end{equation}
     
     If $n=1$, we are done by \eqref{equation:2}, since in such a case, $(a_1)\ev x=a''-(a_0)\ev\in A \setminus \p$. If $n>1$, we proceed as in the purely even case \cite[Proposition 1.11]{IB} to find a contradiction.
\end{proof}
 
Since maximal pairs are just valuation pairs, we will always refer them by the latter name.
\medskip

To introduce some illustrative examples, we first define polynomial superrings. 

\begin{definition}
    Let $\K$ be a field. We then consider the following ring

\begin{equation}\label{eqn:kalg}
    R   =   \K[X_1, \ldots, X_s \mid  \theta_1, \ldots, \theta_d]  
       :=   \K\langle Z_1, \ldots, Z_s, Y_1, \ldots, Y_d\rangle/(Y_iY_j+Y_jY_i, Z_iZ_j-Z_jZ_i, Z_iY_j-Y_jZ_i).
\end{equation}

This $\K$-superalgebra is called the \textit{polynomial superalgebra over} $\K$, with \textit{even indeterminants} $X_i$'s and \textit{odd indeterminants} $\theta_i$'s. The element $\theta_i$ corresponds to the image of $Y_i$ and $X_j$ corresponds to the image of $Z_j$ in the quotient (\ref{eqn:kalg}). The superreduced of $R=\K[X_1, \ldots, X_s \mid  \theta_1, \ldots, \theta_d]$ is $\overline{R}\cong\K[X_1, \ldots, X_s]$ and it is not hard to check that any element of $R$ can be written in the form 

\vspace{0.2cm}

\[
f=f_{i_0}(X_1, \ldots, X_s)+\sum_{J\,:\,\text{even}}f_{i_1\cdots i_J}(X_1, \ldots, X_s)\theta_{i_1}\cdots\theta_{i_J}+\sum_{J\,:\,\text{odd}}f_{i_1\cdots i_J}(X_1, \ldots, X_s)\theta_{i_1}\cdots\theta_{i_J},
\]

\vspace{0.2cm}

\noindent where $f_{i_0}, f_{i_1\cdots i_J}\in\K[X_1, \ldots, X_s]$ for all $J$. 
\end{definition}

\begin{example}\label{Example:3.10} Let $S=Q(R):=R_{\J_R}$ be the superfield of fractions of $R=\mathbb{C}[X]_\m[\,\theta_1, \ldots, \theta_N]$, where $\m$ is a maximal ideal of $\mathbb{C}[X]$. Note that $\overline{R}=\mathbb{C}[X]_\m$ is a discrete valuation ring, so take on its field of fractions a discrete valuation $v_0$ whose valuation ring is $\overline{R}$. Consider the valuation $v:S\to\mathbb{Z}_{\infty},    x\mapsto v_0(\overline{x})$. Note that $R\neq A_v$ even when clearly $R\subset A_v$. Indeed, $A_v$ includes more elements than $R$. For example, the  odd part of $S$ is inside $A_v$, but definitively not in $R$. Now, $R$ is not integrally closed, unless it were trivial (cf. \cite[Remark 5.12]{RTT}), while the ring associated to the valuation $v$ must be, as we shall see in \Cref{Proposition:4.12}. Although $R$ is a discrete valuation superring, there exists no discrete valuation in $S$ whose valuation superring is $R$.
\end{example}

As illustrated by \Cref{Example:3.10}, the valuation ring of a discrete valuation fails to be a discrete valuation superring in the sense  \cite{RTT}. To clearly distinguish these valuation rings, we use the term \textit{realizable discrete valuation superrings} specifically for them.

\begin{example}

Let $z$ be a transcendental element over $\mathbb{C}$, $\mathbb{C}[z]$ the ring of polynomials in the variable $z$, and $\mathbb{C}(z)$ its field of fractions. For any rational function $h = \frac{f}{g} \in \mathbb{C}(z)$ and an element $z_0 \in \overline{\mathbb{C}} := \mathbb{C} \cup \{\infty\}$, we define $o(h,z_0)$ as the integer associated to the order of the zero or pole of $h$ at $z_0$, that is, the smallest integer $m$ such that
\[
h = (z - z_0)^m \frac{f_0}{g_0},
\]
where $f_0, g_0 \in \mathbb{C}[z]$ and $f_0(z_0) \neq 0$, $g_0(z_0) \neq 0$. Thus, we have a valuation 
\[
v_{z_0} : \mathbb{C}(z) \longrightarrow \mathbb{Z}_\infty
\]
\[
h \mapsto o(h,z_0)
\]
Now, if $R=\mathbb{C}(z)[\,\theta_1, \ldots, \theta_N]$, we obtain a valuation on $R$, given by $v:R\mapsto\mathbb{Z}_{\infty},
    f\mapsto v_{z_0}(\overline{f})$. Since $\mathbb{C}(z)$ is a field, then $v^{-1}( \infty)=\J_R$.  
\end{example}

\subsection{The support of a valuation}

\begin{definition}
    The \textit{support} of $v$ is the set $\Y(v):=v^{-1}( \infty)$, and the \textit{center of $v$} is $\p_v$.\index{valuation!support}
\end{definition}  

The following proposition can be readily deduced from the definition. 

\begin{proposition}\label{Proposition:3.7} \it Let $R$ be a superring and $v$ a valuation on $R$.

\begin{itemize} 
    \item[\rm i)]  $\Y(v)$ is a prime ideal of $R$.
    \item[\rm ii)]  $\J_R$  is a subset of $\Y(v)$.
    \item[\rm iii)] If $A_v\neq R$ and $\a$ is a superideal of $R$ with $\a\subseteq A_v$, then $\a\subseteq\Y(v)$. 
    \item[\rm iv)] If $v$ is nontrivial,  then $\Y(v)=\bigcap\limits_{\alpha\in\mathbb{G}}\{x\in R\mid v(x)\geq\alpha\}$.\qed 
\end{itemize}

\end{proposition}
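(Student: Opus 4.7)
The plan is to dispatch the four items in order, with only item (iii) requiring any real work. For (i), I would verify that $\Y(v)$ is a proper ideal by checking closure under addition through the ultrametric inequality $v(x+y) \geq \min\{v(x), v(y)\}$ and closure under $R$-multiplication via $v(rx) = v(r) + v(x) = \infty$; properness follows from $v(1) = 0 \neq \infty$. Since the definition of prime ideal in \Cref{rmk:prime-ideal} amounts to $R/\Y(v)$ being an integral domain, it then suffices to observe that if $v(xy) = v(x) + v(y) = \infty$, then at least one of $v(x), v(y)$ must already be $\infty$, because the sum of two elements of $\mathbb{G}$ stays in the group $\mathbb{G}$.

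Item (ii) is immediate from \Cref{P:v.sends.J.to.infinity}, and item (iv) follows from two elementary observations: the inclusion $\Y(v) \subseteq \bigcap_{\alpha} \{x : v(x) \geq \alpha\}$ is a direct consequence of the convention $\infty > \alpha$ for all $\alpha \in \mathbb{G}$, while the reverse inclusion uses the nontriviality of $v$: if $v(x) = \beta \in \mathbb{G}$, then choosing any strictly positive $\gamma \in \mathbb{G}$ (which exists because $\mathbb{G}$ is a nonzero totally ordered abelian group) and setting $\alpha = \beta + \gamma$ produces an $\alpha$ for which $v(x) < \alpha$, so $x$ escapes the intersection.

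The main point is (iii). Given a superideal $\a \subseteq A_v$ with $A_v \neq R$, I would argue by contradiction: suppose some $x \in \a$ has $v(x) = \alpha \in \mathbb{G}$, i.e., $x \notin \Y(v)$. Surjectivity of $v$ (which is part of \Cref{Def:of:valuation}) yields a $y \in R$ with $v(y) = -\alpha$, and since $\a$ is an ideal, the product $xyr$ lies in $\a \subseteq A_v$ for every $r \in R$, so $v(xyr) \geq 0$. But $v(xyr) = \alpha + (-\alpha) + v(r) = v(r)$, forcing $v(r) \geq 0$ and hence $r \in A_v$. Thus $R = A_v$, contradicting the hypothesis. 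The subtlety to watch out for is that this argument genuinely relies on the Manis surjectivity hypothesis: without it, there would be no candidate $y \in R$ satisfying $v(y) = -v(x)$, and the multiplicative cancellation trick would collapse.
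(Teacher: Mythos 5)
Your proof is correct, and since the paper omits the argument entirely (stating only that the proposition "can be readily deduced from the definition"), your write-up supplies exactly the routine verification the authors leave to the reader: the group-theoretic fact that a sum of two elements of $\mathbb{G}$ cannot equal $\infty$ for primality, the appeal to \Cref{P:v.sends.J.to.infinity} for ii), the existence of a positive element in a nonzero ordered group for iv), and the surjectivity-based cancellation trick for iii). Your remark that iii) genuinely depends on the Manis surjectivity hypothesis is accurate and worth keeping.
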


As a consequence of \Cref{Proposition:3.7} i), given that $\Y(v)$ is a prime ideal of $R$, $R/\Y(v)$ is an integral domain. We denote its field of fractions by $k(\Y(v))$. Consider the map $$\hat{v}: k(\Y(v)) \to\mathbb{G}\cup\{ \infty\}, x/y\mapsto\overline{v}(xy'),$$ where $\overline{v}$ is the map $$\overline{v}:R/\Y(v)\to\mathbb{G}\cup\{ \infty\}, x+\Y(v)\mapsto v(x),$$ and $y'$ is such that $\overline{v}(yy')=0$.
\medskip

A straightforward computation demonstrates the following proposition.

\begin{proposition}\label{vhat:vline:are:valuations}
\it  The maps $\overline{v}$ and $\hat{v}$ are valuations. \qed 
\end{proposition}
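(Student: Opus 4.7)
The proof splits cleanly into two parts: showing $\overline{v}$ is a valuation on $R/\Y(v)$, and then bootstrapping to $\hat{v}$ on its field of fractions $k(\Y(v))$.

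For $\overline{v}$, the main point is well-definedness: if $x+\Y(v)=y+\Y(v)$, then $x-y\in\Y(v)$, so $v(x-y)=\infty$. Using axiom iii) of \Cref{Def:of:valuation}, $v(x)=v(y+(x-y))\geq \min\{v(y),\infty\}=v(y)$, and symmetrically, so $v(x)=v(y)$. Once well-defined, the three valuation axioms are immediate transfers: $\overline{v}(\overline{0})=v(0)=\infty$, $\overline{v}(\overline{1})=v(1)=0$, $\overline{v}(\overline{x}\,\overline{y})=v(xy)=v(x)+v(y)$, and $\overline{v}(\overline{x}+\overline{y})=v(x+y)\geq\min\{v(x),v(y)\}$. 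Surjectivity of $\overline{v}$ follows directly from surjectivity of $v$.

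For $\hat{v}$, the core step is to verify well-definedness. Given $\overline{x}/\overline{y}\in k(\Y(v))$ with $\overline{y}\neq 0$ (hence $\overline{v}(\overline{y})\neq\infty$), surjectivity of $\overline{v}$ produces some $y'$ with $\overline{v}(y')=-\overline{v}(\overline{y})$, so $\overline{v}(\overline{y}y')=0$. If $y'_1,y'_2$ both satisfy this, then $\overline{v}(\overline{x}y'_1)=\overline{v}(\overline{x})+\overline{v}(y'_1)=\overline{v}(\overline{x})+\overline{v}(y'_2)=\overline{v}(\overline{x}y'_2)$, so the definition is independent of $y'$. Independence of the representative: if $\overline{x_1}/\overline{y_1}=\overline{x_2}/\overline{y_2}$, then $\overline{x_1}\,\overline{y_2}=\overline{x_2}\,\overline{y_1}$ in the domain $R/\Y(v)$, whence $\overline{v}(\overline{x_1})+\overline{v}(\overline{y_2})=\overline{v}(\overline{x_2})+\overline{v}(\overline{y_1})$, and rearranging gives $\overline{v}(\overline{x_1})-\overline{v}(\overline{y_1})=\overline{v}(\overline{x_2})-\overline{v}(\overline{y_2})$, matching the two computations of $\hat{v}$ on both sides. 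One should handle the edge case $\overline{x}=0$ separately, giving $\hat{v}(0/\overline{y})=\infty$.

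The valuation axioms for $\hat{v}$ then reduce to routine arithmetic once one recognizes $\hat{v}(\overline{x}/\overline{y})=\overline{v}(\overline{x})-\overline{v}(\overline{y})$ (with the convention $\infty-\alpha=\infty$). Multiplicativity is immediate. For the ultrametric inequality, compute
\[
\hat{v}\!\left(\tfrac{\overline{x_1}}{\overline{y_1}}+\tfrac{\overline{x_2}}{\overline{y_2}}\right)=\overline{v}(\overline{x_1}\,\overline{y_2}+\overline{x_2}\,\overline{y_1})-\overline{v}(\overline{y_1})-\overline{v}(\overline{y_2})\geq \min\{\hat{v}(\overline{x_1}/\overline{y_1}),\hat{v}(\overline{x_2}/\overline{y_2})\},
\]
using the ultrametric property of $\overline{v}$ established above. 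Surjectivity of $\hat{v}$ follows from that of $\overline{v}$, since the image already contains $\mathbb{G}\cup\{\infty\}$. The only mild obstacle is the bookkeeping in verifying that $\hat{v}$ is independent of both $y'$ and the chosen representative; everything else is a direct transcription of the corresponding commutative argument, aided by the fact (\Cref{rmk:prime-ideal} v)) that $\Y(v)$ has the form $\Y(v)\ev\oplus R\od$, so $R/\Y(v)$ is an honest commutative domain and the field of fractions requires no super-theoretic refinement.
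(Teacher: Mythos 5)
Your argument is correct and is exactly the ``straightforward computation'' that the paper leaves to the reader: well-definedness of $\overline{v}$ via the ultrametric inequality applied to $x=y+(x-y)$ with $x-y\in\Y(v)$, then well-definedness of $\hat{v}$ (existence and irrelevance of the choice of $y'$, independence of the representative using that $R/\Y(v)$ is a domain), and the transfer of the three axioms. No discrepancy with the paper's (omitted) proof.
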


Classical valuation theory for fields implies that $\mathfrak{o}_v:=A_{\hat{v}}$ is a local ring with maximal ideal $\m_v:=\p_{\hat{v}}$. We denote the residue field $\mathfrak{o}_v/\m_v$ by $k(v)$.  From the definition (and \Cref{vhat:vline:are:valuations}), we find that if $\mathbb{G}_+$ is the set of positive elements in $\mathbb{G}$, then $\hat{v}(\mathfrak{o}_v)=\mathbb{G}_+\cup\{ \infty\}$. 

\begin{proposition}\label{prop.2.15} \it Let $v$ and $w$ be valuations on $R$. The following conditions are equivalent. 

\begin{itemize}
    \item[\rm i)] $v\sim w$.
    \item[\rm ii)] $v(x)\geq v(y)$ if and only if $w(x)\geq w(y)$, for all $x, y\in R$.
    \item[\rm iii)] $\Y(v)=\Y(w)$ and $\mathfrak{o}_v=\mathfrak{o}_w$.
\end{itemize}
\end{proposition}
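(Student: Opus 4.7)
The plan is to prove the equivalence cyclically: (i) $\Rightarrow$ (ii) $\Rightarrow$ (iii) $\Rightarrow$ (i). The implication (i) $\Rightarrow$ (ii) is immediate from the definition recalled in \Cref{REMAKR:6.1.6}: if $h\colon\mathbb{G}\to\mathbb{H}$ is the order isomorphism with $h\circ v = w$, then order preservation on both sides gives the claimed equivalence of inequalities (with the convention $h(\infty)=\infty$ taking care of the support).

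For (ii) $\Rightarrow$ (iii), I would first observe that $x\in\Y(v)$ iff $v(x)=\infty$ iff $v(x)\geq v(y)$ for every $y\in R$; by (ii) this happens iff $w(x)\geq w(y)$ for every $y\in R$, i.e., iff $x\in\Y(w)$. Hence $\Y(v)=\Y(w)$, so in particular $k(\Y(v))=k(\Y(w))$. To compare $\mathfrak{o}_v$ and $\mathfrak{o}_w$, I would use the identity $\hat v(x/y)=\overline v(x)-\overline v(y)=v(x)-v(y)$ valid whenever $y\notin \Y(v)$ (which follows from the definition of $\hat v$ via the auxiliary element $y'$ with $\overline v(yy')=0$). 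Thus $x/y\in \mathfrak{o}_v$ iff $\hat v(x/y)\geq 0$ iff $v(x)\geq v(y)$, and likewise for $w$; applying (ii) yields $\mathfrak{o}_v=\mathfrak{o}_w$.

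For (iii) $\Rightarrow$ (i), having $\Y(v)=\Y(w)$ puts $\hat v$ and $\hat w$ on the \emph{same} field $F:=k(\Y(v))=k(\Y(w))$, and by hypothesis their valuation rings coincide: $A_{\hat v}=\mathfrak{o}_v=\mathfrak{o}_w=A_{\hat w}$. The classical valuation theory for fields (see e.g.\ \cite[Ch.~VI]{Bou} or \cite[p.~34]{SZ}) then guarantees that $\hat v$ and $\hat w$ are equivalent valuations on $F$, producing an order isomorphism $h\colon\mathbb{G}\to\mathbb{H}$ with $h\circ\hat v=\hat w$. Extending $h$ by $h(\infty)=\infty$, I would check on $R$ as follows: if $x\in\Y(v)=\Y(w)$ then $h(v(x))=h(\infty)=\infty=w(x)$, while if $x\notin\Y(v)$ then $v(x)=\hat v(x/1)$ and $w(x)=\hat w(x/1)$, so $h(v(x))=h(\hat v(x/1))=\hat w(x/1)=w(x)$. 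This establishes $h\circ v=w$ and hence $v\sim w$.

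The only genuinely non-routine point is the passage (iii) $\Rightarrow$ (i), where one must invoke the field-theoretic fact that coincidence of valuation rings forces equivalence of valuations, and then propagate the isomorphism back from $F$ to $R$ through $\hat v$ and $\overline v$. Everything else reduces to unwinding definitions and the translation-invariance of the order on $\mathbb{G}_\infty$.
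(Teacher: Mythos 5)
Your proof is correct. The overall skeleton (the cycle i) $\Rightarrow$ ii) $\Rightarrow$ iii) $\Rightarrow$ i)) matches the paper, but the key implication iii) $\Rightarrow$ i) is handled differently. The paper constructs $h$ directly on $\mathbb{G}$: for each $\alpha\in\mathbb{G}$ it picks $x\in R\setminus\Y(v)$ with $v(x)=\alpha$ and sets $h(\alpha):=w(x)$, leaving implicit the verification that this is well defined and order preserving (which is exactly where the hypothesis $\mathfrak{o}_v=\mathfrak{o}_w$ is needed: $v(x)=v(y)$ means $x/y$ and $y/x$ both lie in $\mathfrak{o}_v=\mathfrak{o}_w$, whence $w(x)=w(y)$, and similarly for the order). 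You instead pass to the common field $F=k(\Y(v))=k(\Y(w))$, invoke the classical fact that valuations on a field with the same valuation ring are equivalent to obtain $h$ with $h\circ\hat v=\hat w$, and then pull $h$ back to $R$ via $v(x)=\hat v(x/1)$. The two routes are mathematically the same construction; yours has the advantage of making the role of $\mathfrak{o}_v=\mathfrak{o}_w$ explicit by delegating it to the field-theoretic theorem, at the cost of the (correctly handled) bookkeeping of transporting $h$ back from $F$ to $R$. Your detailed treatment of ii) $\Rightarrow$ iii), which the paper dismisses as ``direct,'' is also sound; note that the cleanest way to see that $v(x)\geq v(y)$ for all $y$ forces $v(x)=\infty$ is simply to take $y=0$.
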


\begin{proof} The implications i) $\Rightarrow$ ii) and ii) $\Rightarrow$ iii) are direct. To show  iii) $\Rightarrow$ i), we must construct an order isomorphism $h:\mathbb{G}\to\mathbb{H}$ such that $w=h\circ v$. Let $\alpha\in\mathbb{G}$. Then, there exists $x\in R \setminus \Y(v)$ such that $v(x)=\alpha$. Hence, $h:\mathbb{G}\to\mathbb{H}; \alpha\mapsto w(x)$ is the required  order isomorphism. 
\end{proof}

\begin{proposition}
    $k(\Y(v))=(R/\Y(v))\cdot\mathfrak{o}_v^*$, where $\mathfrak{o}_v^*=\mathfrak{o}_v\setminus\m_v$.
\end{proposition}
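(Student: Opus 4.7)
The inclusion $(R/\Y(v)) \cdot \mathfrak{o}_v^* \subseteq k(\Y(v))$ is automatic, since $R/\Y(v)$ embeds in its field of fractions $k(\Y(v))$ and $\mathfrak{o}_v^* \subseteq \mathfrak{o}_v \subseteq k(\Y(v))$. So the content is the reverse inclusion: every $\alpha \in k(\Y(v))$ can be written as $\bar{r}\cdot u$ with $\bar{r} \in R/\Y(v)$ and $u \in \mathfrak{o}_v \setminus \m_v$.

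My plan is to exploit the surjectivity built into the definition of a valuation (\Cref{Def:of:valuation}). Take $\alpha \in k(\Y(v))$; the case $\alpha = 0$ is trivial ($0 = 0 \cdot 1$), so assume $\alpha \neq 0$. Then $\hat{v}(\alpha) \in \mathbb{G}$ is \emph{finite}, because $\Y(\hat{v})$ is the zero ideal of the field $k(\Y(v))$. Set $\beta := \hat{v}(\alpha) \in \mathbb{G}$. Since $v: R \to \mathbb{G}\cup\{\infty\}$ is surjective, choose $r \in R$ with $v(r) = \beta$; as $\beta \neq \infty$, we have $r \notin \Y(v)$, so $\bar{r} \in R/\Y(v)$ is nonzero and thus invertible in $k(\Y(v))$.

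Now put $u := \alpha/\bar{r} \in k(\Y(v))$. Using that $\hat{v}$ is a valuation (\Cref{vhat:vline:are:valuations}) and that $\hat{v}$ extends $\bar{v}$ (which in turn agrees with $v$ through the quotient $R \to R/\Y(v)$), we get
\[
\hat{v}(u) \;=\; \hat{v}(\alpha) - \hat{v}(\bar{r}) \;=\; \beta - v(r) \;=\; 0.
\]
Therefore $u \in \mathfrak{o}_v$ and $u \notin \m_v$, i.e.\ $u \in \mathfrak{o}_v^*$, and the decomposition $\alpha = \bar{r}\cdot u$ exhibits $\alpha$ as an element of $(R/\Y(v))\cdot \mathfrak{o}_v^*$.

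There is no real obstacle here; the only point requiring care is verifying that the element $r$ produced by surjectivity lies outside $\Y(v)$, which is immediate from the fact that $\beta$ is finite, together with the observation that $\hat{v}$ assigns $\infty$ only to $0 \in k(\Y(v))$. Everything else is a direct computation inside the ordered group $\mathbb{G}$ using the valuation axioms already available.
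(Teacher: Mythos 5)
Your proof is correct, and since the paper's own proof is just the words ``routine verification,'' your write-up supplies exactly the natural argument being alluded to: the inclusion $(R/\Y(v))\cdot\mathfrak{o}_v^*\subseteq k(\Y(v))$ is trivial, and the reverse inclusion follows from surjectivity of $v$, the identity $\hat{v}(\bar r)=v(r)$, and the fact that $\hat{v}$ takes the value $\infty$ only at $0$ in the field $k(\Y(v))$. No gaps.
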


\begin{proof} 
Routine verification. 
\end{proof}

Note that although $(\mathfrak{o}_v, \m_v)$ is always local, this is not necessarily true for $(A_v, \p_v)$. This is false even for the purely even case. For example, if $R=\Z[x, x^{-1}]$, then $(A, \p)=(\Z[x], x\Z[x]$) is a valuation pair, but $\p$ is properly contained in $\q=\p+2\Z$, which is a prime ideal, so $\p$ is not maximal \cite[p.161]{IB2}.   

\begin{definition}
    We say that $v$ is \textit{local}\index{valuation!local} if $\p_v$ is the only maximal ideal of $A_v$.
\end{definition}

\begin{proposition}\label{equiv:v:local}
\it The following conditions are equivalent. 
 
    \begin{itemize} 
        \item[\rm i)] $v$ is local.
        \item[\rm ii)] $(R, \Y(v))$ is local. 
        \item[\rm iii)] $v$ is local and $\Y(v)$ is maximal in $R$. 
    \end{itemize}
\end{proposition}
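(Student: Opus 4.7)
My strategy is to prove the cyclic chain i) $\Rightarrow$ ii) $\Rightarrow$ iii) $\Rightarrow$ i). The implication iii) $\Rightarrow$ i) is contained in the statement of iii), so the real work is i) $\Rightarrow$ ii), and then ii) $\Rightarrow$ iii) is a short consequence. The decisive ingredient throughout is the surjectivity built into \Cref{Def:of:valuation}.

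For i) $\Rightarrow$ ii), I would first reduce the goal to showing that every $x \in R \setminus \Y(v)$ is a unit of $R$. By \Cref{P:v.sends.J.to.infinity}, $\Y(v)$ is a superideal of the form $\Y(v)\ev \oplus R\od$, and by \Cref{rmk:prime-ideal} every maximal ideal of $R$ has the form $\m\ev \oplus R\od$ with $\m\ev$ maximal in $R\ev$. Consequently, if every $x \in R\ev \setminus \Y(v)\ev$ is invertible in $R\ev$, then $\Y(v)\ev$ is the unique maximal ideal of $R\ev$, which gives ii).

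The key computation is the following. Given $x \in R \setminus \Y(v)$ with even part $x_0$, we have $v(x) = v(x_0) \in \mathbb{G}$. By surjectivity of $v$, choose $z \in R$ with $v(z) = -v(x_0)$, so $v(xz) = 0$ and $xz \in A_v \setminus \p_v$. Writing $xz = y_0 + y_1$ with $y_i \in R_i$, \Cref{P:v.sends.J.to.infinity} forces $v(y_0) = 0$, so $y_0 \in (A_v)\ev \setminus \p_v\ev$. Under hypothesis i), $\p_v\ev$ is the unique maximal ideal of the local ring $(A_v)\ev$, hence $y_0$ is a unit there. Since $y_1^2 = 0$, the element $1 + y_0^{-1} y_1$ is inverted by $1 - y_0^{-1} y_1$, so $xz = y_0(1 + y_0^{-1} y_1)$ is invertible in $A_v \subseteq R$, and $x^{-1} = z(xz)^{-1}$ exists in $R$.

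For ii) $\Rightarrow$ iii), the maximality of $\Y(v)$ in $R$ is immediate from ii). To see $v$ is local, take $x \in A_v \setminus \p_v$; then $v(x) = 0$, so $x \notin \Y(v)$, and ii) provides $x^{-1} \in R$, while $v(x^{-1}) = -v(x) = 0 \geq 0$ forces $x^{-1} \in A_v$. Hence $\p_v$ is the unique maximal ideal of $A_v$. No serious obstacle arises; the argument is essentially routine once one recognizes that the surjectivity of $v$ is precisely what transfers locality from $A_v$ to $R$. The only point demanding some care is the super-inverse step, where one uses the nilpotence of odd elements to promote an even unit of $(A_v)\ev$ to a unit of $A_v$.
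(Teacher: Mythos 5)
Your proof is correct and follows the same route the paper intends: the paper's proof is just a citation to the purely even case (Knebusch--Zhang), and your argument is exactly that classical one — use surjectivity of $v$ to produce $z$ with $v(xz)=0$ and then invoke locality of $A_v$ — supplemented with the correct super-specific reductions (odd elements have value $\infty$, maximal ideals are determined by their even parts, and the nilpotence trick $(1+y_0^{-1}y_1)^{-1}=1-y_0^{-1}y_1$). No gaps worth noting.
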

 
\begin{proof} The same as in the purely even case \cite[Chapter I, Proposition 1.3]{KZ}. 
\end{proof}

Let $\U\subseteq R\ev$ be a multiplicative set with $\U\cap\Y(v)=\emptyset$. Consider $\U^{-1}R$ the localization superring of $R$ by $U$. The map ${\scriptstyle\U^{-1}}v:\U^{-1}R\to\mathbb{G}\cup\{ \infty\}$, defined by ${\scriptstyle\U^{-1}}v(x/s):=v(x)-v(s)$, for all $x/s\,\in\,\U^{-1}R$ is a valuation. In the special case where $\U=A \setminus \p$, for a valuation pair $(A,\p)$ induced for some valuation $(v, \mathbb{G})$, we have $({\scriptstyle\U^{-1}}v)(x/s)=v(x)$ for all $x/s\,\in\,\U^{-1}R$ and $A_{\U^{-1}v}=\U^{-1}A_v=A_\p$ and $\p_{{\scriptstyle\U^{-1}}v}=\U^{-1}\p_v=\p_\p.$ In particular, ${\scriptstyle\U^{-1}}v$ is local. We call $\widetilde{v}:={\scriptstyle\U^{-1}}v$ the \textit{localization} of $v$ at $\U$.

\begin{proposition}
\it The superideal $\Y(v)$ is the only maximal ideal of $R$ among all other superideals of $R$ not meeting $A \setminus \p$.
\end{proposition}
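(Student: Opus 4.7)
The plan is to read the statement as asserting that $\Y(v)$ is the unique maximum element, under inclusion, of the family $\mathcal{F}$ of superideals of $R$ disjoint from $A_v \setminus \p_v$. This splits cleanly into two parts: (a) $\Y(v)$ itself belongs to $\mathcal{F}$, and (b) every other member of $\mathcal{F}$ sits inside $\Y(v)$. Part (a) is immediate from what has already been done: $\Y(v)$ is a prime ideal of $R$ by \Cref{Proposition:3.7}(i), hence a superideal by \Cref{rmk:prime-ideal}(v), and it cannot meet $A_v \setminus \p_v = \{x \in R : v(x) = 0\}$ because its elements all have $v$-value $\infty$.

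The substantive step is (b). Given $\a \in \mathcal{F}$, I would take a homogeneous $x \in \a$ and argue by contradiction that $v(x) = \infty$. If $x$ is odd, then $v(x) = \infty$ comes for free from \Cref{P:v.sends.J.to.infinity}. So assume $x$ is even with $v(x) = \alpha \in \mathbb{G}$. Here the surjectivity hypothesis built into \Cref{Def:of:valuation} is the key tool: there exists $y \in R$ with $v(y) = -\alpha$, and decomposing $y = y\ev + y\od$ and using $v(y\od) = \infty$ we may replace $y$ by its even part, so $y$ can be taken homogeneous of even parity. Then $xy \in \a$ (since $\a$ is an ideal) and $v(xy) = v(x) + v(y) = 0$, so $xy \in A_v \setminus \p_v$, contradicting $\a \cap (A_v \setminus \p_v) = \emptyset$. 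Hence every homogeneous element of $\a$ lies in $\Y(v)$, and since $\a$ is a superideal, $\a = \a\ev \oplus \a\od \subseteq \Y(v)$.

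Combining (a) and (b) makes $\Y(v)$ the unique maximum of $\mathcal{F}$, which in particular is its only maximal element. There is no real obstacle in the argument; the only point demanding care is the step where one uses that $v$ is required to be surjective onto $\mathbb{G}$ (the ``Manis'' condition recalled in Remark following \Cref{Def:of:valuation}) to produce the inverse-value witness $y$, and the minor bookkeeping of restricting $y$ to its even component so that parity considerations do not interfere.
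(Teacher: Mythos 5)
Your argument is correct, but it is not the route the paper takes. The paper's proof is a one-liner: it invokes the localization $\widetilde{v}={\scriptstyle\U^{-1}}v$ at $\U=A_v\setminus\p_v$, observes that $\widetilde{v}$ is local, and then appeals to \Cref{equiv:v:local}, so that $\Y(\widetilde{v})=\U^{-1}\Y(v)$ is the unique maximal ideal of $\U^{-1}R$ and the ideal correspondence under localization transfers this back to $R$. You instead give a direct, element-wise argument: for a superideal $\a$ disjoint from $A_v\setminus\p_v$ and a homogeneous even $x\in\a$ with $v(x)=\alpha\neq\infty$, surjectivity of $v$ produces an even $y$ with $v(y)=-\alpha$, whence $xy\in\a\cap(A_v\setminus\p_v)$, a contradiction; the odd components are handled by \Cref{P:v.sends.J.to.infinity} and the superideal decomposition closes the argument. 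Both proofs are sound. The paper's version buys brevity by leaning on machinery already set up (the localization of a valuation and the local characterization), while yours is self-contained, makes the role of the Manis surjectivity condition explicit, and actually establishes the slightly stronger statement that $\Y(v)$ is the unique \emph{maximum} (not merely the unique maximal element) of the family of superideals disjoint from $A_v\setminus\p_v$ --- which is also what the localization argument yields, but your write-up makes this explicit. The only cosmetic remark is that your reduction of $y$ to its even part deserves the one-line justification that $v(y)=\min\{v(y\ev),v(y\od)\}=v(y\ev)$ because $v(y\od)=\infty$ while $v(y)\neq\infty$; you gesture at this and it is indeed immediate.
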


\begin{proof}
    Since $\tilde{v}$ is local, the proof follows from  \Cref{equiv:v:local}.
\end{proof} 

\subsection{
$v$-convex superideals
}\label{SEC:3}   When working with valuation domains an important property of these is that their ideals can be totally ordered by inclusion. When extend to valuation rings this property is lost, but one can restrict oneself to $v$-convex ideals, where $v$ is the underlying valuation and here we do have a total order under inclusion. In this subsection we show that in the realm of superrings this property is preserved, and furthermore, such ideals turn out to be $\Z_2$-graded. Then we stablish some interesting properties of these superideals.

\begin{definition}
    Let $R$ be a superring and $v$ a valuation on $R$. A $v$-\textit{convex ideal} $\a$ of $A_v$ is an ideal such that $x\in \a \text{ and } v(y)\geq v(x) \text{ implies } y\in\a.$ 
\end{definition}

Manis (\cite{Man}) initially employed the term ``$v$-closed ideal'' to denote the aforementioned objects. However, in the current days, the term ``$v$-convex'' is more frequently employed (see e.g., \cite{KnK} and \cite{KZ}).

\begin{proposition}
      Any $v$-convex ideal is $\Z_2$-graded.  Furthermore, the set of $v$-convex ideals is totally ordered by inclusion. 
\end{proposition}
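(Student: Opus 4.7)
The plan is to prove both assertions by direct appeal to the defining inequality of $v$-convexity, using \Cref{P:v.sends.J.to.infinity} as the crucial ingredient that makes the super-setting work.

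For the first assertion, I would take a $v$-convex ideal $\a \ideal A_v$ and an arbitrary $x \in \a$, write $x = x\ev + x\od$ with $x_i \in (A_v)_i$, and show separately that each component lies in $\a$. The odd component $x\od$ belongs to $\J_R$, so by \Cref{P:v.sends.J.to.infinity} we have $v(x\od) = \infty \geq v(x)$; $v$-convexity then forces $x\od \in \a$. Once $x\od \in \a$, the even component $x\ev = x - x\od$ lies in $\a$ as well. Hence $\a = (\a \cap R\ev) \oplus (\a \cap R\od)$, so $\a$ is $\Z_2$-graded.

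For the second assertion, I would let $\a,\b$ be two $v$-convex ideals of $A_v$ and suppose, for contradiction, that neither is contained in the other; then there exist $x \in \a \setminus \b$ and $y \in \b \setminus \a$. Since the value group $\mathbb{G}_\infty$ is totally ordered, either $v(x) \geq v(y)$ or $v(y) > v(x)$. In the first case, $v$-convexity of $\b$ (applied with the witness $y \in \b$) gives $x \in \b$, a contradiction; in the second case, $v$-convexity of $\a$ (applied with the witness $x \in \a$) gives $y \in \a$, again a contradiction. So the $v$-convex ideals are totally ordered by inclusion.

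There is no real obstacle here: the only place where the super-structure could have caused trouble is in the $\Z_2$-graded claim, but it is precisely defused by \Cref{P:v.sends.J.to.infinity}, which forces odd elements to sit at the top of the value-ordering and thus to be absorbed by any $v$-convex ideal. The total-order argument is then identical to the purely even case and uses only the totality of the order on $\mathbb{G}_\infty$.
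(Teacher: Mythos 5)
Your proof is correct and follows essentially the same route as the paper: the graded claim is handled by noting that odd elements have value $\infty$ (so any $v$-convex ideal absorbs the odd component, and hence the even one), and the total-order claim is the standard comparison of values, which the paper phrases as a direct containment argument rather than your two-element contradiction, but the reasoning is identical.
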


\begin{proof} 
    Let $\a$ be a $v$-convex ideal and consider $x\in\a$. Note that $v(x\od)=v(-x\od)= \infty\geq v(x)$, then $x\od, -x\od\in\a$, and  $x\ev=x-x\od\in\a$. Hence, $\a=\a\ev\oplus\a\od$, and any $v$-convex ideal $\a$ is a superideal. Finally, let $\a, \b$ be $v$-convex superideals of $A_v$. Suppose that there is some $x\in\b-\a$, i.e., $\b\not\subseteq\a$. Then, $v(x)<v(y)$ for all $y\in\a$. Thus,  $y\in\b$. In other words, if $\b-\a$ is nonempty, then any element of $\a$ belongs to $\b$, i.e., $\a\subseteq\b$. This completes the proof of the proposition.
\end{proof}

\begin{proposition}\label{Proposition:3.14} \it Let $(v, \mathbb{G})$ be a valuation on $R$. Then the following conditions hold. 

    \begin{itemize} 
        \item[\rm i)] There is a one to one order reversing correspondence between the set of $v$-convex superideals of $A_v$ not contained in $\Y(v)$ and the proper segments of $\mathbb{G}$.
        \item[\rm ii)] The $v$-convex prime superideals of $A_v$ not contained in $\Y(v)$ are in one to one order reversing correspondence with the proper isolated subgroups of $\mathbb{G}$.
        \item[\rm iii)] The $v$-convex prime  ideals of $A_v$ are those prime ideals $\a$ of $A_v$ with  $\Y(v)\subseteq\a\subseteq\p_v$.
    \end{itemize}  
\end{proposition}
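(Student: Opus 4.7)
For (i), my plan is to exhibit explicit mutually inverse, order-reversing maps between the two sets. Given a $v$-convex superideal $\a$ of $A_v$ with $\a \not\subseteq \Y(v)$, associate
\[
S(\a) := \{\gamma \in \mathbb{G} : |\gamma| < v(x) \text{ for all } x \in \a\},
\]
where $|\gamma| := \max\{\gamma, -\gamma\}$; conversely, given a proper segment $S$ of $\mathbb{G}$, associate
\[
\a(S) := \{x \in A_v : v(x) \notin S\}.
\]
I would verify that $S(\a)$ is a proper segment (the ``ball'' structure of the defining condition yields the segment property, and properness uses $\a \not\subseteq \Y(v)$ to exhibit some $\gamma = v(x) \in \mathbb{G}$ outside $S(\a)$) and that $\a(S)$ is a proper $v$-convex superideal not contained in $\Y(v)$; for the latter, closure under addition combines $v(x+y) \geq \min\{v(x), v(y)\}$ with the upward-closedness of $(\mathbb{G}_{\geq 0} \cup \{\infty\}) \setminus S$ (which follows from the segment property of $S$), closure under $A_v$-multiplication follows from $v(rx) \geq v(x)$, and $v$-convexity is transparent. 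The identities $\a(S(\a)) = \a$ and $S(\a(S)) = S$ then follow respectively from the $v$-convexity of $\a$ and the surjectivity of $v$; order reversal is immediate.

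For (ii), I would show that $\a$ is a prime superideal iff $S(\a)$ is a subgroup of $\mathbb{G}$. For the forward direction, take $\alpha, \beta \in S(\a)$ and, by surjectivity of $v$, pick $x, y \in A_v$ with $v(x) = |\alpha|$ and $v(y) = |\beta|$. Since $v(\a)$ is upward-closed in $\mathbb{G}_{\geq 0} \cup \{\infty\}$ by $v$-convexity, $v(x), v(y) \notin v(\a)$ entails $x, y \notin \a$; primality of $\a$ gives $xy \notin \a$, hence $v(xy) = |\alpha| + |\beta| < v(z)$ for all $z \in \a$. Since $|\alpha + \beta| \leq |\alpha| + |\beta|$ in the ordered group, $\alpha + \beta \in S(\a)$. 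Closure under negation is clear from $|-\gamma| = |\gamma|$. Conversely, if $S(\a)$ is a subgroup and $xy \in \a$ with $x, y \notin \a$, then $v(x), v(y) \in S(\a)$, hence $v(xy) = v(x) + v(y) \in S(\a)$, which forces the absurdity $v(xy) < v(xy)$.

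For (iii), the forward direction is direct: any $v$-convex ideal always contains $\Y(v)$ (since $0 \in \a$ with $v(0) = \infty$), and a proper $v$-convex ideal is contained in $\p_v$ (otherwise it contains an element of $v$-value $0$ and hence equals $A_v$). The main obstacle is the converse: any prime $\a$ with $\Y(v) \subseteq \a \subseteq \p_v$ is $v$-convex. Given $x \in \a$ and $y \in A_v$ with $v(y) \geq v(x)$, I would first reduce to the case $v(x), v(y) < \infty$ and $x, y \in R\ev$ (absorbing infinite values into $\Y(v) \subseteq \a$, and discarding odd parts using \Cref{P:v.sends.J.to.infinity}, which gives $R\od \subseteq \Y(v)$). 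Then $v(x) > 0$ since $x \in \p_v$, so by surjectivity pick $z \in R\ev$ with $v(z) = -v(x) < 0$; since $z \in R \setminus A_v$, the valuation pair property (\Cref{Def:ValuationPair}) supplies $z' \in (\p_v)\ev$ with $zz' \in A_v \setminus \p_v$. Then $v(zx) = 0$ places $zx \in A_v \setminus \p_v \subseteq A_v \setminus \a$, while $v(zy) \geq 0$ places $zy \in A_v$; supercommutativity of even elements gives $(zx)y = (zy)x \in A_v \cdot \a \subseteq \a$, and primality of $\a$ combined with $zx \notin \a$ forces $y \in \a$.
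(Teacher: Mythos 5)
Your proposal is correct and follows essentially the same route as the paper: your $S(\a)$ coincides with the paper's $\mathbb{G}_\a=\mathbb{G}\setminus(-\a^v\cup\a^v)$ and your $\a(S)$ with its $\a_H$, while for ii) and iii) you simply write out in full the standard Manis-type arguments that the paper delegates to the purely even case in the references. The only cosmetic point is that in iii) the element $z'$ supplied by the valuation-pair property is never actually used, since $v(zx)=0$ already places $zx$ in $A_v\setminus\p_v$.
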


 \begin{proof}The proofs of Conditions ii) and iii) are identical to those presented in the purely even case, as discussed in detail in \cite[Chapter 2, \S1]{IB}. To prove i), let $\a \subseteq A_v$ be a $v$-convex superideal and define the sets 
\begin{align*}
    \a^v & := \{\alpha \in \mathbb{G} \mid \alpha = v(x) \text{ for some } x \in \a  \setminus  \Y(v)\}, \\
    -\a^v & := \{\alpha \in \mathbb{G} \mid -\alpha \in \a^v\}, \\
    \mathbb{G}_\a & := \mathbb{G}  \setminus  (-\a^v \cup \a^v).
\end{align*}
Proceeding as in the proof of \cite[Chapter VI, \S10, Theorem 15]{SZ}, we show that $\mathbb{G}_\a$ is a proper segment of $\mathbb{G}$. 
On the other hand, let $\mathbb{H} \subset \mathbb{G}$ be a proper segment, and consider  
\begin{align*}
    L & = \mathbb{G}_ +  \setminus \mathbb{H}, \\
    \a_H & = L^{v^{-1}} = \{x \in A_v \mid v(x) \in L\}.
\end{align*} 
One can use the fact that $L + \mathbb{G}_ +  \subseteq L$ to show that $\a_H$ is a $v$-convex superideal in $A_v$ (for example, see the proof of \cite[Chapter VI, \S10, Theorem 15]{SZ}). The mappings $\a \mapsto \mathbb{G}_\a$ and $H \mapsto \a_H$ are seen to be order-reversing and mutually inverses, so i) follows.
\end{proof}


\section{Dominance and Extension of valuations}\label{SEC:4}


 \subsection{Dominance} Let $R$ be a superring and let $(v, \mathbb{G})$ and $(w, \mathbb{H})$ be valuations on $R$.

\begin{definition} 
    We say that $w$ \textit{dominates} $v$ if $w=h\circ v$, where $h:\mathbb{G}\to\mathbb{H}$ is an order homomorphism. In such case we write $w\gg v$ or $v\ll w$.\index{domination}  
    
    If there is some nontrivial valuation $u$ dominating $v$ and $w$, we say that $v$ and $w$ are \textit{dependent}. Otherwise we say that $v$ and $w$ are \textit{independent}. \index{valuation!dependent}\index{valuation!independent}
\end{definition}

\begin{proposition}\label{Proposition:3.17}
    \it     $w\gg v$ if and only if $\Y(w)=\Y(v)\subseteq\p_w\subseteq\p_v\subseteq A_v\subseteq A_w$.
\end{proposition}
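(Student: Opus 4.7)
My plan is to prove the two implications separately, with the reverse direction being the substantive one. For the forward implication, suppose $w = h \circ v$ for an order homomorphism $h : \mathbb{G} \to \mathbb{H}$, extended to $\mathbb{G}_\infty \to \mathbb{H}_\infty$ by the convention that $h(\alpha) = \infty$ iff $\alpha = \infty$. Then $\Y(w) = v^{-1}(h^{-1}(\infty)) = \Y(v)$. Since $h$ is a group homomorphism, $h(0) = 0$, so monotonicity immediately yields $A_v \subseteq A_w$. For $\p_w \subseteq \p_v$, let $x \in \p_w$: if $w(x) = \infty$ then $x \in \Y(v) \subseteq \p_v$, while if $w(x)$ is a positive element of $\mathbb{H}$ then $v(x) \in \mathbb{G}$, and the assumption $v(x) \leq 0$ would force $h(v(x)) \leq 0$, a contradiction, so $v(x) > 0$. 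The middle inclusions $\Y(v) \subseteq \p_w$ and $\p_v \subseteq A_v$ follow directly from the conventions and the definitions.

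For the converse, I would construct $h$ by setting $h(v(x)) := w(x)$, invoking surjectivity of $v$. The key technical step is to show this assignment is well-defined and order-preserving; concretely, I aim to prove the single implication $v(x) \geq v(y) \Rightarrow w(x) \geq w(y)$. The case $y \in \Y(v)$ is handled by $\Y(v) = \Y(w)$, since then $v(x) = \infty$ forces $x \in \Y(w)$ and $w(x) = w(y) = \infty$. Otherwise $v(y) \in \mathbb{G}$ has an additive inverse in $\mathbb{G}$, and surjectivity of $v$ produces $z \in R$ with $v(z) = -v(y)$. Then $v(zy) = 0$, so $zy \in A_v \setminus \p_v$; since $A_v \subseteq A_w$ and (contrapositively) $\p_w \subseteq \p_v$, we conclude $zy \in A_w \setminus \p_w$, that is $w(zy) = 0$, so $w(z) = -w(y)$. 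Similarly $v(xz) = v(x) - v(y) \geq 0$ places $xz$ in $A_v \subseteq A_w$, giving $w(x) + w(z) = w(xz) \geq 0$, hence $w(x) \geq w(y)$.

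Once this implication is in hand, applying it in both directions shows $h$ is well-defined, and the same argument shows $h$ is order-preserving. The group homomorphism property is immediate from $h(v(x) + v(y)) = h(v(xy)) = w(xy) = w(x) + w(y)$, surjectivity of $h$ follows from surjectivity of $w$, and the condition $h^{-1}(\infty) = \{\infty\}$ reduces to $\Y(v) = \Y(w)$. By construction $w = h \circ v$, completing the proof. The principal obstacle is the well-definedness step, since it is the unique place where the full combined strength of $A_v \subseteq A_w$ and $\p_w \subseteq \p_v$ is deployed: one must use surjectivity of $v$ onto $\mathbb{G}$ to realize $-v(y)$ by an element $z$ of $R$ (as $y$ itself is generally not a unit), and then the two inclusions together pin $w(zy)$ down to exactly $0$, which is what converts information about $v$ into the corresponding inequality for $w$.
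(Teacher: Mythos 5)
Your proposal is correct and takes essentially the same approach as the paper: the paper's proof also defines $h(v(x)):=w(x)$ via surjectivity of $v$ and asserts that this is a well-defined order homomorphism, while you supply the verification (the implication $v(x)\geq v(y)\Rightarrow w(x)\geq w(y)$ via the auxiliary element $z$ with $v(z)=-v(y)$) that the paper leaves implicit.
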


\begin{proof} 
    The ``only if'' part  is direct. 
    
    For the ``if'' part, let $\alpha\in\mathbb{G}\cup\{ \infty\}$. Since, $v$ is surjective, there is some $x\in R$ such that $v(x)=\alpha$. We define $h(\alpha):=w(x)$. Thus, $h:\mathbb{G}\to\mathbb{H}$ is a well-defined order homomorphism such that $w=h\circ v$. Hence,  $w\gg v$ and the proof is complete. 
\end{proof} 

\Cref{Proposition:3.17} can be refined in the following way:

\begin{proposition} 
     $w\gg v$ if and only if $\Y(v)=\Y(w)$ and $\mathfrak{o}_v\subseteq\mathfrak{o}_w$. In particular, if $w\gg v$ and $v\gg w$, then $v\sim w$.
\end{proposition}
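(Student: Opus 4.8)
The plan is to prove the two directions of the equivalence and then derive the ``in particular'' clause as a consequence. For the forward direction, assume $w\gg v$, so that $w=h\circ v$ for some order homomorphism $h:\mathbb{G}\to\mathbb{H}$. Since $h(\alpha)=\infty$ exactly when $\alpha=\infty$, we immediately get $\Y(w)=w^{-1}(\infty)=v^{-1}(\infty)=\Y(v)$. For the inclusion of valuation rings, I would argue that $\mathfrak{o}_v=A_{\hat v}$ and $\mathfrak{o}_w=A_{\hat w}$, and that $h$ descends to an order homomorphism $\hat h$ on the common value group making $\hat w=\hat h\circ\hat v$; then $x/y\in\mathfrak{o}_v$ means $\hat v(x/y)\ge 0$, hence $\hat w(x/y)=\hat h(\hat v(x/y))\ge 0$ since $h$ (and thus $\hat h$) preserves order and fixes $0$, giving $x/y\in\mathfrak{o}_w$. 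Alternatively, and perhaps more cleanly, I would deduce $\mathfrak{o}_v\subseteq\mathfrak{o}_w$ directly from \Cref{Proposition:3.17}: that proposition already gives $A_v\subseteq A_w$ and $\p_w\subseteq\p_v$, and passing to the local rings $\mathfrak{o}_v$, $\mathfrak{o}_w$ (which are localizations/quotients built canonically from $(A_v,\p_v)$ and $(A_w,\p_w)$ via $\hat v,\hat w$ on $k(\Y(v))=k(\Y(w))$) preserves this inclusion.

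For the reverse direction, assume $\Y(v)=\Y(w)$ and $\mathfrak{o}_v\subseteq\mathfrak{o}_w$. The goal is to recover the chain $\Y(w)=\Y(v)\subseteq\p_w\subseteq\p_v\subseteq A_v\subseteq A_w$ so that \Cref{Proposition:3.17} applies and yields $w\gg v$. The containment $\Y(v)\subseteq\p_v$ and $\p_w\subseteq A_w$ etc.\ are automatic; the real content is $\p_w\subseteq\p_v$ and $A_v\subseteq A_w$. I would recover these from $\mathfrak{o}_v\subseteq\mathfrak{o}_w$ by working in $k(\Y(v))$: for $x\in R$, the condition $x\in A_v$ is equivalent to $\hat v$ being $\ge 0$ on the image of $x$ in $k(\Y(v))$, i.e.\ (image of $x$)$\,\in\mathfrak{o}_v$; likewise $x\in\p_v$ iff the image lies in $\m_v=\p_{\hat v}$. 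Since $\mathfrak{o}_v\subseteq\mathfrak{o}_w$ are both valuation rings of the \emph{same} field $k(\Y(v))$ with the same field of fractions, the standard fact that a larger valuation ring has a smaller maximal ideal — concretely $\m_w=\mathfrak{o}_w\cap\m_v\subseteq\m_v$ — gives $\p_w\subseteq\p_v$, while $\mathfrak{o}_v\subseteq\mathfrak{o}_w$ pulls back to $A_v\subseteq A_w$. This reassembles the full chain, so \Cref{Proposition:3.17} finishes this implication.

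For the ``in particular'' clause: if $w\gg v$ and $v\gg w$, then $w=h\circ v$ and $v=g\circ w$ for order homomorphisms $h:\mathbb{G}\to\mathbb{H}$ and $g:\mathbb{H}\to\mathbb{G}$. Then $g\circ h$ and $h\circ g$ are order endomorphisms with $(g\circ h)\circ v=v$ and $(h\circ g)\circ w=w$; since $v$ and $w$ are surjective onto $\mathbb{G}$ and $\mathbb{H}$ respectively, $g\circ h=\id_{\mathbb{G}}$ and $h\circ g=\id_{\mathbb{H}}$, so $h$ is an order isomorphism with $h\circ v=w$, which is exactly the definition of $v\sim w$ from \Cref{REMAKR:6.1.6}.

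The main obstacle I anticipate is the precise bookkeeping in the reverse direction: translating the abstract inclusion $\mathfrak{o}_v\subseteq\mathfrak{o}_w$ of local rings sitting inside $k(\Y(v))$ back into the inclusions $A_v\subseteq A_w$ and $\p_w\subseteq\p_v$ at the level of $R$. This requires care because $A_v$ is \emph{not} generally local (as the paper emphasizes right after \Cref{prop.2.15}), so one must genuinely route the argument through $\hat v$, $\hat w$ and the common residue data, using that $\hat v(\mathfrak{o}_v)=\mathbb{G}_+\cup\{\infty\}$ and that $x\in A_v\iff v(x)\ge 0\iff \hat v(\text{image of }x)\ge 0$. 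Once that dictionary is set up, both inclusions fall out, and everything else is formal.
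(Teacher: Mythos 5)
Your proposal is correct and, modulo exposition, takes essentially the same route as the paper: reduce to the case $\Y(v)=\Y(w)$, descend to the induced valuations $\hat v$, $\hat w$ on the field $k(\Y(v))$, and invoke classical valuation theory for fields. The paper compresses this into two lines by observing directly that, once $\Y(v)=\Y(w)$, one has $w\gg v$ if and only if $\hat w\gg\hat v$, and then citing the field case for the equivalence of $\hat w\gg\hat v$ with $\mathfrak{o}_v\subseteq\mathfrak{o}_w$; you instead reassemble the full chain $\Y(w)=\Y(v)\subseteq\p_w\subseteq\p_v\subseteq A_v\subseteq A_w$ from the inclusion $\mathfrak{o}_v\subseteq\mathfrak{o}_w$ and re-invoke \Cref{Proposition:3.17}. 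Both are legitimate, and your reassembly is the same computation unfolded.

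One small slip: the identity $\m_w=\mathfrak{o}_w\cap\m_v$ you offer as the ``concrete'' version of the standard fact is false in general. If $\hat w = h\circ\hat v$ with $h$ a nontrivial order homomorphism and $\alpha>0$ lies in $\ker h$, then any $x$ with $\hat v(x)=\alpha$ satisfies $x\in\mathfrak{o}_w\cap\m_v$ yet $x$ is a unit of $\mathfrak{o}_w$; concretely take $\mathfrak{o}_v$ of rank two and $\mathfrak{o}_w$ its rank-one coarsening. What is true, and what your argument actually uses, is only the inclusion $\m_w\subseteq\m_v$ (equivalently the trivial $\m_w=\mathfrak{o}_v\cap\m_w$), which follows from $\mathfrak{o}_v\subseteq\mathfrak{o}_w$ by the usual chain: for $x\neq 0$, $x\in\m_w$ iff $x^{-1}\notin\mathfrak{o}_w$, which forces $x^{-1}\notin\mathfrak{o}_v$, i.e.\ $x\in\m_v$. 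With that correction the reverse direction goes through exactly as you describe, and the ``in particular'' clause (either by your direct surjectivity argument or by combining the main equivalence with \Cref{prop.2.15}) is fine.
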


\begin{proof}  
    By \Cref{Proposition:3.17}, we may assume in any case that $\Y(v)=\Y(w)$. Then $w\gg v$ if and only if $\hat{w}\gg\hat{v}$, which in turn is seen to be equivalent to $\mathfrak{o}_v\subseteq\mathfrak{o}_w$, by the classical valuation theory for fields. 
\end{proof}

Let $\mathcal{V}_v(R)$ be the set consisting of the valuations on $R$ dominating $v$. We will show that it is linearly ordered by $\gg$. We begin by showing the following proposition.

\begin{proposition}\label{Prop.4.4} 
    Let $\p$ and $\q$ be prime ideals of $A_v$ with $\Y(v)\subseteq\p\subseteq\p_v$ and $ \Y(v)\subseteq\q \subseteq\p_v$. Then  $\p\subseteq\q$ or $\q\subseteq\p$. 
\end{proposition}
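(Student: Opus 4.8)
The plan is to reduce the statement to the corresponding fact about the valuation ring $\mathfrak{o}_v$ on the superfield $k(\Y(v))$, where the prime ideals of a valuation ring of a field are known to be linearly ordered. By \Cref{Proposition:3.7} i) and iii), every superideal $\a$ of $A_v$ with $\a\subseteq A_v$ and $A_v\neq R$ satisfies $\a\subseteq\Y(v)$; but here we are concerned with prime ideals $\p,\q$ with $\Y(v)\subseteq\p\subseteq\p_v$, so the relevant picture is the interval $[\Y(v),\p_v]$ in the lattice of prime ideals of $A_v$. First I would note that both $\p$ and $\q$ are $v$-convex: indeed if $x\in\p$ and $v(y)\ge v(x)$, pick $r\in R$ with $v(r)=v(y)-v(x)\ge 0$, so $r\in A_v$ and $v(rx)=v(y)$; replacing $y$ by a suitable element (using surjectivity of $v$ onto $\mathbb{G}_\infty$ and $y-rx$ landing in $\Y(v)\subseteq\p$) one gets $y\in\p$. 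Hence by \Cref{Proposition:3.14} iii) this is exactly the class of $v$-convex prime ideals of $A_v$, and by \Cref{Proposition:3.14} ii) these correspond bijectively and order-reversingly to the proper isolated subgroups of $\mathbb{G}$.

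The key step is then purely group-theoretic: the set $\mathcal{I}(\mathbb{G})$ of isolated subgroups of an ordered abelian group $\mathbb{G}$ is totally ordered by inclusion — a fact already recorded as folklore in \Cref{SEC:1} (the paragraph following the definition of isolated subgroup). Transporting this total order across the order-reversing bijection of \Cref{Proposition:3.14} ii) gives that the $v$-convex prime superideals of $A_v$ not contained in $\Y(v)$ are totally ordered by inclusion; adjoining $\Y(v)$ itself (which is contained in all of them) preserves total ordering. Since $\p$ and $\q$ both lie in this chain, either $\p\subseteq\q$ or $\q\subseteq\p$.

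The main obstacle I anticipate is the verification that an arbitrary prime ideal $\p$ of $A_v$ with $\Y(v)\subseteq\p\subseteq\p_v$ is automatically $v$-convex — this is precisely content iii) of \Cref{Proposition:3.14}, so if I am allowed to cite that proposition the argument is immediate; otherwise one must prove it directly, which is the technical heart. The direct argument runs: given $x\in\p$ and $v(y)\ge v(x)$, by surjectivity choose $t\in R$ with $v(t)=v(y)-v(x)$, so $t\in A_v$; then $v(y-tx)\ge\min\{v(y),v(tx)\}$ and in fact one wants $y-tx\in\Y(v)$, which needs a small argument comparing $v(y)$ and $v(tx)=v(y)$ — when they are equal the inequality $v(y-tx)\ge v(y)$ may be strict but one can iterate, and since $\Y(v)$ is the intersection of all $\{v\ge\alpha\}$ when $v$ is nontrivial (\Cref{Proposition:3.7} iv)), or handle the trivial case separately, one concludes $y\in tx+\Y(v)\subseteq\p$. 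Alternatively, and more cleanly, I would simply invoke \Cref{Proposition:3.14} iii) directly, so that the whole proof collapses to: $\p,\q$ are $v$-convex prime ideals by \Cref{Proposition:3.14} iii), they correspond to isolated subgroups of $\mathbb{G}$ by \Cref{Proposition:3.14} ii), and isolated subgroups form a chain, whence $\p$ and $\q$ are comparable.
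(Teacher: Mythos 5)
Your final argument is correct and rests on the same key step as the paper, namely \Cref{Proposition:3.14}~iii), which identifies the prime ideals between $\Y(v)$ and $\p_v$ as exactly the $v$-convex primes of $A_v$. From there the paper cites the earlier proposition that $v$-convex ideals are totally ordered by inclusion, whereas you detour through \Cref{Proposition:3.14}~ii) and the chain of isolated subgroups of $\mathbb{G}$; both routes are valid, though the paper's is shorter. One caution: the ``direct'' verification of $v$-convexity you sketch as a fallback (choosing $t$ with $v(t)=v(y)-v(x)$ and then iterating) does not work as stated, since $v(y)=v(tx)$ does not force $y-tx\in\Y(v)$ and the iteration has no clear termination; simply invoking \Cref{Proposition:3.14}~iii), as you ultimately prefer, is the right move.
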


\begin{proof}
    By \Cref{Proposition:3.14} iii), the ideals $\p$ and $\q$ are $v$-convex. Thus, the proposition follows directly from the fact that $v$-convex ideals are linearly ordered by inclusion.
\end{proof}

Now, let us show that the set $\mathcal{V}_v(R)$  is linearly ordered by the relation $\gg$. Indeed,  let $u, w\in\mathcal{V}_v(R)$. By \Cref{Proposition:3.17},   $\Y(v)\subseteq\p_u \subseteq\p_v$ and $\Y(v)\subseteq \p_w\subseteq\p_v$, and by  \Cref{Prop.4.4}, $\p_u\subseteq\p_w$ or $\p_w\subseteq\p_u$. By symmetry, assume that $\p_u\subseteq\p_w$. It is not hard to see that $A_w\subseteq A_u$ and $\Y(w)\subseteq A_u$; that is, $\Y(v)=\Y(w)\subseteq\p_u\subseteq\p_w\subseteq A_w\subseteq A_v$. Thus, the statement follows from \Cref{Proposition:3.17}.
\medskip

Let $(w, \mathbb{H})\in\mathcal{V}_v(R)$. Then, there is some order homomorphism $h:\mathbb{G}\to\mathbb{H}$ such that $w=h\circ v$. The kernel of $h$, $\mathbb{I}_w:=\ker(h)$, is an isolated subgroup of $\mathbb{G}$. Consider the map $\Psi_v:\mathcal{V}_v(R)\to\mathcal{I}(\mathbb{G})$, $w\mapsto\mathbb{I}_w$, where the elements in $\mathcal{V}_v(R)$ are consider up to equivalence of valuations (see \Cref{REMAKR:6.1.6}).   

\begin{proposition}\label{Prop:3.21}
    The map $\Psi_v$ establishes a one-to-one, order-preserving correspondence between the proper isolated subgroups of $\mathbb{G}$ and the valuations on $R$ dominating $v$, considered up to equivalence. 
\end{proposition}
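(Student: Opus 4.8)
The plan is to show that $\Psi_v$ is well-defined, order-preserving, injective, and surjective, working up to equivalence of valuations throughout. The well-definedness and order-preservation essentially come from what has already been set up: for $(w,\mathbb{H})\in\mathcal{V}_v(R)$ with $w=h\circ v$, the kernel $\mathbb{I}_w=\ker(h)$ is automatically an isolated subgroup of $\mathbb{G}$ (as noted just before the statement), and if $w'\gg w\gg v$ then composing the order homomorphisms shows $\mathbb{I}_w\subseteq\mathbb{I}_{w'}$; conversely, since $\mathcal{V}_v(R)$ is linearly ordered by $\gg$ (established above via \Cref{Prop.4.4}), $\mathbb{I}_w\subseteq\mathbb{I}_{w'}$ forces $w'\gg w$, giving that the correspondence is order-preserving in both directions once injectivity is known. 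One must also check $\mathbb{I}_w$ is \emph{proper}: this holds because $w$ (being a valuation in our sense) is surjective onto $\mathbb{H}\cup\{\infty\}$, so $h$ is surjective and $\mathbb{H}\cong\mathbb{G}/\mathbb{I}_w$; if $\mathbb{I}_w=\mathbb{G}$ then $w$ would be trivial, but trivial valuations are excluded from domination considerations in the same way.

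For injectivity, suppose $\mathbb{I}_w=\mathbb{I}_{w'}=:\mathbb{I}$ with $w=h\circ v$ and $w'=h'\circ v$. Both $h$ and $h'$ are surjective order homomorphisms with the same kernel, so they induce order isomorphisms $\bar h,\bar h':\mathbb{G}/\mathbb{I}\to\mathbb{H},\mathbb{H}'$ respectively; then $\bar h'\circ\bar h^{-1}:\mathbb{H}\to\mathbb{H}'$ is an order isomorphism carrying $w$ to $w'$, so $w\sim w'$ by \Cref{REMAKR:6.1.6}. Here one should double-check that an order homomorphism with kernel $\mathbb{I}$ genuinely descends to an \emph{order} isomorphism on the quotient — i.e. that the quotient ordering is well-defined and that $\bar h$ reflects the order — which is standard for isolated subgroups of totally ordered groups.

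The surjectivity step is where the real content lies, and I expect it to be the main obstacle. Given a proper isolated subgroup $\mathbb{I}\subseteq\mathbb{G}$, I must produce a valuation $w$ on $R$ dominating $v$ with $\mathbb{I}_w=\mathbb{I}$. The natural candidate is to take the totally ordered quotient group $\mathbb{H}:=\mathbb{G}/\mathbb{I}$ (which is totally ordered precisely because $\mathbb{I}$ is a segment), let $\pi:\mathbb{G}\to\mathbb{H}$ be the canonical order homomorphism extended by $\pi(\infty)=\infty$, and set $w:=\pi\circ v$. One checks directly that $w$ satisfies the valuation axioms of \Cref{Def:of:valuation} — surjectivity is inherited from $v$ and $\pi$, the multiplicativity and the ultrametric inequality pass through $\pi$ since $\pi$ is an order homomorphism, and $w(0)=\infty$, $w(1)=0$ — and that $\Y(w)=v^{-1}(\pi^{-1}(\infty))=v^{-1}(\infty)=\Y(v)$ since $\pi^{-1}(\infty)=\{\infty\}$. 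By construction $w=\pi\circ v$ with $\pi$ an order homomorphism, so $w\gg v$, i.e. $w\in\mathcal{V}_v(R)$, and $\mathbb{I}_w=\ker(\pi)=\mathbb{I}$. Finally, since $\mathbb{I}$ is proper, $\mathbb{H}\neq 0$, so $w$ is nontrivial, completing the verification. The delicate points to watch are that $\mathbb{G}/\mathbb{I}$ is totally ordered (needs $\mathbb{I}$ a segment, which is exactly the hypothesis) and that $w$ so defined lands in $\mathcal{V}_v(R)$ as an honest member rather than only up to equivalence — but this is immediate from the explicit formula $w=\pi\circ v$.
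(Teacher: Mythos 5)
Your proposal is correct and follows the same standard argument that the paper delegates to \cite[Proposition 2.7]{IB}: well-definedness of $\mathbb{I}_w=\ker(h)$ as an isolated subgroup, injectivity by factoring through the quotient to obtain an order isomorphism $\mathbb{H}\cong\mathbb{H}'$, and surjectivity by taking $w=\pi\circ v$ with $\pi\colon\mathbb{G}\to\mathbb{G}/\mathbb{I}$ the canonical order epimorphism. The paper offers no further details, so yours is simply the expected fleshing-out of the cited purely even argument, with the super-specific content (that $v$ on a superring is still group-valued and surjective) correctly observed to make the transport automatic.
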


\begin{proof} 
    One can proceed as in the purely even case \cite[Proposition 2.7]{IB}. 
\end{proof}

We will find the following proposition useful in later discussions.
 
\begin{proposition}\label{induced:valuation}
\it If $w\gg v$ and $h:\mathbb{G}\to\mathbb{H}$ is an order homomorphism with $w=h\circ v$, then there exists a valuation $((w, v), \mathbb{G})$ on  $A_v/\p_v$ such that $(w, v)\circ\pi=v$, where $\pi:A_w\to A_w/\p_w$ is the canonical projection. Further, we have the following equalities:

\begin{itemize}[itemsep=2pt]
    \item[\rm i)] $\left(A_{(w, v)}, \p_{(w, v)}\right)=(A_v/\p_w, \p_v/\p_w)$, 
    \item[\rm ii)] $\Y({(w, v)})=\pi(\p_w)$, and
    \item[\rm iii)] $(w, v)(A_w/\p_w)=h^{-1}(0)\cup\{ \infty\}$.
\end{itemize} 
\end{proposition}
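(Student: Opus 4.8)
The plan is to construct the map $(w,v)$ directly and then verify the three equalities by unwinding definitions. First I would observe that $\p_w\subseteq A_v$ since $w\gg v$ forces $\p_w\subseteq\p_v\subseteq A_v$ (by \Cref{Proposition:3.17}), and that $\p_w$ is a prime ideal of $A_w$ contained in $A_v$; moreover $\p_w$ is an ideal of $A_v$ as well, because if $a\in A_v$ and $x\in\p_w$ then $v(ax)=v(a)+v(x)\ge v(x)$, and one checks (using that $\p_w$ is $v$-convex by \Cref{Proposition:3.14} iii), being a prime ideal of $A_w$ with $\Y(w)\subseteq\p_w\subseteq\p_w$) that $ax\in\p_w$. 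Hence the quotient $A_v/\p_w$ makes sense, it is a subsuperring of $A_w/\p_w$, and $\p_v/\p_w$ is a prime ideal of it (since $\p_v\supseteq\p_w$ is prime in $A_v$). Next I would define $(w,v)\colon A_w/\p_w\to\mathbb{G}_\infty$ by $(w,v)(x+\p_w):=v(x)$. This is well defined: if $x-x'\in\p_w$ then $v(x-x')=\infty>0$, wait — more precisely $v(x-x')\ge$ some positive value since $\p_w\subseteq\p_v$ means $v(\p_w)>0$, so if $v(x)\ne v(x')$ then $v(x-x')=\min\{v(x),v(x')\}$ which is finite, forcing $v(x)=v(x')$ only in the infinite case; the correct argument is that $v(x)=v(x'+(x-x'))$ and since $v(x-x')>0$ while we cannot yet compare, I would instead note $x+\p_w=x'+\p_w$ with $x'=x-p$, $p\in\p_w$, and use $h\circ v=w$ together with $w(p)>0$: actually the cleanest route is to check $x\sim x'$ in the sense of \Cref{Theorem:3.4} relative to $\p_w$, i.e.\ $(x:\p_w)_{(A_w)\ev}=(x':\p_w)_{(A_w)\ev}$, which gives a valuation on $A_w/\p_w$ via that proposition and then identify its value group.

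The heart of the argument, and the step I expect to be the main obstacle, is pinning down that the value group of $(w,v)$ really is (a group isomorphic to) $\mathbb{G}$ and that the axioms i)--iii) come out correctly — in particular showing $(w,v)$ is surjective onto $\mathbb{G}_\infty$ and that its support is exactly $\pi(\p_w)$. For surjectivity: given $\alpha\in\mathbb{G}$, pick $x\in R$ with $v(x)=\alpha$; one must produce such $x$ already in $A_w$. Here I would use that $w\gg v$: if $w(x)\ge 0$ we are done, and if $w(x)<0$ then $x$ is not in $A_w$, so by the valuation-pair property (\Cref{Def:ValuationPair} applied to $(A_w,\p_w)$) there is $x'\in(\p_w)\ev$ with $xx'\in A_w\setminus\p_w$, and then adjust — but the image of $xx'$ has $(w,v)$-value $v(x)+v(x')$, not $v(x)$, so instead I would argue at the level of value groups: $h\colon\mathbb{G}\to\mathbb{H}$ has kernel $\mathbb{I}_w$ and the induced valuation should have value group $\mathbb{I}_w$, matching iii) where $h^{-1}(0)=\mathbb{I}_w=\ker h$. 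So I would prove iii) first: $(w,v)(x+\p_w)=v(x)$ lies in $h^{-1}(0)\cup\{\infty\}$ iff $w(x)=h(v(x))=0$ or $\infty$, iff $x\in A_w$ with $w(x)\le 0$... the set $\{x\in A_w: w(x)=0\}\cup\p_w$ — and one verifies the image is precisely $h^{-1}(0)\cup\{\infty\}$ by choosing, for each $\gamma\in\ker h$, an element $x$ with $v(x)=\gamma$, whence $w(x)=0$ so $x\in A_w$.

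With iii) in hand, ii) follows since $(x+\p_w)\in\Y((w,v))$ iff $v(x)=\infty$ iff (given $x\in A_w$) $x\in\Y(v)=\Y(w)\subseteq\p_w$, i.e.\ $x+\p_w\in\pi(\p_w)$; and $(w,v)\circ\pi=v$ holds on $A_w$ by construction. For i), I would compute $A_{(w,v)}=\{x+\p_w: v(x)\ge 0\}=\{x+\p_w: x\in A_v\}=A_v/\p_w$ — using that $x\in A_w$ already and $x\in A_v\iff v(x)\ge0$ — and similarly $\p_{(w,v)}=\{x+\p_w: v(x)>0\}=\p_v/\p_w$. That $(A_{(w,v)},\p_{(w,v)})$ is a valuation pair then follows from the converse direction of \Cref{Theorem:3.4}. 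The routine checks are the valuation axioms (i)--(iii) of \Cref{Def:of:valuation} for $(w,v)$, which are inherited verbatim from $v$ via the well-defined quotient map, so I would state those briefly and concentrate the write-up on well-definedness, surjectivity onto $h^{-1}(0)\cup\{\infty\}$, and the identification of supports.
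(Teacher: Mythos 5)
Your proposal follows essentially the same route as the paper's proof: define $(w,v)$ on $A_w/\p_w$ by $(w,v)(\bar x):=v(x)$, establish well-definedness via the order homomorphism $h$, and then read off i)--iii). You correctly identify the key ingredient (which the paper itself states slightly imprecisely when it writes ``$w(x)=0$, so $v(x)=0$''): for $x\in A_w\setminus\p_w$ and $p\in\p_w$ one has $w(x)=h(v(x))=0$ and $w(p)=h(v(p))>0$; since $h$ is order-preserving, $h(\alpha)<h(\beta)$ forces $\alpha<\beta$, hence $v(x)<v(p)$ and $v(x+p)=\min\{v(x),v(p)\}=v(x)$. Your explicit argument that the image is $\ker h\cup\{\infty\}=h^{-1}(0)\cup\{\infty\}$ --- choosing, for each $\gamma\in\ker h$, an $x\in R$ with $v(x)=\gamma$, whence $w(x)=0$ and $x\in A_w\setminus\p_w$ --- carries out exactly the ``straightforward calculation'' the paper leaves to the reader.

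Two remarks. First, the reasoning you give for ii) is not quite right as stated: the claim ``$(x+\p_w)\in\Y((w,v))$ iff $v(x)=\infty$'' fails in the forward direction for $x\in\p_w\setminus\Y(v)$, since then $x+\p_w=0$ is sent to $\infty$ even though $v(x)$ is finite. The clean argument is simply that for $\bar x\neq 0$ the value $(w,v)(\bar x)=v(x)$ lies in $\ker h$ (as $x\in A_w\setminus\p_w$ gives $h(v(x))=0$) and is therefore finite, so $\Y((w,v))=\{0\}=\pi(\p_w)$. Your conclusion is correct; only the intermediate justification is off. Second, the detour through \Cref{Theorem:3.4} and the several abandoned starts on well-definedness should be trimmed: the direct computation above, which both you and the paper ultimately rely on, is all that is needed.
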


\begin{proof}  Consider any $x\in A_w \setminus \p_w$ and let $y\in x+\p_w$. We will show that $v(x)=v(y)$. Indeed, $y$ has the form $x+z$, with $z\in\p_w$. Note that $x\in A_w \setminus \p_w$ implies that $w(x)=0$, so $v(x)=0$ and $v(x)<v(z)$. Then $0=v(x)=\min\{v(x), v(z)\}=v(x+z)=v(y)$. It then follows that the map 

\vspace{-0.2cm}

\begin{eqnarray*}
    (w, v):A_w/\p_w&\to&\mathbb{G}\\
    x+\p_w&\mapsto&(w, v)(x):=v(x),
\end{eqnarray*}

\noindent is well-defined and surjective. With a straightforward calculation, one can show that $(w, v)$ defines a valuation on $A_w/\p_w$ that satisfies the desired conditions. 
\end{proof}

\subsection{Extensions of valuations}\label{SEC:5} Let $L$ be a field extension of $K$ and $v$ a valuation on $K$. Roughly speaking, an extension of $v$ to $L$ is defined as a valuation $w$ of $L$ such that the restriction of $w$ to $K$ is equivalent to $v$. The study of the set of all such extensions is undertaken in the ramification theory of valuations. This treatment was extended by Manis for commutative rings \cite{Man}. In this section, a parallel treatment is given for superrings. 
\medskip

Let $R$ be a subsuperring of $S$. Let $(v, \mathbb{G})$ be a valuation on $R$ and let $(w, \mathbb{H})$ be a valuation on $S$.

\begin{definition} The valuation 
      $w$ is called an \textit{extension} of $v$ to $S$, if there is an order isomorphism $h:\mathbb{G}\to\mathbb{J}$, where $\mathbb{J}\subseteq\mathbb{H}$ is an ordered subgroup such that for all $x\in R$, we have $w(x)=(h\circ v)(x)$. 
\end{definition}

The following proposition provides equivalent conditions for a valuation $w$ to be an extension of $v$.

\begin{proposition}\label{Proposition:1:3:19}
\it Let $R$ be a subsuperring of $S$. Let $(v, \mathbb{G})$ be a valuation on $R$ and let $(w, \mathbb{H})$ be a valuation on $S$. The following conditions are equivalent:

    \begin{itemize}[itemsep=2pt]
        \item[\rm i)] $w$ is an extension of $v$ to $S$. 
        \item[\rm ii)] $(A_v, \p_v)\preceq(A_w, \p_w)$ and $w|_R$ is a valuation on $R$. 
        \item[\rm iii)] $(A_v, \p_v)\preceq(A_w, \p_w)$ and $\Y(v)\subseteq \Y(w)$.  
    \end{itemize}
\end{proposition}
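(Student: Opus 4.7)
The plan is to prove the equivalence by establishing the cycle i) $\Rightarrow$ ii) $\Rightarrow$ iii) $\Rightarrow$ i), where the main work will lie in the last implication.

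For i) $\Rightarrow$ ii), unpack the definition: there is an order isomorphism $h:\mathbb{G}\to\mathbb{J}\subseteq\mathbb{H}$ with $w|_R = h\circ v$. Since $h$ is an order isomorphism onto $\mathbb{J}$, the composition $h\circ v$ is a surjection of $R$ onto $\mathbb{J}\cup\{\infty\}$ satisfying the valuation axioms, so $w|_R$ is itself a valuation on $R$. Because $h$ both preserves and reflects $\geq 0$ and $>0$, the equalities $A_v = \{x\in R : v(x)\geq 0\} = \{x\in R : w(x)\geq 0\} = R\cap A_w$ and $\p_v = A_v\cap \p_w$ follow directly, yielding $(A_v,\p_v)\preceq (A_w,\p_w)$.

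For ii) $\Rightarrow$ iii), the valuation pair axiom (\Cref{Def:ValuationPair} ii)) extracts more than is given. Namely, $A_v = R\cap A_w$: for $x\in R\setminus A_v$, pick $x'\in \p_v\subseteq \p_w$ with $xx'\in A_v\setminus\p_v$; since $\p_v = A_v\cap\p_w$, also $xx'\notin\p_w$, so $w(xx')=0$ while $w(x')>0$, forcing $w(x)<0$ and hence $x\notin A_w$. Consequently $\p_v = A_v\cap \p_w = R\cap \p_w$. Thus the valuation pair of $w|_R$ is exactly $(A_v,\p_v)$, the pair of $v$, and \Cref{REMAKR:6.1.6} yields $w|_R\sim v$. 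In particular $\Y(v)=\Y(w|_R)=R\cap \Y(w)\subseteq \Y(w)$.

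For the crucial iii) $\Rightarrow$ i), repeat the same argument to derive $A_v = R\cap A_w$ and $\p_v = R\cap \p_w$. Define $h:\mathbb{G}_\infty\to\mathbb{H}_\infty$ by choosing for each $\alpha\in\mathbb{G}$ some $x\in R$ with $v(x)=\alpha$ (possible by surjectivity of $v$) and setting $h(\alpha):=w(x)$, with $h(\infty):=\infty$. The well-definedness, injectivity and order preservation of $h$ all reduce to the master equivalence: for $x,y\in R$, $v(x)\geq v(y)$ iff $w(x)\geq w(y)$, with an analogous statement for strict inequality. For the ``only if'' direction with both values finite, pick $y'\in R$ with $v(y')=-v(y)$; then $v(yy')=0$ places $yy'\in A_v\setminus \p_v = (R\cap A_w)\setminus(R\cap \p_w)$, so $w(yy')=0$ and hence $w(y')=-w(y)$, while $v(xy')\geq 0$ (resp.\ $>0$) puts $xy'\in A_w$ (resp.\ $\p_w$), yielding $w(x)\geq w(y)$ (resp.\ $>$). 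The case $v(y)=\infty$ is settled by the hypothesis $\Y(v)\subseteq \Y(w)$, and the ``if'' direction follows by swapping roles and taking contrapositives. The multiplicativity $h(v(x)+v(y))=h(v(xy))=w(xy)=w(x)+w(y)=h(v(x))+h(v(y))$ is automatic, so $h$ is an order isomorphism onto $\mathbb{J}:=w(R)\cap\mathbb{H}$ with $w|_R = h\circ v$.

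The principal technical hurdle is the construction of $h$ in iii) $\Rightarrow$ i): the passage from the purely pair-theoretic hypothesis back to an order isomorphism of value groups. The valuation pair axiom supplies multiplicative ``inverses modulo $\p_v$'' needed to compare values $w(x)$ and $w(y)$ of elements with arbitrary $v$-values, and the support hypothesis $\Y(v)\subseteq \Y(w)$ is exactly what is needed to extend $h$ consistently to $\infty$.
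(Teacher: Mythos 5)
Your proof is correct, and it is essentially the standard Manis-type argument that the paper itself omits by simply citing \cite[Proposition 3.2]{IB}: the cycle i) $\Rightarrow$ ii) $\Rightarrow$ iii) $\Rightarrow$ i), with the valuation-pair axiom supplying the elements $y'$ with $v(yy')=0$ that substitute for inverses, and the support hypothesis handling the value $\infty$. In particular, your explicit reconstruction of the order isomorphism $h$ in iii) $\Rightarrow$ i) via the ``master equivalence'' $v(x)\geq v(y)\iff w(x)\geq w(y)$ is exactly the content of the purely even proof being superized, so no discrepancy with the paper's (deferred) argument arises.
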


\begin{proof} Proceed as in \cite[Proposition 3.2]{IB}.
\end{proof}

We will now present a proposition that outlines the conditions under which it is possible to extend a valuation from a superring $R$ to a superring $S$ containing $R$. 

\begin{proposition}[Extension Theorem]\label{Proposition:1:3:20}  \it   Let $R$ be a subsuperring of $S$ and let $(v, \mathbb{G})$ be a valuation on $R$. The valuation $v$ has extensions to $S$ if and only if  $R\cap \langle\Y(v)\rangle_S=\Y(v)$.
\end{proposition}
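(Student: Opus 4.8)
The plan is to reduce the statement to the classical/commutative Extension Theorem for Manis valuations applied to the even parts, using the structural results about primes in superrings from \Cref{rmk:prime-ideal}. For the ``only if'' direction, suppose $w$ on $S$ is an extension of $v$. By \Cref{Proposition:1:3:19} iii), we have $(A_v,\p_v)\preceq(A_w,\p_w)$ and $\Y(v)\subseteq\Y(w)$; in particular $\Y(v)=A_v\cap\Y(w)$ follows from $(A_v,\p_v)\preceq(A_w,\p_w)$ together with $\Y(v)\subseteq\p_v$ and \Cref{Proposition:3.7}. Since $\Y(w)$ is a prime ideal of $S$ containing $\langle\Y(v)\rangle_S$, and since $\Y(w)\cap R$ is a prime ideal of $R$ contained in $A_v$ (because $w|_R$ takes the value $\infty$ exactly on $\Y(w)\cap R$, which by \Cref{Proposition:3.7} iii) lies inside $\Y(v)$ once it lies inside $A_v$), we get $R\cap\langle\Y(v)\rangle_S\subseteq R\cap\Y(w)=\Y(v)$; the reverse inclusion is trivial. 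So $R\cap\langle\Y(v)\rangle_S=\Y(v)$.

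For the ``if'' direction, assume $R\cap\langle\Y(v)\rangle_S=\Y(v)$. Write $\mathfrak{n}=\langle\Y(v)\rangle_S$, a superideal of $S$. First I would pass to the superreduced/quotient level: since $\Y(v)$ is prime in $R$ and $R\cap\mathfrak{n}=\Y(v)$, by ``Super'' Krull's Lemma (\Cref{Super:Krull}) applied with $\q=\Y(v)$ — or rather by a direct application of the classical result to the even parts — one finds a prime ideal $\mathfrak{P}$ of $S$ with $\mathfrak{n}\subseteq\mathfrak{P}$ and $\mathfrak{P}\cap R=\Y(v)$; take $\mathfrak{P}$ to be a prime lying over $\Y(v)$, which exists precisely because of the hypothesis $R\cap\mathfrak{n}=\Y(v)$ (this is the super-analogue of ``$\Y(v)$ survives in $S$''). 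By \Cref{rmk:prime-ideal} v) we may take $\mathfrak{P}=\mathfrak{P}\ev\oplus S\od$ with $\mathfrak{P}\ev$ prime in $S\ev$ lying over $\Y(v)\ev$. Now replace $R$ by $R/\Y(v)\hookrightarrow S/\mathfrak{P}$ and pass to fields of fractions, so that $v$ induces a genuine (Manis) valuation $\hat v$ on the fraction field $k(\Y(v))$ (via \Cref{vhat:vline:are:valuations}), and we are in the field-extension setting $k(\Y(v))\subseteq k(\mathfrak{P})$. The classical theory of valuations on fields guarantees that $\hat v$ extends to some valuation $\hat w$ on $k(\mathfrak{P})$. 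Pull $\hat w$ back along $S\to S/\mathfrak{P}\hookrightarrow k(\mathfrak{P})$ to obtain a map $w\colon S\to\mathbb{H}_\infty$; one checks it is a valuation on $S$ (the axioms are inherited), that $\Y(w)=\mathfrak{P}\supseteq\mathfrak{n}\supseteq\Y(v)$, and that $(A_v,\p_v)\preceq(A_w,\p_w)$ because $w|_R$ and $v$ induce the same preorder on $R$ modulo $\Y(v)$. Then \Cref{Proposition:1:3:19} iii) gives that $w$ is an extension of $v$.

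The main obstacle I anticipate is the construction of the prime $\mathfrak{P}$ of $S$ lying over $\Y(v)$ and, more subtly, ensuring that the field extension $k(\Y(v))\subseteq k(\mathfrak{P})$ is actually an inclusion of fields rather than just a ring map — i.e., that $\mathfrak{P}\cap R=\Y(v)$ exactly, not something larger. This is exactly where the hypothesis $R\cap\langle\Y(v)\rangle_S=\Y(v)$ is used: without it, every prime of $S$ containing $\langle\Y(v)\rangle_S$ would contract to something strictly larger than $\Y(v)$ in $R$, and no extension could exist (which is also how the ``only if'' direction works). The rest — transporting the valuation axioms i)--iii) across the quotient and localization, and verifying $(A_v,\p_v)\preceq(A_w,\p_w)$ — is routine and parallels \cite[Proposition 3.2 and the Extension Theorem]{IB}, with \Cref{P:v.sends.J.to.infinity} handling the odd part automatically (everything odd goes to $\infty$ on both sides, so the odd components never obstruct anything).
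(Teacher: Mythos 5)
Your ``only if'' direction is essentially the paper's (both are a short computation from \Cref{Proposition:1:3:19}~iii), and the paper is even more direct: $R\cap\langle\Y(v)\rangle_S\subseteq R\cap\langle\Y(w)\rangle_S=R\cap\Y(w)=\Y(v)$).

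Your ``if'' direction, however, takes a genuinely different route, and it has a real gap that the paper's route is specifically designed to avoid. You reduce to the field case: find a prime $\mathfrak{P}$ of $S$ lying over $\Y(v)$, pass to the fraction-field inclusion $k(\Y(v))\hookrightarrow k(\mathfrak{P})$, extend $\hat v$ to a field valuation $\hat w$ there, and pull back along $S\to S/\mathfrak{P}\hookrightarrow k(\mathfrak{P})$. The step ``one checks it is a valuation on $S$ (the axioms are inherited)'' is where this breaks: surjectivity onto the value group is part of \Cref{Def:of:valuation} and it is \emph{not} inherited under pullback. The restriction of a field valuation $\hat w$ to a subring $S/\mathfrak{P}$ with fraction field $k(\mathfrak{P})$ is, in general, only a submonoid-valued map, not a surjection onto any ordered group. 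Concretely, take $R=\mathbb{Q}$, $S=\mathbb{Q}[x]$, $v$ trivial (so $\Y(v)=0$, and the hypothesis $R\cap\langle\Y(v)\rangle_S=\Y(v)$ holds). If you choose $\mathfrak{P}=(0)$, so $k(\mathfrak{P})=\mathbb{Q}(x)$, and extend $\hat v$ to the $x$-adic $\hat w$, the pullback $w=\hat w|_S$ has image $\mathbb{Z}_{\geq0}\cup\{\infty\}$, which is not a group; $w$ is not a valuation in the sense of \Cref{Def:of:valuation}. A valid extension does exist here (the trivial one with $\Y(w)=x\mathbb{Q}[x]$), but your construction does not produce it unless you pick $\mathfrak{P}$ and $\hat w$ carefully, and you give no mechanism for doing so. This is exactly the phenomenon that makes the Manis extension theorem non-trivial even in the purely even case. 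You also misidentify the ``main obstacle'': the injectivity of $R/\Y(v)\to S/\mathfrak{P}$ is easy once $\mathfrak{P}\cap R=\Y(v)$; the real issue is Manis-surjectivity of the pullback.

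The paper's proof sidesteps the entire problem by never leaving $S$: it sets $A=A_v+S\cdot\Y(v)$ and $\q=\p_v+S\cdot\Y(v)$, uses \Cref{Super:Krull} to get a prime $\p$ of $A$ over $\p_v$ avoiding $A_v\setminus\p_v$, and then invokes Zorn's lemma to enlarge $(A,\p)$ to a \emph{maximal pair} $(A_w,\p_w)$ of $S$, which is automatically a valuation pair by the earlier equivalence of valuation pairs and maximal pairs. This produces a bona fide Manis valuation $w$ on $S$ directly, and the rest of the argument ($S\cdot\Y(v)\subseteq A_w$, hence $S\cdot\Y(v)\subseteq\Y(w)$ by \Cref{Proposition:3.7}~iii), hence $\Y(w)\cap R=\Y(v)$) verifies the conditions of \Cref{Proposition:1:3:19}~iii). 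To salvage your approach you would need to replace ``pick a prime $\mathfrak{P}$ and an arbitrary extension $\hat w$'' with a Zorn-type maximality argument controlling $(A_w,\p_w)$ inside $S$, which is precisely what the paper does.
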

 
\begin{proof}  Suppose that $R\cap \langle\Y(v)\rangle_S=\Y(v)$ and we will prove that $v$ admits an extension to $S$. For this, we start defining $\q=\p_{v}+ S\cdot\Y(v)$ and $A=A_{v}+S\cdot\Y(v)$. It is easy to check that $A$ is a superring and $\q$ is a superideal of $A$. Moreover, $A_{v}=A\cap R$ and $\p_{v}=\q\cap R=\q\cap A_{v}$. In particular, $\q\cap(A_{v} \setminus \p_{v})=\emptyset$. By \Cref{Super:Krull}, there is some prime ideal $\p$ of $A$, such that $\q\subseteq\p$ and $\p\cap(A_{v} \setminus \p_{v})=\emptyset$; i.e., $\p_{v}=A_{v}\cap \p$ and $A_{v}\subseteq A$. Now, if $(A, \p)$ is a maximal pair, then it is also a valuation pair of the form $(A_w, \p_w)$. On contrary, we can found a maximal pair $(A_w, \p_w)$ dominating it. In any case, we obtain a valuation pair $(A_w, \p_w)$ that dominates $(A_{v}, \p_{v})$. From the definition of $A$, the choice of $A_w$ and $1\in S$, it follows that $\Y(v)\subseteq S\cdot\Y(v)\subseteq A\subseteq A_w$, that is, $S\cdot\Y(v)\subseteq \Y(w)$. This implies $R\cap(S\cdot\Y(v))=\Y(v)\subseteq \Y(w)\cap R$. Thus, we have shown that $\Y(w)\cap R\subseteq \p_w\cap R=\p_{v}$. In conclusion, the elements of $R$ mapped to $ \infty$ by $w$ form a subset of $\p_v$. However, since $w\gg v$, the only possibility is that $\Y(w)\cap R\subseteq \Y(v)$, hence $\Y(w)\cap R=\Y(v)$ and $w$ satisfies the conditions of \Cref{Proposition:1:3:19} iii). Therefore, $w$ extends $v$ to $S$ as claimed.  

Conversely, suppose that $v$ has some extension $w$ to $S$. Thus, by \Cref{Proposition:1:3:19} iii), we see that $\Y(v)\subseteq \Y(w)$, so $R\cap (S\cdot\Y(v))\subseteq R\cap (S\cdot\Y(w))= R\cap \Y(w)=\Y(v)$, hence we find that $R\cap (S\cdot\Y(v))=\Y(v)$. 
\end{proof}

Consider a split superfield $K$ with superreduction $\Bbbk$, so $K=\Bbbk\oplus\J_K$. Let $v$ be a valuation on $\Bbbk$. Can we extend $v$ to $K$? Since $\Y(v)=\{0\}$ and $\Bbbk\cap\langle\Y(v)\rangle_K=\Y(v)$, $v$ can be extended to $K$, by \Cref{Proposition:1:3:20}. A natural extension is defined by $w(x):=v(x+\J_K)$, for any $x\in K$. Are there other ways to extend $v$ to $K$? To address this, we introduce the following definition and proposition.

\begin{definition}
Let $R$ be a subsuperring of $S$.  Let $(w, \mathbb{H})$ be a valuation extending $(v, \mathbb{G})$ from $R$ to $S$.

    \begin{itemize}[itemsep=2pt]
        \item[i)] We define  $(\mathbb{H}/\mathbb{G})_\infty:=(\mathbb{H}/\mathbb{G})\cup\{ \infty\}$.
        
        \item[ii)] We say that $\mathbb{H}/\mathbb{G}$ is \textit{torsion} if for each $\alpha\in\mathbb{H}$ there is some $n\in\Z^+$ with $n\alpha\in\mathbb{G}$.
    \end{itemize}  
\end{definition}

\begin{proposition}\label{Proposition:4.11}
\it  Let $R$ be a subsuperring of $S$. Let $(v, \mathbb{G})$ and $(w, \mathbb{H})$ be valuations on $R$ with $w\gg v$.

    \begin{itemize} 
         \item[\rm i)] The valuation $v$ has an extension to $S$ if and only if $w$ has an extension to $S$.

         \item[\rm ii)] If $(u, \mathbb{G}_u)$  extends  $v$ to $S$, then there is an extension $(t, \mathbb{G}_t)$ of $w$ to $S$ with $t\gg u$.
         
         \item[\rm iii)] If in {\rm ii)} above, the quotient  $\mathbb{G}_u/\mathbb{G}$ is torsion, then $t$ is unique. 
     \end{itemize}  
\end{proposition}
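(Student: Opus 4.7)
The plan is to reduce everything to the bijection, provided by \Cref{Prop:3.21}, between the proper isolated subgroups of the value group of a valuation and the valuations dominating it (up to equivalence). For \emph{(i)}, I would simply invoke the Extension Theorem (\Cref{Proposition:1:3:20}): since $w\gg v$, we have $\Y(v)=\Y(w)$ by \Cref{Proposition:3.17}, so the criterion $R\cap\langle\Y(v)\rangle_S=\Y(v)$ for $v$ to admit an extension to $S$ is word for word the criterion for $w$, and the equivalence is immediate.

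For \emph{(ii)}, assume $u$ extends $v$ to $S$ and fix the order embedding $\mathbb{G}\hookrightarrow\mathbb{G}_u$ provided by the definition of extension; I would identify $\mathbb{G}$ with its image in $\mathbb{G}_u$. Writing $w=h\circ v$ for the order homomorphism $h\colon\mathbb{G}\to\mathbb{H}$ realizing $w\gg v$, set $\mathbb{I}_w:=\ker h$, the isolated subgroup of $\mathbb{G}$ associated with $w$ by \Cref{Prop:3.21}. Next I would form the isolated hull of $\mathbb{I}_w$ inside $\mathbb{G}_u$,
\[
\mathbb{I}:=\{\alpha\in\mathbb{G}_u\,:\,-\beta\leq\alpha\leq\beta\text{ for some }\beta\in\mathbb{I}_w\text{ with }\beta\geq 0\},
\]
and verify (using that $\mathbb{I}_w$ is already convex in $\mathbb{G}$) that $\mathbb{I}$ is an isolated subgroup of $\mathbb{G}_u$ with $\mathbb{I}\cap\mathbb{G}=\mathbb{I}_w$. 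Applying \Cref{Prop:3.21} now to $u$ on $S$, the subgroup $\mathbb{I}$ produces a valuation $t=\pi\circ u$ on $S$ dominating $u$, where $\pi\colon\mathbb{G}_u\to\mathbb{G}_u/\mathbb{I}$ is the order-preserving quotient. For $x\in R$ one has $t(x)=\pi(v(x))$; since $\ker(\pi|_\mathbb{G})=\mathbb{I}\cap\mathbb{G}=\mathbb{I}_w=\ker h$, the map $\pi|_\mathbb{G}$ factors uniquely as $\bar\pi\circ h$ for an order embedding $\bar\pi\colon\mathbb{H}\hookrightarrow\mathbb{G}_u/\mathbb{I}$, so $t|_R=\bar\pi\circ w$ and $t$ extends $w$.

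For \emph{(iii)}, assuming $\mathbb{G}_u/\mathbb{G}$ is torsion, let $t'$ be any extension of $w$ to $S$ with $t'\gg u$, and let $\mathbb{I}'$ be its associated isolated subgroup of $\mathbb{G}_u$. The same factoring argument applied to $t'|_R\sim w$ forces $\mathbb{I}'\cap\mathbb{G}=\mathbb{I}_w$. For any $\alpha\in\mathbb{I}'$, the torsion hypothesis supplies $n\in\Z^+$ with $n\alpha\in\mathbb{G}$; since $\mathbb{I}'$ is a subgroup, $n\alpha\in\mathbb{I}'\cap\mathbb{G}=\mathbb{I}_w$, and from $-|n\alpha|\leq\alpha\leq|n\alpha|\in\mathbb{I}_w$ I conclude $\alpha\in\mathbb{I}$. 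Combined with the obvious inclusion $\mathbb{I}\subseteq\mathbb{I}'$, this gives $\mathbb{I}'=\mathbb{I}$ and hence $t'\sim t$. The step I expect to be the main obstacle is the clean verification that $\mathbb{I}\cap\mathbb{G}=\mathbb{I}_w$ (rather than merely $\supseteq$) and the careful tracking of order isomorphisms needed to identify $t|_R$ with $w$ in the sense of \Cref{REMAKR:6.1.6}; the torsion hypothesis is used precisely to guarantee that isolated subgroups of $\mathbb{G}_u$ are determined by their trace on $\mathbb{G}$, so uniqueness of $t$ follows from uniqueness of $\mathbb{I}$.
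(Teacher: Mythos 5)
Your argument is correct: part (i) follows immediately from the Extension Theorem (\Cref{Proposition:1:3:20}) together with $\Y(v)=\Y(w)$ from \Cref{Proposition:3.17}, and parts (ii)--(iii) are the standard isolated-subgroup argument via \Cref{Prop:3.21} — take the convex hull $\mathbb{I}$ of $\ker(h)$ inside $\mathbb{G}_u$, check $\mathbb{I}\cap\mathbb{G}=\ker(h)$ and that $\mathbb{I}$ is proper, and use the torsion hypothesis to see that an isolated subgroup of $\mathbb{G}_u$ is determined by its trace on $\mathbb{G}$. The paper supplies no details of its own, deferring entirely to the purely even case \cite[Proposition 3.5]{IB}; your proof is precisely that argument transported to the super setting, so the approaches coincide and you have in fact filled in what the paper leaves to the reference.
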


\begin{proof} This is similar to what is found in the purely even case \cite[Proposition 3.5]{IB}.
\end{proof}

\subsection{Integrality and extensions of valuations}

\begin{proposition}\label{Proposition:4.12}
\it    If $(A, \p)$ is a valuation pair of $R$, then $A$ is integrally closed in $R$. 
\end{proposition}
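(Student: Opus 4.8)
The plan is to show that any homogeneous element $x \in R$ integral over $A$ already lies in $A$, using the valuation $v$ determined by the pair $(A,\p)$ together with \Cref{Proposition:2.12}. First I would reduce to the homogeneous case: by \Cref{Def:1.12}~ii) it suffices to handle homogeneous $x$, and moreover, if $x$ is odd then $x \in \J_R \subseteq A$ automatically (recall $\p \supseteq \J_R$, hence $A \supseteq \J_R$), so only even $x$ requires work. For an even $x$ integral over $A$, \Cref{Proposition:2.12} gives a monic equation $x^n + a_{n-1}x^{n-1} + \cdots + a_1 x + a_0 = 0$ with all $a_i \in A$, i.e.\ with $v(a_i) \ge 0$ for every $i$.

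The core of the argument is the standard valuation-theoretic estimate. Suppose toward a contradiction that $x \notin A$, so $v(x) < 0$. Then for each $i \le n-1$ we have $v(a_i x^i) = v(a_i) + i\,v(x) \ge i\,v(x) > n\,v(x) = v(x^n)$ (using $v(x) < 0$ so that multiplying by a larger integer strictly decreases the value, and $v(a_i)\ge 0$). Hence the minimum of $\{v(a_i x^i) : 0 \le i \le n-1\}$ is attained, and it is strictly greater than $v(x^n)$; by the sharpened triangle inequality (\Cref{REMAKR:6.1.6}'s preceding remark on uniquely attained minima, or directly \Cref{Def:of:valuation}~iii) applied carefully), $v\bigl(\sum_{0\le i\le n-1} a_i x^i\bigr) \ge \min_i v(a_i x^i) > v(x^n)$. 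But $\sum_{0\le i\le n-1} a_i x^i = -x^n$, so $v(-x^n) = v(x^n)$, and we get $v(x^n) > v(x^n)$, a contradiction. Therefore $v(x) \ge 0$, i.e.\ $x \in A = A_v$.

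One detail to handle cleanly is the identification $A = A_v$ for the valuation $v$ determined by $(A,\p)$: this is exactly the content of \Cref{Theorem:3.4} together with \Cref{REMAKR:6.1.6}, which lets us pass freely between the valuation pair and its valuation. I expect the main (minor) obstacle to be the bookkeeping in the reduction to homogeneous elements and the verification that \Cref{Proposition:2.12} applies — it requires $x$ to commute with all of $A$, which holds since $x$ is even and $R$ is supercommutative (so $A\ev$ is central and odd elements of $A$ anticommute trivially with the even $x$, as $|x|=0$). Once that is in place, the valuation inequality does all the work, mirroring the classical proof that valuation rings are integrally closed. No genuinely new "super" phenomenon arises here beyond the observation that the odd part is swept into $A$ for free because $\J_R \subseteq A$.
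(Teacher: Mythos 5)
Your proposal is correct and follows essentially the same route as the paper's proof: reduce to homogeneous (in fact even) elements via $\J_R \subseteq \p \subseteq A$, invoke \Cref{Proposition:2.12} to get a monic integral equation, and derive the contradiction $v(x^n) > v(x^n)$ from the standard valuation estimate when $v(x) < 0$. The only cosmetic difference is that the paper restricts explicitly to $x \in R\ev \setminus R\od^2$ and separately notes the trivial-valuation case, but these are subsumed in your argument; also note that only the ordinary inequality $v(\sum y_i) \ge \min_i v(y_i)$ is needed, not the sharpened version with unique minimum.
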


\begin{proof} We only need to show that $\cl_R(A)\subseteq A$, as the inclusion $A\subseteq \cl_R(A)$ generally holds. Furthermore, since $(A,\p)$  is a valuation pair, we have the inclusions $\J_R\subseteq \p\subseteq A$. In particular, $R\od \subseteq A$. Therefore, to show that $A$ is integrally closed in $R$, it suffices to prove that any element $x \in R\ev \setminus R\od^2$ integral over $A$ is in $A$. Indeed, for any $x \in R\ev \setminus R\od^2$, by  \Cref{Proposition:2.12}, we have an equation
\[
x^n + \sum_{0 \leq i \leq n-1} a_i x^i = 0, \quad \text{ where } \quad a_0, a_1, \ldots, a_{n-1} \in A.
\] 

\noindent Let $v$ be a valuation on $R$ such that $(A, \p)=(A_v, \p_v)$. If $v$ is trivial, $A=R$ and we are done. Thus, suppose that $v$ is nontrivial. If $x\in A_v$, we have noting to show. For a contradiction, suppose $x\not\in A_v$. Then $v(x)<0$. Since $v(a_i)\geq0$ and $v(x^i)=iv(x)>nv(x)=v(x^n)$ for all $i=0, 1, \ldots, n-1$, we obtain that $v(a_ix^i)>v(x^n)$, for all $i=0, 1, \ldots, n-1$. On the other hand,  

\begin{align*}
    v(x^n)&=v\left(-\sum_{0\leq i\leq n-1}a_ix^i\right)
    =v\left(\sum_{0\leq i\leq n-1}a_ix^i\right)
    \geq\min_{0\leq i\leq n-1}v(a_ix^i)
    >v(x^n),  
\end{align*}

\noindent which is a contradiction. Thus, $x\in A_v$. For an arbitrary $x\in R$, note that if $x$ is integral over $A$, then $x\od$ and $x\ev$ are so. Thus, $x\ev, x\od\in A$ and since $A$ is a subsuperring of $R$, $x=x\ev+x\od\in A$, which completes the proof of the proposition.
\end{proof}
 
\begin{proposition}\label{prop.3.13.ct}
\it  Let $R$ be a subsuperring of $S$ and $v$ a valuation on $R$.  Suppose that $S$ is integral over $R$. If $w$ is a valuation on $S$ such that the pair $(A_w, \p_w)$ dominates $(A_v, \p_v)$,  then $w$ extends $v$ to $S$. In particular, every valuation $v$ on $R$ has extensions to $S$ if $S$ is integral over $R$, i.e., $\cl_S(R)=R$. 
\end{proposition}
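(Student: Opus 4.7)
The plan is to use \Cref{Proposition:1:3:19}(iii): combined with the given pair dominance $(A_v,\p_v)\preceq(A_w,\p_w)$, it suffices to show $\Y(v)\subseteq\Y(w)$ in order to conclude that $w$ extends $v$. As a warm-up I would first note that the reverse inclusion $\Y(w)\cap R\subseteq\Y(v)$ is automatic and needs no integrality hypothesis: given $x\in R$ with $\alpha:=v(x)\in\mathbb G$, surjectivity of $v$ yields $y\in R$ with $v(y)=-\alpha$, so $xy\in A_v\setminus\p_v\subseteq A_w\setminus\p_w$ by the pair dominance, and hence $w(x)+w(y)=w(xy)=0$ forces $w(x)\in\mathbb H$.

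For the essential direction $\Y(v)\subseteq\Y(w)$, I argue by contradiction: suppose $x\in\Y(v)$ has $w(x)=\beta\in\mathbb H$. Then $\beta>0$ because $x\in\p_v\subseteq\p_w$. Surjectivity of $w$ produces some element of $S$ of $w$-value $-\beta$, and passing to its even part (odd components have $w$-value $\infty$ by \Cref{P:v.sends.J.to.infinity}) yields $y\in S\ev$ with $w(y)=-\beta$. Since $y$ is even it is central, and being integral over $R$, \Cref{Proposition:2.12} provides a monic equation $y^n+a_{n-1}y^{n-1}+\cdots+a_0=0$ with $a_i\in R$. Multiplying through by $x^n$ and regrouping $x^ny^i=x^{n-i}(xy)^i$ rewrites this as
\[
(xy)^n+\sum_{i=0}^{n-1}b_i(xy)^i=0,\qquad b_i:=a_ix^{n-i}.
\]
The decisive observation is that $v(b_i)=v(a_i)+(n-i)v(x)=\infty$ for each $i<n$, since $v(x)=\infty$ and $n-i\geq 1$; hence $b_i\in\Y(v)\subseteq\p_w$, so $w(b_i)>0$. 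Because $w(xy)=\beta+(-\beta)=0$, applying $w$ to the displayed equation gives $0=w((xy)^n)\geq\min_{i<n}w(b_i)>0$, the sought contradiction.

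For the ``in particular'' clause I would apply Zorn's lemma to the $\preceq$-poset of pairs $(B,\q)$ of $S$ with $(A_v,\p_v)\preceq(B,\q)$, which is nonempty (it contains $(A_v,\p_v)$ itself, viewed inside $S$) and is clearly inductive. Any maximal element $(A_w,\p_w)$ of this sub-poset is $\preceq$-maximal in the ambient poset of all pairs of $S$ as well, by transitivity of $\preceq$, hence is a valuation pair of $S$ via the maximal-pair characterization proved just before \Cref{Example:3.10}; the associated valuation $w$ on $S$ supplied by \Cref{Theorem:3.4} dominates $(A_v,\p_v)$ by construction, and the first part of the proposition then delivers that $w$ extends $v$.

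The main obstacle is precisely the contradiction step above: everything rests on converting the hypothesis $v(x)=\infty$ into the assertion ``all $b_i$ lie in $\Y(v)$'' so that the strong form of the non-Archimedean inequality kicks in on $S$. The trick of multiplying the integrality relation of $y$ by $x^n$ to absorb the factor $x^{n-i}$ into each coefficient is exactly where the integrality of $S$ over $R$ and the membership $x\in\Y(v)$ meet, and it is also the only place the ``super'' structure plays a role: once \Cref{P:v.sends.J.to.infinity} lets us reduce to even $y$, the remainder of the argument proceeds formally as in the commutative template of \cite{IB, Man}.
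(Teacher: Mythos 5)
Your main argument is correct and is, at its core, the same as the paper's: both proofs take an integral equation over $R$ for a central even element of $S$, multiply it by the $n$-th power of an element of $\Y(v)$ so that the lower-order coefficients $b_i$ acquire $v$-value $\infty$, and then exploit $\Y(v)\subseteq\p_v\subseteq\p_w$. The difference is only in the endgame. The paper fixes $y\in\Y(v)\ev$ and an arbitrary $x\in S\ev$, concludes from the rewritten equation that $xy$ is integral over $A_w$, invokes \Cref{Proposition:4.12} to place $xy$ in $A_w$, and then upgrades the containment of $S\,\Y(v)$ in $A_w$ to containment in $\Y(w)$ (in effect via \Cref{Proposition:3.7} iii), since $S\,\Y(v)$ is a superideal of $S$). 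You instead pick $x\in\Y(v)$ with $w(x)=\beta\neq\infty$, manufacture $y\in S\ev$ with $w(y)=-\beta$ by surjectivity, and read off the immediate contradiction $0=w\bigl((xy)^n\bigr)\geq\min_i w(b_i)>0$. Your version is slightly tighter: it bypasses \Cref{Proposition:4.12} and the passage from ``lies in $A_w$'' to ``lies in $\Y(w)$'' entirely. The supporting steps (reduction to an even $y$ via \Cref{P:v.sends.J.to.infinity}, centrality of even elements, the monic equation from \Cref{Proposition:2.12}, and the sign $\beta>0$ from $\Y(v)\subseteq\p_v\subseteq\p_w$) all check out.

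The ``in particular'' clause is where there is a gap. Your Zorn argument starts from the claim that $(A_v,\p_v)$, viewed inside $S$, is itself an element of the poset of pairs of $S$. But a pair of $S$ requires the prime to be a prime ideal of the subsuperring containing $\J_S$, and $\J_S\not\subseteq A_v$ whenever $S\od\not\subseteq R$; so in general $(A_v,\p_v)$ does not belong to that poset, and the maximal-pair-equals-valuation-pair equivalence (whose proof uses $\J_S\subseteq\p$) cannot be applied to a maximal element of your sub-poset. The intended route is through \Cref{Proposition:1:3:20}: one verifies $R\cap S\,\Y(v)=\Y(v)$ --- a lying-over type statement that follows from integrality by essentially the same multiplication trick --- or, equivalently, one first enlarges $(A_v,\p_v)$ to the pair $\bigl(A_v+S\,\Y(v),\ \p\bigr)$ produced by \Cref{Super:Krull} as in the proof of that proposition, and only then applies Zorn. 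With that repair, the rest of your reduction (a maximal element of the sub-poset is maximal in the ambient poset by transitivity, hence a valuation pair dominating $(A_v,\p_v)$, hence an extension by the first part) goes through.
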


\begin{proof}
Since $(A_v, \p_v)\preceq(A_w, \p_w)$, by \Cref{Proposition:1:3:19} iii), it suffices to prove that   $\Y(v)\subseteq\Y(w)$. Consider $y\in \Y(v)\ev$ and $x\in S\ev$. Since $S$ is integral over $R$, there are $a_i\in R$ with 
    
    \[
    x^n+\sum_{0\leq i\leq n-1}a_ix^i=0,
    \]
    
    \noindent by \Cref{Proposition:2.12}. Then,
    
    \[
    0=\left(x^n+\sum_{0\leq i\leq n-1}a_ix^i\right)y^n=(xy)^n+\sum_{0\leq i\leq n-1}a_i y^{n-i}(x y)^i.
    \]
    
    If $i<n$, we have $v(a_iy^{n-i})=v(a_i)+v(y^{n-i})=v(a_i)+ \infty= \infty$, that is, $b_i:=a_iy^{n-i}$ belongs to $\Y(v)\subseteq A_w$. In other words, 

    \[
    0=(xy)^n+\sum_{0\leq i\leq n-1}b_i(x y)^i,\quad\text{ where }b_0, \ldots, b_{n-1}\in A_w.
    \]
    
    By  \Cref{Proposition:2.12}, $xy$ is integral over $A_w$. Since $A_w$ is integrally closed in $S$, by \Cref{Proposition:4.12}, the element  $yx\in A_w$. Thus, $S\ev(\Y(v))\ev\subseteq A_w$ and $S\ev(\Y(v))\od\subseteq\Y(w)$. Consequently, we see that  $S\ev\,\Y(v)=S\ev(\Y(v))\ev\oplus S\ev(\Y(v))\od\subseteq \Y(w)$.  The inclusions $S\od(\Y(v))\ev\subseteq \Y(w)$ and  $S\od(\Y(v))\od\subseteq \Y(w)$, imply $S\od\,\Y(v)=S\od(\Y(v))\ev\oplus S\od(\Y(v))\od\subseteq \Y(w)$ and thus $S\Y(v)=S\ev\,\Y(v)\oplus S\od\,\Y(v)\subseteq \Y(w)$. Since $\Y(v)\subseteq S\Y(v)$, we have $\Y(v)\subseteq\Y(w)$, meaning $w$ extends $v$ to $S$.
\end{proof} 

The following proposition superizes \cite[Theorem 10.4, p.73]{Mat}.

\begin{proposition}\label{Proposition.5.3.8}
 \it   Let $R\subseteq S\subseteq T$, where $T$ is a superfield and $S=\cl_T(R)$. Then, $S$ is contained in the intersection $W$ of the valuation superrings of $T$ containing $R$. Conversely, if $x\in T\ev \setminus T\od^2$ belongs to any valuation superring of $T$ containing $R$, then $x$ is integral over $R$.
\end{proposition}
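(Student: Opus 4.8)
The statement splits into two inclusions, and both should be adaptations of the classical argument (Matsumura, Theorem 10.4), with the usual ``super'' bookkeeping. I will handle the two directions separately.

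For the first inclusion $S \subseteq W$: since $S = \cl_T(R)$, it suffices to show that every element of $S$ lies in every valuation superring of $T$ containing $R$. Fix a valuation superring $A_v$ of $T$ with $R \subseteq A_v$, where $v$ is a valuation on $T$. Because $\J_T \subseteq \p_v \subseteq A_v$, every odd element and every element of $T\od^2$ already lies in $A_v$; so by \Cref{Def:1.12} ii) it is enough to treat a homogeneous even element $x \in S\ev \setminus T\od^2$ that is integral over $R$. By \Cref{Proposition:2.12} there is a relation $x^n + \sum_{0 \le i \le n-1} a_i x^i = 0$ with $a_i \in R \subseteq A_v$. Now run the standard valuation argument: if $v(x) < 0$ then $v(x^n) = n\,v(x) < i\,v(x) \le v(a_i) + i\,v(x) = v(a_i x^i)$ for all $i < n$, whence $v(x^n) = v\!\left(-\sum_{i<n} a_i x^i\right) \ge \min_{i<n} v(a_i x^i) > v(x^n)$, a contradiction. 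Hence $v(x) \ge 0$, i.e. $x \in A_v$. This is exactly the mechanism already used in the proof of \Cref{Proposition:4.12}, so I would simply cite that pattern.

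For the converse: suppose $x \in T\ev \setminus T\od^2$ lies in every valuation superring of $T$ containing $R$, and suppose for contradiction that $x$ is \emph{not} integral over $R$; I want to produce a valuation superring of $T$ that contains $R$ but not $x$. The classical move is to pass to $R' := R[x^{-1}]$ (which makes sense since $x$ is even and central, and $x$ is a nonzerodivisor in the superfield $T$) and observe that $x^{-1}$ is not a unit in $R'$: if it were, $x^{-1} u = 1$ with $u \in R[x^{-1}]$ would, after clearing denominators, give an integral dependence relation for $x$ over $R$, contradiction. Therefore $x^{-1}$ lies in some maximal ideal of $R'$, hence (using \Cref{rmk:prime-ideal} vi) and that maximal ideals of superrings are superideals) in a prime superideal $\q'$ of $R'$ with $x^{-1} \in \q'$. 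Now apply the machinery of this paper: the pair $(R', \q')$ satisfies condition i) of \Cref{Def:ValuationPair} (up to enlarging $\q'$ to contain $\J_{R'}$, which is automatic since $\J_{R'} \subseteq \q'$ for any prime superideal), and by the discussion of maximal pairs (Zorn's Lemma applied to $\mathcal{C}$ inside $T$, ordered by $\preceq$ as in \eqref{Eqn:Zorn}) there is a maximal pair $(A, \p) \succeq (R', \q')$ in $T$; maximal pairs are valuation pairs, so $(A, \p) = (A_v, \p_v)$ for a valuation $v$ on $T$. Then $R \subseteq R' \subseteq A_v$, while $x^{-1} \in \q' \subseteq \p = \p_v$ forces $v(x^{-1}) > 0$, i.e. $v(x) < 0$, so $x \notin A_v$ — contradicting the hypothesis. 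Hence $x$ is integral over $R$.

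The main obstacle is the converse direction, specifically the step ``$x^{-1}$ is not a unit in $R[x^{-1}]$ implies there is a valuation superring of $T$ containing $R$ but excluding $x$.'' In the commutative field case one would localize at a maximal ideal of $R[x^{-1}]$ and invoke the existence of a valuation ring dominating it; here I must instead route through \Cref{Def:ValuationPair}, the Zorn's-Lemma construction of maximal pairs, and the equivalence ``maximal pair $=$ valuation pair,'' and I must be careful that the prime containing $x^{-1}$ is a genuine prime superideal (guaranteed by \Cref{rmk:prime-ideal} v)--vi)) and that the pair $(R', \q')$ extends to one \emph{inside the ambient superfield} $T$ rather than some larger ring — which is fine because $\mathcal{C}$ can be taken relative to $T$. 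A secondary point of care: the reduction to homogeneous even $x \notin T\od^2$ in the first inclusion relies on \Cref{Proposition:2.12} requiring $x$ to be central, which is automatic for even elements of a supercommutative superring, and on the fact that $R\od \subseteq \p_v \subseteq A_v$ for any valuation superring. I do not expect genuine new difficulties beyond faithfully transcribing the classical proof through the paper's ``super'' scaffolding.
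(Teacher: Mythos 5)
Your proposal is correct and follows essentially the same route as the paper: for $S\subseteq W$ you (re-)establish that each valuation superring $A_v\supseteq R$ is integrally closed in $T$ (the paper simply cites \Cref{Proposition:4.12}, whereas you unwind the same $v(x)<0$ contradiction, which is redundant but harmless), and for the converse you pass to $R[x^{-1}]$, locate a maximal superideal containing $x^{-1}$, and enlarge $(R[x^{-1}],\m)$ to a maximal pair $(A,\p)$ — exactly the paper's argument.
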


\begin{proof} Let $\mathcal{A}$ be the collection of valuation superrings of $T$ containing $R$. If $A_v\in \mathcal{A}$, then $A_v$ is integrally closed by \Cref{Proposition:4.12}. Hence, $S\subseteq A_v$, so $S\subseteq W=\bigcap\limits_{V\in \,\mathcal{A}}V.$ Indeed, $S$ is the smallest integrally closed subsuperring of $T$ containing $R$. To show that $W\subseteq S$, we must see that if $x\in T\ev \setminus T\od^2$ belongs to any valuation superring of $T$ containing $R$, then $x$ is integral over $R$. Equivalently, we must show that if $x\in T$ is not integral over $R$, then there is some valuation superring $A_v$ of $T$ containing $R$ but not $x$. For this, let $x\in T\ev \setminus T\od^2$. Then, $x$ is invertible and $x^{-1}R[x^{-1}]$ is properly contained in $R[x^{-1}]$.
Let $\m$ be a maximal ideal of the superring $R[x^{-1}]$ containing $x^{-1}R[x^{-1}]$. If $(R[x^{-1}], \m)$ is maximal, we are done. Otherwise, let us consider $(A, \p)$ to be a maximal pair containing it. Note that $x^{-1}\in\p$, but $x\not\in A$, as claimed.   
\end{proof}

We say that the superring $R$ is a {\it strong superdomain} if $K(R)\simeq R_{\J_R}$. In this case, the invertible elements in $K(R)$ are precisely the nonzero divisors, $R\setminus \J_R$, which can be identified with $R\ev\setminus R\od^2$.

\begin{corollary}\label{cor:clous}
Let $R$ be a strong superdomain. Then, the  integral closure of $R$ can be decomposed as $\cl(R)=\bigcap\limits_{V\in\, \mathcal{A}}V$, where $\mathcal{A}$ is the collection of valuation superrings of $K(R)$ containing $R$. 
\end{corollary}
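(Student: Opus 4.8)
The plan is to derive this directly from \Cref{Proposition.5.3.8} applied to the chain $R\subseteq\cl(R)\subseteq K(R)$, i.e. with $T=K(R)$ and $S=\cl_{K(R)}(R)=\cl(R)$. The first thing to check is that $K(R)$ is a superfield: being a strong superdomain, $R$ is in particular a superdomain, so $\overline{R}$ is a domain, and from $K(R)\cong R_{\J_R}$ one gets that $\overline{K(R)}$ is the localization of $\overline{R}$ obtained by inverting all of $\overline{R}\setminus\{0\}$, i.e. the field of fractions of $\overline{R}$; hence $K(R)$ is a superfield. Thus \Cref{Proposition.5.3.8} applies verbatim and gives at once the inclusion $\cl(R)\subseteq W:=\bigcap_{V\in\mathcal{A}}V$, where $\mathcal{A}$ is the same collection of valuation superrings of $K(R)$ containing $R$.

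For the opposite inclusion the crucial observation is that $\J_{K(R)}$ lies in both $W$ and $\cl(R)$. For any $V=A_v\in\mathcal{A}$, \Cref{P:v.sends.J.to.infinity} gives $v(\J_{K(R)})=\{\infty\}$, so $\J_{K(R)}\subseteq A_v$; intersecting over $\mathcal{A}$ yields $\J_{K(R)}\subseteq W$. On the other hand, each product $\theta\eta$ of two odd elements of $K(R)$ is an even central element with $(\theta\eta)^2=0$ (by supercommutativity and $\theta^2=0$), hence belongs to $\cl(R)$ by the converse part of \Cref{Proposition:2.12}; and each odd $\theta\in K(R)$ satisfies $\theta M\subseteq M$ for the unitary, finitely generated $R$-supermodule $M=R+R\theta$ since $\theta^2=0$, so $\theta\in\cl(R)$ by \Cref{Def:1.12}. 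Summing, $\J_{K(R)}=K(R)\od^2\oplus K(R)\od\subseteq\cl(R)$.

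Now I would finish by showing $W\subseteq\cl(R)$. Since $W$ is a superring, every $w\in W$ splits as $w=w\ev+w\od$ with $w\ev,w\od\in W$. Here $w\od\in K(R)\od\subseteq\J_{K(R)}\subseteq\cl(R)$, so it suffices to handle $w\ev$. If $w\ev\in K(R)\od^2$ then $w\ev\in\J_{K(R)}\subseteq\cl(R)$; otherwise $w\ev\in K(R)\ev\setminus K(R)\od^2$, and as $w\ev\in W$ lies in every valuation superring of $K(R)$ containing $R$, the converse part of \Cref{Proposition.5.3.8} shows that $w\ev$ is integral over $R$, i.e. $w\ev\in\cl(R)$. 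In either case $w\in\cl(R)$; hence $W\subseteq\cl(R)$ and so $\cl(R)=\bigcap_{V\in\mathcal{A}}V$. The one nonroutine point is the middle paragraph: $\J_{K(R)}\subseteq\cl(R)$. Its even nilpotent part is immediate from \Cref{Proposition:2.12}, but that result is stated only for central elements, so for the odd generators one must verify the module condition of \Cref{Def:1.12} directly; everything else is a mechanical unwinding of \Cref{Proposition.5.3.8} together with the parity decomposition.
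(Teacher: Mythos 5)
Your reduction to \Cref{Proposition.5.3.8} via the observation that $K(R)$ is a superfield is the same route the paper takes, and you correctly isolate the one genuinely nonroutine step: handling the elements of $W$ lying in $\J_{K(R)}$, which the cited proposition does not touch. The trouble is that the patch you supply for the odd part is not valid. \Cref{Def:1.12} requires the witness supermodule $M$ to be generated by \emph{even} elements of the center of $S$; your proposed $M=R+R\theta$ has $\theta$ itself, an odd element, as a generator, so it simply does not qualify, and the inference ``$\theta M\subseteq M\Rightarrow\theta\in\cl(R)$'' is not licensed by the definition as stated.

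This is not a cosmetic issue: with the paper's definition, odd elements of $K(R)$ need not be integral over $R$, so the inclusion $\J_{K(R)}\subseteq\cl(R)$ on which your final step rests can genuinely fail. Take $R=\K[X\mid\theta]$, a strong superdomain with $K(R)=\K(X)[\theta]$, and consider $\theta/X\in K(R)\od$. If $M=Ry_1+\cdots+Ry_n$ with $y_i\in K(R)\ev=\K(X)$ even and central, not all zero, and $(\theta/X)M\subseteq M$, then comparing odd parts gives $(1/X)(\K[X]y_1+\cdots+\K[X]y_n)\subseteq\K[X]y_1+\cdots+\K[X]y_n$; by the determinant trick this forces $1/X$ to be integral over $\K[X]$, which is impossible since $\K[X]$ is integrally closed in $\K(X)$. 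Hence $\theta/X\notin\cl(R)$, while $\theta/X\in\J_{K(R)}\subseteq W$ since every $A_v$ contains $\J_{K(R)}$ by \Cref{P:v.sends.J.to.infinity}. So the inclusion $W\subseteq\cl(R)$ cannot be established by the route you propose. Note that the nilpotent even part is fine --- products $\theta\eta$ of odd elements are central and square-zero, so \Cref{Proposition:2.12} does apply there --- but the odd summand $K(R)\od$ is exactly where the argument breaks, and this is also the part on which the paper's one-line proof (which only invokes invertibility of $K(R)\ev\setminus K(R)\od^2$) is silent.
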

  
\begin{proof} This follows immediately from the fact that every element in $K(R)\ev \setminus K(R)\od^2$
is invertible.
\end{proof}


\section{Further properties of valuations}\label{SEC:6}

This section will explore notable properties of valuations on superrings, drawing inspiration from their commutative counterparts. Several of these properties are specifically adapted to extend key aspects of classical valuation theory to the $\Z_2$-graded setting, aiming to generalize concepts established for fields within the broader context of superfields.


\subsection{The inverse property for valuations}   

In this subsection, we examine the inverse property for a set of valuations. This technique was developed by Manis \cite{Man} as a means of addressing the issue of the absence of multiplicative inverses in commutative rings. The proofs of the results presented in this subsection are essentially analogous to those in the commutative case, and no modifications to the arguments are necessary to study its ``super'' version, so such proofs are omitted.   

\begin{definition}
    Let $R$ be a superring and $\L$ a set of valuations on $R$.  We say that $\L$ has the \textit{inverse property} if for every $x\in R$ there is an $x'\in R$ such that $v(xx')=0$ whenever $v\in \L$ and $v(x)\neq \infty$.
\end{definition}

\begin{example}\label{Exam:inv:property}
        Let $K$ be a superfield and $\L$ an arbitrary set of valuations on $K$. Let $x\in K$ and $v\in\L$ such that $v(x)\neq \infty$. Thus, $x\in K \setminus \J_K$ and moreover, $x$ is invertible. It follows that $\L$ has the inverse property. Note that if $\L$ is a singleton and $K$ is not necessarily a superfield, then $\L$ has the  inverse property.  \qed 
\end{example}
 
It is natural to ask ``Under what conditions does a set containing a pair of valuations satisfy the inverse property?'' The following proposition gives necessary and sufficient conditions for a positive answer to this question.

\begin{proposition}\label{P6:6:4} Let $v$ and $w$ be valuations on $R$ such that $\p_v\subseteq\p_w$. Then, $\L=\{v, w\}$ has the inverse property if and only if $A_w\subseteq A_v\cup \Y(w)$.  \qed 
\end{proposition}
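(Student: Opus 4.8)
The statement is an ``if and only if'', so I would prove the two implications separately, using the valuation-theoretic characterizations already available: $A_v=\{x\mid v(x)\ge 0\}$, $\p_v=\{x\mid v(x)>0\}$, $\Y(v)=v^{-1}(\infty)$, and the elementary rules $v(xy)=v(x)+v(y)$, $v(x+y)\ge\min$ with equality when the minimum is attained uniquely. The hypothesis $\p_v\subseteq\p_w$ will be used throughout, often in the contrapositive form: $v(x)\le 0$ whenever $w(x)\le 0$, i.e. if $x\notin A_w$ then $x\notin A_v$ (equivalently $R\setminus A_w\subseteq R\setminus A_v$, which is exactly $A_v\subseteq A_w$ failing — no, rather it gives control of signs).

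\textbf{($\Rightarrow$) Assume $\L=\{v,w\}$ has the inverse property; show $A_w\subseteq A_v\cup\Y(w)$.} Take $x\in A_w$ with $x\notin\Y(w)$, so $w(x)\ne\infty$ and $w(x)\ge 0$. By the inverse property there is $x'\in R$ with $v(xx')=0$ and $w(xx')=0$ (both $v$ and $w$ are in $\L$; for the $v$-condition we must check $v(x)\ne\infty$, which I will verify — if $v(x)=\infty$ then $x\in\Y(v)\subseteq\p_v\subseteq\p_w$, contradicting $w(x)\ne\infty$, wait $\p_w\subseteq A_w$ so that's consistent; instead note $x\in\Y(v)$ would give $v(xx')=\infty\ne 0$, so indeed $v(x)\ne\infty$). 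From $w(x)+w(x')=0$ and $w(x)\ge 0$ we get $w(x')\le 0$; hence $v(x')\le 0$ by the contrapositive of $\p_v\subseteq\p_w$ (more precisely: $w(x')\le 0$ means $x'\notin\p_w$, so $x'\notin\p_v$, so $v(x')\le 0$). Then $v(x)=v(xx')-v(x')=0-v(x')\ge 0$, so $x\in A_v$. This gives $A_w\setminus\Y(w)\subseteq A_v$, which is the claim.

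\textbf{($\Leftarrow$) Assume $A_w\subseteq A_v\cup\Y(w)$; show $\L$ has the inverse property.} Fix $x\in R$; I need $x'\in R$ with $v(xx')=0$ whenever $v\in\L$ and $v(x)\ne\infty$, i.e. simultaneously handling whichever of $v,w$ is finite on $x$. The natural candidate for $x'$ comes from the valuation-pair structure: if $x\notin A_w$, use the defining property of the valuation pair $(A_w,\p_w)$ to get $x'\in\p_w\ev$ with $xx'\in A_w\setminus\p_w$, i.e. $w(xx')=0$; then since $xx'\notin\Y(w)$ the hypothesis forces $xx'\in A_v$, and one further adjusts (or observes) to pin $v(xx')$ down to $0$ — here I expect to need a short case analysis on $v(x)$ and to possibly replace $x'$ using the valuation pair $(A_v,\p_v)$ as well, then combine. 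If $x\in A_w$, one treats the cases $x\in\Y(w)$ (then only the $v$-condition is active, handled by $(A_v,\p_v)$) and $x\in A_w\setminus\Y(w)$ (then $w(x)=0$ is already fine, adjust for $v$). The bookkeeping of choosing a single $x'$ that works for both valuations at once is the technical heart.

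\textbf{Main obstacle.} The forward direction is essentially a clean two-line sign computation. The reverse direction is where the work lies: producing \emph{one} element $x'$ that kills $x$ for both $v$ and $w$ simultaneously, using only the valuation-pair property (which a priori gives different auxiliary elements for $(A_v,\p_v)$ and $(A_w,\p_w)$) together with the containment $A_w\subseteq A_v\cup\Y(w)$. I expect this to reduce to showing that the $x'$ furnished by $(A_w,\p_w)$ automatically works for $v$ — precisely because $xx'\notin\Y(w)$ lands us in $A_v$, and a symmetric argument (or re-running the valuation-pair property for $v$ and comparing) forces $v(xx')=0$ rather than merely $\ge 0$. Carrying out that comparison carefully, across the cases $x\in A_w$ vs. $x\notin A_w$, is the step to be careful about; everything else follows the commutative template (cf. \cite{Man}).
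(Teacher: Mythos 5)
The paper actually omits the proof of this proposition (it sits in the subsection where all proofs are declared ``essentially analogous to those in the commutative case'' and dropped), so I evaluate your attempt on its own terms rather than against a paper proof. Your forward direction is essentially correct, but the parenthetical attempt to rule out $v(x)=\infty$ is faulty both times: the second try, ``$x\in\Y(v)$ would give $v(xx')=\infty\ne 0$,'' is circular, since the inverse property only asserts $v(xx')=0$ \emph{when} $v(x)\ne\infty$ and therefore cannot be invoked to contradict $v(x)=\infty$. Fortunately the worry is moot: if $v(x)=\infty$ then $x\in\Y(v)\subseteq\p_v\subseteq A_v$ and the desired conclusion $x\in A_v$ holds immediately. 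The sign computation in the remaining case ($v(x)\ne\infty$) is clean.

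The reverse direction, however, has a genuine gap: you announce a case analysis as ``the technical heart'' but never carry it out, and your sketch of the subcase $x\in A_w\setminus\Y(w)$ silently assumes $w(x)=0$, dropping the possibility $w(x)>0$ (i.e.\ $x\in\p_w\setminus\Y(w)$). The missing idea that collapses all the bookkeeping is a one-line lemma: under the hypotheses $\p_v\subseteq\p_w$ and $A_w\subseteq A_v\cup\Y(w)$, one has $w(y)=0\implies v(y)=0$. Indeed, $w(y)=0$ gives $y\in A_w\setminus\p_w$; since $\Y(w)\subseteq\p_w$ this forces $y\notin\Y(w)$, hence $y\in A_v$ by the containment hypothesis, so $v(y)\ge 0$; and $y\notin\p_w\supseteq\p_v$ gives $v(y)\le 0$; thus $v(y)=0$. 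With this lemma, the inverse property follows from surjectivity alone: if $w(x)\ne\infty$, pick $x'$ with $w(x')=-w(x)$ (surjectivity of $w$), so $w(xx')=0$, and the lemma gives $v(xx')=0$ -- which also shows $v(x)\ne\infty$ automatically, so both conditions are met; if $w(x)=\infty$ but $v(x)\ne\infty$, pick $x'$ with $v(x')=-v(x)$ and the $w$-condition is vacuous; if both are $\infty$, take $x'=1$. In particular you never need the defining condition ii) of a valuation pair that your sketch leans on -- surjectivity plus the lemma suffice.
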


The inverse property may be true for sets of the form $\L=\{v, w\}$ that are outside the scope of  \Cref{P6:6:4}, as the following example shows.

\begin{example}\label{other:example} Let $\Z$ be the integers, $p$ and $q$ primes with $p\neq q$. Let $$R=\Z[\,x, x^{-1}\mid \theta_1, \ldots,  \theta_N].$$ Thus, $\overline{R}=\Z[\,x, x^{-1}]$ has some valuation $v_0:\overline{R}\to\mathbb{G}\cup\{ \infty\}$ with valuation pair 

\[
(A, \p)=(\Z[x]+p\overline{R}, xA+p\overline{R}).
\]

\noindent There is a second valuation $w_0:\overline{R}\to\mathbb{H}\cup\{ \infty\}$ with valuation pair 

\[
(B, \q)=(\Z[x^{-1}]+q\overline{R}, x^{-1}B+q\overline{R}).
\]

(See \cite[Example 2 in p.28]{IB}.) Consider the valuations 

\begin{eqnarray*}
    v:R&\mapsto&\mathbb{G}\cup\{ \infty\}\\
    x&\mapsto& v_0(x). 
\end{eqnarray*}

\noindent and

\begin{eqnarray*}
    w:R&\mapsto&\mathbb{H}\cup\{ \infty\}\\
    x&\mapsto&w_0(x).
\end{eqnarray*}

Then, $\Y(v)=p\overline{R}\oplus\J_R$ and $\Y(w)=q\overline{R}\oplus\J_R$. We claim that $\L=\{v, w\}$ has the inverse property. Indeed, if $x\in R$ is such that $v(x)\neq \infty$ and $w(x)\neq \infty$, then $v(x)=v(\overline{x})$ with $\overline{x}\in\overline{R}$. Since $\L=\{v_0, w_0\}$ certainly has the inverse property, there is some $x'\in R$ with $v(xx')=v_0(xx')=0$ and $w(xx)=w_0(xx')=0$, as desired. \qed 
\end{example}

One might expect that if $\L=\{v, w\}$ has the inverse property, then $v$ and $w$ have the same support. However, in \Cref{other:example}, $\Y(v)\neq\Y(w)$. In the following example we see that $\Y(v)=\Y(w)$ does not imply the inverse property for $\L=\{v, w\}$. 

\begin{example}
    Let $R=\Z[\,x, x^{-1}\mid\theta_1, \ldots, \theta_N]$. Consider the valuation $v_0:\overline{R}\to\mathbb{G}\cup\{ \infty\}$ induced by the valuation pair $(A, \p)=(\Z[x], x\Z[x])$ and the valuation  $w_0:\overline{R}\to\mathbb{H}\cup\{ \infty\}$ induced by the valuation pair $(B, \q)=(\Z[x^{-1}], x^{-1}\Z[x^{-1}])$ \cite[Example 3 in p.28]{IB}. If $v$ and $w$ are respectively the  valuations on $R$ corresponding to $v_0$ and $w_0$, then $v$ and $w$ have the same support, namely, $\J_R$. However, $\L_0=\{v_0, w_0\}$ does not satisfies the inverse property, and neither $\L=\{v, w\}$. 
\end{example}

\label{Proposition:5.3.4}\begin{proposition} \it Let $S$ be an extension of $R$ such that $v$ has extensions to $S$. Let $\a\subseteq S$ be a superideal and $\L$ a set of valuations on $S$ extending $v$ from $R$ to $S$. Assume that  

    \[
    \a\subseteq\bigcap_{w\in\L}\Y(w) \quad\text{ and }\quad \a\cap R=\Y(v)\supseteq\J_R.
    \]

    \noindent If $S\ev/\a\ev$ is algebraic over $R/\Y(v)$, then $\L$ has the inverse property. Furthermore, if $$\a=\bigcap\limits_{w\in\L} \Y(w).$$ If $S\ev/\a\ev$ is algebraic over $R/R\cap\a$, then   $\mathbb{H}/\mathbb{G}$ is torsion for all $(w, \mathbb{H})$ in $\L$. \qed 
\end{proposition}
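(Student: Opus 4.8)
The statement has two halves, both asserting that a certain algebraicity hypothesis forces a ``smallness'' conclusion, and the natural strategy is to reduce each to the corresponding purely even fact by passing to superreductions. First I would set up the common notation: given $w\in\L$ with value group $\mathbb{H}$, by hypothesis $w$ extends $v$, so $\Y(v)\subseteq\Y(w)$ and $(A_v,\p_v)\preceq(A_w,\p_w)$; since $\J_R\subseteq\Y(v)$ and $\J_S\subseteq\Y(w)$ (by \Cref{P:v.sends.J.to.infinity}), $w$ factors through $\overline{S}=S/\J_S$ and induces a valuation $\overline{w}$ on the subquotient $\overline{S}\ev$-algebra, and likewise $v$ induces $\overline{v}$ on $R/\J_R$ with the same value group. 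The point of the odd variables vanishing is precisely that everything in sight only depends on even data modulo the canonical superideals, so the whole problem is genuinely a statement about the commutative rings $S\ev/\a\ev$ and $R/\Y(v)$ (note $\Y(v)\cap R\ev=\Y(v)\ev$ and $R/\Y(v)\cong R\ev/\Y(v)\ev$ since prime ideals have the shape $\p\ev\oplus R\od$).

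\textbf{Inverse property.} For the first assertion, fix $x\in S$ with $v'(x)\neq\infty$ for the (single) relevant comparison — more precisely, fix $x\in S$ and I must produce $x'\in S$ with $w(xx')=0$ for every $w\in\L$ with $w(x)\neq\infty$. Write $x=x\ev+x\od$; since $x\od\in\J_S\subseteq\bigcap_w\Y(w)\subseteq\Y(w)$ for all $w$, we have $w(x)=w(x\ev)$, so it suffices to handle $x\ev$, i.e. we may assume $x\in S\ev$. Now look at the image $\bar x$ of $x$ in $S\ev/\a\ev$. If $w(x)\neq\infty$ then $\bar x\neq 0$ in $S\ev/\a\ev$ (because $\a\ev\subseteq\Y(w)\ev$), and by the algebraicity hypothesis $\bar x$ satisfies a polynomial relation over the domain $R/\Y(v)$; clearing denominators and using that $\a\cap R=\Y(v)$, one lifts this to a relation $x^n b_n + b_{n-1}x^{n-1}+\cdots+b_0 \in \a\ev$ with $b_i\in R\ev$, $b_n\notin\Y(v)$, and $b_0,\dots$ chosen so that after localizing one can solve for an inverse: the standard Manis trick (as in the commutative case, e.g. the proof of \Cref{P6:6:4} and \cite[Chapter 3]{IB}) produces from such a relation an explicit $x'$ built out of the $b_i$ and powers of $x$ for which $w(xx')=0$ simultaneously for all $w\in\L$ with $w(x)\neq\infty$, because each such $w$ kills $\a$ and is non-negative on $R$, forcing $w(b_n)=0$. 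The key input is that a single polynomial relation works uniformly for all $w\in\L$, which is exactly what $\a\subseteq\bigcap_w\Y(w)$ and $\a\cap R=\Y(v)$ guarantee.

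\textbf{Torsion of $\mathbb{H}/\mathbb{G}$.} For the second assertion, now assume $\a=\bigcap_{w\in\L}\Y(w)$ and that $S\ev/\a\ev$ is algebraic over $R/(R\cap\a)$; fix $(w,\mathbb{H})\in\L$ and $\alpha\in\mathbb{H}$. Choose $x\in S$ with $w(x)=\alpha$; again reduce to $x\in S\ev$ and pass to $\bar x\in S\ev/\a\ev$, which is nonzero (if $\alpha\neq\infty$; the case $\alpha=\infty$ is vacuous since then $\alpha$ is not in $\mathbb{H}$). Algebraicity gives $c_n\bar x^n+\cdots+c_0=0$ with $c_i$ images of elements of $R\ev$, not all zero; lifting, $\sum_i c_i x^i\in\a\ev\subseteq\Y(w)$, so $w\big(\sum c_i x^i\big)=\infty$. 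Since $w|_R$ takes values in $\mathbb{G}=v(R)$, each finite $w(c_i)\in\mathbb{G}$, and the ultrametric equality case (distinct valuations of summands forbid cancellation, cf. the remark after \Cref{Def:of:valuation}) forces at least two indices $i<j$ with $w(c_i)+i\alpha = w(c_j)+j\alpha$ both minimal; hence $(j-i)\alpha = w(c_i)-w(c_j)\in\mathbb{G}$, so $\alpha$ has a positive integer multiple in $\mathbb{G}$. This proves $\mathbb{H}/\mathbb{G}$ is torsion.

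\textbf{Main obstacle.} The only genuinely delicate point is the ``uniformity'' in the inverse-property half: one must choose the auxiliary element $x'$ from a single algebraic relation in such a way that it works for every $w\in\L$ with $w(x)\neq\infty$ at once, and verify that those $w$ which do send $x$ to $\infty$ are simply irrelevant. This is where $\a\cap R=\Y(v)$ is used to ensure the leading (or some) coefficient lifts to something outside every $\Y(w)$, $w\in\L$. Everything else is a routine transfer of the commutative arguments in \cite{Man,IB} through the superreduction, with \Cref{P:v.sends.J.to.infinity} doing the work of discarding odd parts; I would therefore present the commutative reductions explicitly and then cite the classical arguments for the remaining bookkeeping.
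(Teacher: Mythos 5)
The paper does not actually supply a proof of this proposition: the subsection opens with a blanket statement that the arguments are ``essentially analogous to those in the commutative case, and no modifications to the arguments are necessary,'' and all proofs in the subsection are omitted. Your overall strategy---kill odd parts via \Cref{P:v.sends.J.to.infinity}, identify $R/\Y(v)$ with $R\ev/\Y(v)\ev$ using the shape $\p=\p\ev\oplus R\od$ of primes, and then defer to the commutative arguments of Manis/Irlbeck---is precisely the reduction the paper has in mind, so in that sense you have taken the same route. Your torsion half is correct and complete: the lift $\sum c_i x^i\in\a\ev\subseteq\Y(w)$ together with the ultrametric strict-inequality criterion forces the minimum of $\{w(c_i)+i\alpha\}$ to be attained at two indices $i<j$, giving $(j-i)\alpha\in\mathbb{G}$; and the degenerate case where the minimum is $\infty$ is ruled out because $\Y(w)\cap R=\Y(v)$, so not all $c_i$ can map to $\infty$.

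There is, however, a genuine slip in the inverse-property half. You write that ``each such $w$ \ldots is non-negative on $R$, forcing $w(b_n)=0$.'' This is false: an extension $w\gg v$ satisfies only $A_v\subseteq A_w$ and $\p_v=A_v\cap\p_w$, not $R\subseteq A_w$, so $w$ can certainly take negative values on elements of $R$ lying outside $A_v$. Moreover even if $w$ were non-negative on $R$, one would only obtain $w(b_n)\geq 0$, not $w(b_n)=0$. What the hypotheses do give you is the weaker but sufficient fact that $b_n\notin\Y(v)=R\cap\Y(w)$, hence $w(b_n)<\infty$; the classical Manis construction of $x'$ then uses the surjectivity of $v$ and the singleton inverse property on $R$ to turn a coefficient with \emph{finite} $w$-value (not necessarily \emph{zero} $w$-value) into the required inverse. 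As you explicitly defer to the ``standard Manis trick'' for the remaining bookkeeping, the error is localized rather than structural, but as stated the justification is wrong and should be repaired along the lines indicated.
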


\subsection{Approximation theorems} 


A ring $R$ is said to have a \textit{large Jacobson radical} $\mathfrak{r}_R$ if every prime ideal $\p$ contained in $\mathfrak{r}_R$ is maximal. It is well known that (Manis) valuations behave well in commutative rings with such property \cite[p.1]{KV}. For example, approximation theorems for valuations in such contexts can be proved without additional assumptions \cite{Ara}. It would be interesting to study the behavior of valuations in superrings with a large Jacobson radical. This problem is especially interesting in the case when $R$ is not a superdomain, since if $R$ is a superdomain with a large Jacobson radical, it follows that $R$ is a superfield. This task seems to be tedious, but from what has been studied so far in this paper, we can be sure that it should be possible to carry it out. However, let us think about reducing ourselves a little more, in the interest of making the work simpler and placing our work as close as possible to a geometric context. At first, we are interested in seeing what happens with valuations in superfields, especially when discrete valuations are involved so that we can create an appropriate algebraic environment in which someone can somehow understand the abstract notion of a supercurve. There are three important points about superrings and valuations that we should consider for this task. First of all, we want to deal with split superrings, and particularly, on split superfields. Any set of valuations on a superfield has the inverse property, as we quoted above. On the other hand, if we are dealing with discrete valuations, the quotients of the form $\Z/n\Z$ are torsion. In accordance with this, it will suffice to study the following situation:
\medskip

\textbf{Assumption.} Let $R$ and $S$ be superrings such that $S$ is an extension of $R$. We consider a valuation $(v, \mathbb{G})$ on $R$ with extensions to $S$ and $\L$ is a set of extensions of $v$ to $S$ with the inverse property such that the quotient group $\mathbb{H}/\mathbb{G}$ is torsion for every valuation  $(w, \mathbb{H})$ belonging to $\L$. 

Under the previous conventions, approximation theorems have already been proved by Manis in the commutative context \cite{Man}. Also, J. Gräter \cite{Gra} proved approximation theorems without assuming that the quotient groups $\mathbb{H}/\mathbb{G}$ are torsion.
\medskip

In light of the outcomes presented in the preceding subsection and the premises established in the present subsection, the computational procedures outlined in the works of \cite{IB} and \cite{Man} offer approximation theorems for the context of a superrings, with minimal alterations. The statements of such theorems are provided below, without the inclusion of a proof. 

\begin{theorem} Let $(w_1, \mathbb{H}_1), \ldots, (w_n, \mathbb{H}_n)\in \L$ be independent for all $i\neq j$.

\begin{itemize}
    \item {\rm (Approximation Theorem)}  For all $(\alpha_1, \ldots, \alpha_n)\in\mathbb{H}_1\times\cdots\times\mathbb{H}_n$, there is some $x\in S$ with $w_i(x)=\alpha_i$ for all $i$.
    \item {\rm (Strong Approximation Theorem)}  If for $a_i\in S$ we have $w_i(a_i)\neq \infty$ with $i\geq 1$, then there is some $x\in S$ such that 

\[
w_i(x)=w_i(a_i)<w_i(x-a_i)\quad \text{  for all }i=1, 2, \ldots, n.
\]\qed 
\end{itemize}
\end{theorem}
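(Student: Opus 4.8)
The plan is to reduce everything to the classical Manis case by working componentwise and passing through the supports. The key observations, in order: first, by \Cref{P:v.sends.J.to.infinity} every valuation kills $\J_S$, so for each $w_i$ the value $w_i(x)$ depends only on the image of $x$ in $S/\Y(w_i)$, and in fact already only on the even part of $x$ modulo $\Y(w_i)\ev$ (using that $w_i$ is surjective and $\Y(w_i)\od = S\od$). Thus I would first replace each $w_i$ by the induced valuation $\hat{w_i}$ on the field of fractions $k(\Y(w_i))$, reducing the problem to a family of valuations on fields — a setting where the classical approximation theorem (e.g.\ \cite[Chapter VI]{SZ}, or Manis's commutative version \cite{Man}) applies directly. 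The hypothesis that the $w_i$ are pairwise independent transfers to this reduction by \Cref{Proposition:3.17} and the characterization of dependence in terms of a common dominating valuation.

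For the \emph{Approximation Theorem}: given $(\alpha_1,\dots,\alpha_n) \in \mathbb{H}_1 \times \cdots \times \mathbb{H}_n$, I would first solve the approximation problem in $K(S\ev)$ (or the appropriate common fraction field) using the classical theorem for the valuations $\hat{w_1},\dots,\hat{w_n}$, obtaining an element $\bar{x}$ with $\hat{w_i}(\bar{x}) = \alpha_i$ for all $i$. Then I would use the inverse property of $\L$ together with the ``Assumption'' — in particular the fact that $S$ is an extension of $R$ and the quotients $\mathbb{H}_i/\mathbb{G}$ are torsion — to lift $\bar{x}$ back to an honest element $x \in S$ with $w_i(x) = \alpha_i$. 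The inverse property is precisely what lets one clear denominators: for any element $y$ of the fraction field there is $y' \in S$ with $w_i(y y') = 0$ for all $i$ in the support of interest, so one can arrange representatives inside $S$ itself. This is the step where the ``super'' nature genuinely needs care: one must check that adding odd elements does not disturb any of the values, which is immediate from $w_i(\J_S) = \{\infty\}$ and the ultrametric inequality, so an even lift suffices and then any odd correction is harmless.

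For the \emph{Strong Approximation Theorem}: assuming $w_i(a_i) \neq \infty$ for each $i$, I would apply the (weak) Approximation Theorem just established to the \emph{differences} — i.e.\ choose $x$ so that $w_i(x - a_i) > w_i(a_i)$ for all $i$ simultaneously, which forces $w_i(x) = w_i(a_i)$ by the sharpened triangle inequality (\Cref{REMAKR:6.1.6} and the remark following \Cref{Def:of:valuation}, equality when the minimum is uniquely attained). Concretely: pick target values $\beta_i$ with $\beta_i > w_i(a_i)$, use weak approximation to get $y \in S$ with $w_i(y) = \beta_i$, and then a further inverse-property adjustment to combine $y$ with the $a_i$ into a single $x$; pairwise independence is what guarantees no ``interference'' between the conditions at different indices. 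The main obstacle I anticipate is bookkeeping around the supports $\Y(w_i)$: if the $w_i$ have distinct supports one cannot work in a single fraction field, and one must instead run the classical argument in $S$ localized appropriately, checking at each stage that the chosen auxiliary elements lie outside the relevant prime ideals. This is exactly the kind of technical care already exercised in \cite{IB} and \cite{Man}, and since (by the Assumption) we have the inverse property and torsion quotients in hand, their arguments go through with only the cosmetic changes of replacing ``ring'' by ``superring'' and invoking \Cref{P:v.sends.J.to.infinity} wherever odd elements appear — which is why the statement is recorded here without a detailed proof.
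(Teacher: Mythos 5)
Your proposal takes essentially the same approach as the paper, which in fact records no detailed proof at all: the text simply observes that the inverse property and torsion-quotient hypotheses (the ``Assumption'') are in place, that $w_i(\J_S)=\{\infty\}$ neutralizes all odd contributions, and that the arguments of Manis \cite{Man} and Irlbeck \cite{IB} for commutative rings then carry over with only cosmetic changes. Your fleshed-out reduction via the induced valuations $\hat{w_i}$ and your remark that even lifts suffice are consistent with this; the one place your sketch is slightly imprecise is the Strong Approximation step, where one cannot directly ``choose $x$ with $w_i(x-a_i)>w_i(a_i)$'' by invoking weak approximation (since the $a_i$ differ across indices), but rather must run the Manis/Irlbeck inverse-property construction of auxiliary elements idempotent-like at each $w_i$ — which is what the paper implicitly delegates to those sources anyway.
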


\subsection{The fundamental inequality}  

The purpose of this subsection is to argue the truth of the subsequent inequality concerning the ramification index $e_v$, the relative degree $f_v$, and the extension degree $n_\L$ in the supersetting:

\begin{equation*}
        \sum_{1\leq i\leq n} e_{v_i}f_{v_i}\leq n_\L.
    \end{equation*}

Similar to the case of approximation theorems, the nature of valuation rings and the prime ideal property of $\Y(v)$ in $R$ imply that the proof of this inequality closely follows its commutative counterpart, requiring only minor modifications. Such a proof is Manis' adaptation (\cite[Proposition 3.23]{Man}) to the case of commutative rings of the comparison theorem proved in \cite[\S11, Theorem 19]{SZ}. The proof will not be included here; instead, the necessary concepts will be defined. 

\begin{definition} 
    Let $A$ be a domain, $\Bbbk$ its field of fractions and let $B$ be an extension of $A$. Note that $\U^{-1}B=B\otimes_A \U^{-1}A=B\otimes_A\K$, with $\U=\Bbbk^*$, is endowed with a natural structure of $\Bbbk$-module. Thus, $\U^{-1}B$ is a $\Bbbk$-vector space. The \textit{rank of $B$ over $A$} is defined as the number 

    \begin{equation}
        \rank(B; A):=\dim_\Bbbk(\U^{-1}B).
    \end{equation}   
\end{definition}

Now we may define the ingredients we require to state the fundamental inequality. 
 \begin{definition}\label{Def:6.8.4}
     Let $S$ be an extension of $R$ and $v$ a valuation on $R$ with extensions to $S$. Consider the set $\L_v=\{w\mid w\text{ extends $v$ to }S\}$ and for any $\L\subseteq\L_v$ define
     
     \[
     \Y(\L):=\bigcap_{w\in\L}\Y(w).
     \]
     
     \noindent Since $\Y(v)$ is a prime ideal of $R$, then $R/\Y(v)$ is an integral domain and the (commutative) ring $S/\Y({\L})$ is an extension of $R/\Y(v)$. Thus, we can consider the number     
    
    \begin{equation}
        n_{\L}:=\rank(S/\Y({\L}); R/\Y(v)).
    \end{equation}

 \noindent For each $w\in\L$, \textit{the relative degree of $w$ with respect to} $v$ is the number 

    \begin{equation}
        f_w:=\rank(A_w/\p_w; A_v/\p_v).
    \end{equation}
   \end{definition}

    \noindent The index of the group $\mathbb{G}$ in $\mathbb{H}$, 

\begin{equation}
    e_w:=(\mathbb{G}:\mathbb{H}),
\end{equation}

\noindent  is called \textit{the reduced ramification index of $w$ with respect to} $v$.
\\

The following proposition can be derived as \cite[Proposition 3.23]{Man}.

\begin{proposition}
\it    Suppose that $S$ is an extension of $R$, $\L\subseteq\L_v$ and $n_{\L}<\infty$ and $\p_v\not\subseteq\p_w$ whenever $v, w$ are independent elements in $\L$. Then, if $v_1, \ldots, v_n\in\L$ are all distinct, one has 

    \begin{equation}
        \sum_{1\leq i\leq n}e_{v_i}f_{v_i}\leq n_\L.
    \end{equation}

    \noindent In particular, $\L$ is finite. \qed 
\end{proposition}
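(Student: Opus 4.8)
The statement to be proved is the fundamental inequality $\sum_{1\le i\le n} e_{v_i}f_{v_i}\le n_\L$ under the hypotheses that $S$ extends $R$, that $\L\subseteq\L_v$, that $n_\L<\infty$, and that $\p_v\not\subseteq\p_w$ whenever $v,w$ are independent in $\L$; and then to conclude $|\L|<\infty$. The plan is to reduce everything to the commutative situation obtained by passing to $R/\Y(v)$ and $S/\Y(\L)$, where the classical comparison theorem \cite[\S11, Theorem 19]{SZ} in Manis' formulation \cite[Proposition 3.23]{Man} applies almost verbatim. First I would invoke \Cref{Proposition:3.7} i) to see that $\Y(v)$ is a prime ideal of $R$, so $R/\Y(v)$ is a domain and $S/\Y(\L)$ is an extension of it (as already recorded in \Cref{Def:6.8.4}); this is what makes $n_\L$, $f_{v_i}$, and $e_{v_i}$ well-defined finite-type invariants over a domain rather than over a superring.

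The heart of the argument is the same linear-algebra/independence computation as in the even case. Fix distinct $v_1,\dots,v_n\in\L$. For each $i$ choose a $\K_0$-basis (where $\K_0$ is the field of fractions of $R/\Y(v)$) of a sufficiently large finite piece realizing $e_{v_i}f_{v_i}$: concretely, pick elements of $S$ whose $w_i$-values represent $e_{v_i}$ distinct cosets of $\mathbb{G}$ in $\mathbb{H}_i$, and multiply by lifts of an $A_v/\p_v$-basis of $A_{v_i}/\p_{v_i}$ of size $f_{v_i}$; this produces $\sum_i e_{v_i}f_{v_i}$ elements of $S$. One then shows these elements are linearly independent over $\K_0$ in $\U^{-1}(S/\Y(\L))$. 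Linear independence is exactly where the pairwise-independence hypothesis $\p_v\not\subseteq\p_w$ is used: it guarantees, via the approximation machinery of the previous subsection (the Approximation and Strong Approximation Theorems, which hold here because singletons — and hence the relevant finite subsets after the torsion reduction in the standing Assumption — have the inverse property), that one can separate the valuations $w_1,\dots,w_n$, so that a nontrivial dependence relation would force a contradiction on $w_i$-values for some $i$. Since $\dim_{\K_0}\U^{-1}(S/\Y(\L))=n_\L$, the inequality $\sum_i e_{v_i}f_{v_i}\le n_\L$ follows. Finally, since each term $e_{v_i}f_{v_i}\ge 1$ and $n_\L<\infty$, the set $\L$ cannot contain arbitrarily many distinct elements, so $\L$ is finite.

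The main obstacle I anticipate is not in the algebra — which the paper explicitly says transfers from \cite{Man,SZ} with minor changes — but in verifying that the passage $R\rightsquigarrow R/\Y(v)$, $S\rightsquigarrow S/\Y(\L)$ genuinely lands one in the commutative Manis setting with the hypotheses Manis needs, in particular that the induced valuations $\overline{w_i}$ on $S/\Y(\L)$ are still Manis (surjective) valuations extending $\overline{v}$ and that the independence condition $\p_v\not\subseteq\p_w$ is inherited by the quotient pairs $(A_{\overline v},\p_{\overline v})$, $(A_{\overline w},\p_{\overline w})$. This is handled by \Cref{Proposition:3.7}, \Cref{vhat:vline:are:valuations}, and the behaviour of supports under the standing Assumption (split superfields, torsion $\mathbb{H}/\mathbb{G}$), after which the even-case proof of \cite[Proposition 3.23]{Man} can be quoted. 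Accordingly, as the paper states, the detailed computation is omitted and only the reduction and the citation are given.
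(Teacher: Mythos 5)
Your proposal follows the paper's route exactly: the paper itself gives no proof, only the remark that the statement ``can be derived as \cite[Proposition 3.23]{Man}'' (itself an adaptation of \cite[\S 11, Theorem 19]{SZ}) with minor modifications, and your sketch is a correct unpacking of precisely that reduction-plus-citation — pass to the domain $R/\Y(v)$ and the ring $S/\Y(\L)$ via \Cref{Proposition:3.7} and \Cref{vhat:vline:are:valuations}, run the standard linear-independence count over the fraction field of $R/\Y(v)$ using the approximation machinery and the inverse property, and conclude finiteness of $\L$ since each term $e_{v_i}f_{v_i}\geq 1$. The only stylistic slip is a drift between the symbols $v_i$ and $w_i$ in your middle paragraph; otherwise the reasoning matches what Manis' argument requires in this setting.
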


\section{The geometry of valuations}\label{SEC:7} 

This section aims to initiate an investigation into selected geometric constructions using the valuation theory developed herein.

\subsection{Zariski-Riemann superspaces} 

\begin{definition}\label{Def.Zar}
    Let $L$ be a superfield and $K$ be a subsuperring of $L$. We write $\Zar(L, K)$ for the set of valuation superrings $A_v$ on $L$ such that $\mathbb{G}\neq0$ and $K\subseteq A_v$. We will also consider the trivial valuation superring $L$ to be in $\Zar(L, K)$. We call $\Zar(L, K)$ the \textit{Zariski-Riemann superspace} of $L\supseteq K$.
\end{definition}

Observe that if $(A, \p)$ and $(A, \q)$ are valuation pairs, then $\p=\q$. Hence, in \Cref{Def.Zar} it is correct to refer to the valuation superring and not to the valuation pair.

Recall from \Cref{REMAKR:6.1.6} that equivalent valuations define the same valuation superring and \textit{vice versa}. Thus, $\Zar(L, K)$ can also be regarded as the set of equivalence classes of nontrivial valuations on $L$ whose valuation ring contains the superring $K$.

\begin{remark}
   If in \Cref{Def.Zar} we consider $L$ to be a field, then we recover the usual Zariski-Riemann space of $L\mid K$. The Zariski topology on $\Zar(L, K)$ has a basis with basic open sets of the form $U(X_n)$ defined as follows. If $x_1, \ldots, x_n\in L$, we define $\U(X_n)=\Zar(L, K[\, x_1, \ldots, x_n])$. This also defines a topology in the super case. Namely, let $L$ be a superfield and $K$ a subsuperring of $L$. For $x_1, \ldots, x_n\in L\ev$ and $y_1, \ldots, y_m\in L\od$, we define 

\[
\U(X_n\mid  Y_m)=\Zar(L, K[\, x_1, \ldots, x_n\mid  y_1, \ldots,  y_m]),
\]

\noindent that is, $\U(X_n\mid  Y_m)$ is formed by the valuation rings $A_v$ of $L$ containing $K[x_1, \ldots, x_n\mid  y_1, \ldots,  y_m]$ and such that $v$ is nontrivial. Observe that  $\U(X_n\mid  Y_m)=U(X_n\mid Z_r)$, for any $z_1, \ldots, z_r\in L\od.$ Hence, we use $\U(X_n)$ instead of $\U(X_n\mid Y_m)$.  Note that $$\U(X_n)\cap \U(X'_m)=\U(X_n, X'_m), \quad\text{ for any }x_1, \ldots, x_n, x'_1, \ldots, x'_m\in L\ev.$$ Thus, the collection $\mathcal{F}=\left\{\,\U(X_n)\mid n\geq0, x_1, \ldots, x_n\in L\ev\right\}$ gives a basis of open sets for a topology on $\Zar(L, K)$, which we refer as the \textit{Zariski topology}.  
\end{remark}
 
\begin{proposition}\label{prop:7.4}
   $\Zar(L, K)$ is homeomorphic to $\Zar(\overline{L}, \overline{K})$.
\end{proposition}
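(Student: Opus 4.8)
The plan is to build a bijection between $\Zar(L,K)$ and $\Zar(\overline L,\overline K)$ and then check that it and its inverse are continuous for the Zariski topologies described above. The key input is \Cref{P:v.sends.J.to.infinity}, which says every valuation on $L$ kills the canonical superideal $\J_L$, so it factors through the superreduction $\overline L = L/\J_L$. First I would set up the two maps. Given a nontrivial valuation $v$ on $L$ with valuation superring $A_v \supseteq K$, \Cref{P:v.sends.J.to.infinity} gives $\J_L \subseteq \Y(v)$, so $v$ induces a map $\overline v \colon \overline L \to \mathbb{G}\cup\{\infty\}$ (this is essentially \Cref{Proposition:3.7} ii) together with the factoring already used for $\overline v$ in \Cref{vhat:vline:are:valuations}, applied with the support containing $\J_L$). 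Since $L$ is a superfield, $\overline L$ is a field, $\overline v$ is a genuine valuation on the field $\overline L$ in the classical sense, it is nontrivial because $\mathbb{G}\neq 0$, and its valuation ring $A_{\overline v}$ is the image of $A_v$ in $\overline L$; moreover $A_{\overline v} \supseteq \overline K$ because $A_v \supseteq K$. This defines $\Phi\colon \Zar(L,K)\to \Zar(\overline L,\overline K)$, $A_v \mapsto A_{\overline v}$, sending the trivial valuation superring $L$ to $\overline L$.

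\textbf{Bijectivity.} Conversely, given a nontrivial valuation $\overline w$ on the field $\overline L$ with valuation ring containing $\overline K$, compose with the projection $\pi\colon L \to \overline L$ to get $w := \overline w \circ \pi$, which is a valuation on $L$ (conditions i)–iii) of \Cref{Def:of:valuation} are immediate, and surjectivity holds since $\pi$ is surjective). Its valuation superring is $A_w = \pi^{-1}(A_{\overline w}) \supseteq \pi^{-1}(\overline K) \supseteq K$, so $A_w \in \Zar(L,K)$. I would then verify that these two assignments are mutually inverse. In one direction, $\pi^{-1}(A_{\overline v})$ is exactly $A_v$: the inclusion $A_v \subseteq \pi^{-1}(A_{\overline v})$ is clear, and for the reverse, if $\pi(x) \in A_{\overline v}$ then $\overline v(\pi(x)) = v(x) \geq 0$ using that $\overline v \circ \pi = v$, so $x \in A_v$. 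In the other direction, starting from $\overline w$, the induced map on $\overline L$ of $w = \overline w\circ\pi$ is $\overline w$ itself since $\pi$ is surjective. The one subtlety is that $\Zar$ is defined in terms of valuation superrings rather than valuations, so I should invoke \Cref{REMAKR:6.1.6} (and the remark right after \Cref{Def.Zar} that a valuation pair determines its prime, hence a valuation superring determines the equivalence class of valuations) to know the correspondence is well-defined on equivalence classes; this is routine.

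\textbf{Continuity both ways.} It remains to match the topologies. For the basic open set $\U(X_n) = \Zar(L, K[x_1,\dots,x_n])$ with $x_i \in L\ev$, I claim $\Phi(\U(X_n)) = \Zar(\overline L, \overline K[\overline{x_1},\dots,\overline{x_n}])$ and likewise $\Phi^{-1}$ carries a basic open set $\Zar(\overline L,\overline K[\overline y_1,\dots,\overline y_n])$ to a basic open set. This comes down to the equivalence
\[
K[x_1,\dots,x_n] \subseteq A_v \iff \overline K[\overline{x_1},\dots,\overline{x_n}] \subseteq A_{\overline v},
\]
which follows from $A_{\overline v} = \pi(A_v)$ together with the fact that $\pi$ is a surjective ring homomorphism sending $K$ onto $\overline K$ and $x_i$ to $\overline{x_i}$: any polynomial expression in the $\overline{x_i}$ over $\overline K$ is the image of the corresponding expression in the $x_i$ over $K$, and conversely any element of $K[x_1,\dots,x_n]$ projects into $\overline K[\overline{x_1},\dots,\overline{x_n}]$, so one subalgebra lies in $A_v$ exactly when the other lies in $A_{\overline v}$ (recalling that $\ker\pi = \J_L \subseteq \Y(v) \subseteq A_v$, so membership in $A_v$ only depends on the image in $\overline L$). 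Since the $\U(X_n)$ and their $\overline L$-analogues are bases, $\Phi$ and $\Phi^{-1}$ send basic opens to basic opens, hence $\Phi$ is a homeomorphism.

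\textbf{Main obstacle.} I expect no deep difficulty here; the one point requiring care is the bookkeeping between ``valuation superring'' and ``equivalence class of valuations'' so that $\Phi$ is genuinely well-defined and injective at the level of the sets $\Zar(L,K)$ and $\Zar(\overline L,\overline K)$, rather than merely on valuations — but \Cref{REMAKR:6.1.6} and the remark following \Cref{Def.Zar} handle exactly this. The second mild point is to confirm that restricting to even generators $x_i \in L\ev$ in the definition of the basis loses no information on the $\overline L$ side either, which is clear because $\overline L$ is purely even, so the $\overline L$-topology's basis is automatically indexed by finite subsets of $\overline L = \overline L\ev$.
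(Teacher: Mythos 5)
Your proposal is correct and follows essentially the same route as the paper: both construct the map $A_v \mapsto A_{\hat v}$ (using that $L$ being a superfield forces $\Y(v)=\J_L$, so $v$ factors through $\overline L$), verify it is well-defined on equivalence classes and bijective by composing with the projection $\pi\colon L\to\overline L$ in the reverse direction, and then check that basic opens $\U(X_n)$ correspond to basic opens $\U(X'_n)$ with $x'_i=x_i+\J_L$. Your write-up is somewhat more explicit about the inverse map and the open-set bookkeeping, but the underlying argument is identical.
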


\begin{proof}
     Consider any ``point'' $A_v\in\Zar(L, K)$, other than $L$. Let $v:L\to\mathbb{G}_\infty$ be the corresponding non-trivial valuation. Since $L$ is a superfield, then $\Y(v)=\J_L$. Thus,  $k(\Y(v))=L/\Y(v)=\overline{L}$. Let 
     $\hat{v}:\overline{L}\to\mathbb{G}_\infty, x+\Y(v)\mapsto v(x)$ be the induced valuation of $v$ on $\overline{L}$. Then $\hat{v}$ is non-trivial and $\overline{L}\subseteq A_{\hat{v}}$. Note that if $A_v=A_w$, then $v$ and $w$ are equivalent and $\hat{v}$ is equivalent to $\hat{w}$, so $A_{\hat{v}}=A_{\hat{w}}$. Thus,  
     
     \[
     \psi:\Zar(L, K)\to\Zar(\overline{L}, \overline{K}), A_v\mapsto A_{\hat{v}}
     \]
     is well defined. Now assume that $A_v, A_w\in\Zar(L, K)$ are such that $A_{\hat{v}}=A_{\hat{w}}$. Then, $\hat{v}=\hat{w}\circ\phi$, where $\phi:\mathbb{H}\to\mathbb{G}$ is an order isomorphism. By the assumptions, $v$ extends $\hat{v}$ to $L$ and $w\circ \phi$ extends $\hat{w}\circ \phi$ to $L$. Observe that $v=w\circ\phi$, so $A_v=A_w$ and the correspondence is one-to-one. Now, if $v_0:\overline{L}\to\mathbb{G}_\infty$ is such that $A_{v_0}\in\Zar(\overline{L}, \overline{K})$, then we may extend $v_0$ to $L$ putting $v(x)=v_0(\overline{x})$ for all $x\in L$. It follows that $K\subseteq A_v$ and $A_v\in\Zar(L, K)$ is mapped to $A_{v_0}=A_{\hat{v}}$. Hence, $\psi$ is bijective. Finally, $\psi(U(X_n))=U(X'_n)$, with $x'_i=x_i+\J_L$. Therefore $\psi$ is an homeomorphism, because it is a continuous open bijection.
\end{proof}

By \cite[\S4, Corollary 1 to Theorem 4 and Corollary 1 to Theorem 5]{SZ}, the only case in which the set $\Zar(\overline{L}, \overline{K})$ is $\{\overline{L}\}$ is when $\overline{K}$ is a field and $\overline{L}$ is an algebraic extension of the field $\overline{K}$. Thus, $\Zar(L, K)$ is $\{L\}$ only in that case, which we will exclude in our analysis.
\\

We now define a sheaf $\O_X$ on $X=\Zar(L, K)$. For any nonempty open set $U\subseteq X$, we define the superring 

\begin{equation}\label{eq:def:OX}
    \O_X(U)=\bigcap_{A_v\in U}A_v\quad \text{and}\quad\O_X(\emptyset)=0.
\end{equation}
The superring $\O_X(U)$ is called the \textit{coordinate superring} of $U$. Since  $U\subseteq V$ implies $\O_X(V)\subseteq\O_X(U)$,  the restriction mappings are given by the inclusions. Observe that $\O_{X, A_v}=A_v$, so $(X, \O_X)$ is a locally ringed superspace, by \Cref{equiv:v:local}.  

\begin{problem}
     Develop a theory of Zariski-Riemann superspaces and compare this theory with its purely even counterpart. 
\end{problem}

\begin{problem}
    Develop a theory of birrational supergeometry and study if it may be modeled via Zariski-Riemann superspaces (cf. \cite{PM}).
\end{problem}

\subsection{Abstract arithmetically non-singular supercurves}  

Any superfield considered in this subsection is supposed to be split (\Cref{def.sev.cond} v)). 

Consider $L\mid K$ a superfields extension (i.e., $L\supseteq K$). Discrete valuations on $L$ that are trivial on $K$ are called {\it valuations of} $L\mid K$.  

Let $K$ be a superfield and $v:K\to\Z_\infty$ a discrete valuation. Let $\Bbbk$ be the superreduction of $K$. By \Cref{vhat:vline:are:valuations}, we have a valuation $\hat{v}:\Bbbk=K/\Y(v)\to\Z_\infty$ induced by $v$. In a similar way, if we have a valuation $v_0:\Bbbk\to\Z_\infty$, then it induces a valuation $v$ on $K$ such that $\hat{v}=v_0$. Thus, we may identify $v$ and $\hat{v}$, as well as the pairs $(A_v, \p_v)$ and $(\mathfrak{o}_v, \m_v):=(A_{\hat{v}}, \p_{\hat{v}})$ may be identified. 

Now assume that $\overline{L}$ is a function field of dimension one over $\Bbbk:=\overline{K}$, i.e., it is a finitely generated extension field of transcendence degree one. In particular, we are assuming on $L$ and on $K$ the $\Bbbk$-superalgebra structure induced by $\Bbbk$, which is assumed to be algebraically closed. 

Let $C_L$ be the set of valuation superrings of discrete valuations of $L\mid K$ and define $C_{\overline{L}}$ in a similar way. Note that if the underlying superfields have nontrivial odd part, then $C_{\overline{L}}\neq C_L$, but we have a one to one correspondence between these sets. We introduce the cofinite topology on $C_{\overline{L}}$. Topologize $C_L$ through  the bijection $\psi:C_L\to C_{\overline{L}}, A_v\to\mathfrak{o}_v$. Thus, we have a natural homeomorphism  $C_{\overline{L}}\simeq C_L$. It is a folklore that $X_{\mathrm{ev}}:=C_{\overline{L}}$ is infinite, so $X:=C_L$ is so.

Consider the structure sheaf $\O_X$ on $X$ given by \eqref{eq:def:OX}. This is a sheaf of $\Bbbk$-superalgebras. Also, note that if  $\O_{X_{\mathrm{ev}}}$ is defined in the obvious way, then for all open $U$, we have $
\overline{\O_{X}(U)}=\O_{X_{\mathrm{ev}}}(U)$, where $U$ is identified with its image in $X_{\mathrm{ev}}$. Furthermore, $\O_X(U)=\O_{X_{\mathrm{ev}}}(U)\oplus\mathcal{J}_{\O_{X}(U)}$. 

Now consider $s\in\O_{X_{\mathrm{ev}}}(U)$. By classical theory, $s$ can be viewed as a function $s:U\to\Bbbk$. Note that $t\in\J_{\O_{X}(U)}$ is a section that is non-zero, but evaluates to zero at every point. The section $s+t:U\to\Bbbk$ is defined in the obvious way. Furthermore, if $f, g:U\to\Bbbk$ define the same function, then $f-g\in\J_{\O_X(U)}$. Hence, every element in $\O_{X_{\mathrm{ev}}}(U)$  can be identified with a function $U\to\Bbbk$. Now, let $a\in\overline{L}$. Then, there exists some $p\in X_{\mathrm{ev}}$ such that $a\in p$. Therefore, every element of $L$ is some point of $X$. Thus, every element of $L$ is in some $\O_X(U)$. 

Recall that the space $X$ is irreducible, Noetherian, and have closed points. We define $K(C_L)$ to be the set of pairs of the form $(U, f)$, where $U$ is a nonempty subset of $X$ and $f\in\O_X(U)$. Consider the equivalence relation $(U, f)\sim (V, g)$ if there exists $W\subseteq U\cap V$ such that $\overline{f}|_{W}=\overline{g}|_{W}$. Since every $\O_X(U)$ is contained in $L$, $K(C_L)$ has a natural structure of superring induced by $L$. The superfield of functions of $C_L$ is $L$, because every element of $L$ is in some $\O_X(U)$. 

\begin{remark}
It seems reasonable to expect that realizable discrete valuation superrings could be non-Noetherian and that $(X, \mathcal{O}_X)$ constructed above could have superdimension $1|\infty$; that is, for an affine $U$, it might occur that $\mathcal{O}_X(U)_{\bar{1}}$ is not finitely generated as an $\mathcal{O}_X(U)_{\bar{0}}$-module, while the superreduced of $(X, \mathcal{O}_X)$ has dimension one. This implies that regularity, and consequently non-singularity (in the sense of \cite{MZ}), might be unattainable in this context, except in the trivial  case. To address this issue, we will henceforth restrict our consideration to cases where Noetherianity holds. Therefore, our superrings will be Noetherian, integrally closed, and of even Krull superdimension one. However, this is insufficient to create a theory completely analogous to the classical one. Indeed, regularity (and hence non-singularity) does not hold unless the superrings have a trivial odd part \cite[Remark 5.12]{RTT}. We thus use the term \textit{arithmetically regular} for a superring that is integrally closed, Noetherian, and of even Krull superdimension one.
\end{remark}

\begin{definition}
    An \textit{abstract arithmetically non-singular supercurve} is any open subset of $X$ with the restricted sheaf of $\O_X$.  An \textit{abstract non-singular curve} is any open subset of $X_{\mathrm{ev}}$ with the obvious sheaf.  
\end{definition}

The selection of the surname ``arithmetically'' is based on the fact that the rings in question display an excellent arithmetical tendency with regard to their superideals. For example, the prime factorization of superideals is a consequence of this property, among other results. It should be noted that our construction is not merely non-singular in the sense of \cite{MZ}, unless we are working in the purely even case. The model we have constructed is relatively large, in the sense that the rings have sufficient coefficients to be normal, a property that regular superrings lack.

\begin{problem}
Develop a theory of arithmetically non-singular supercurves.
\end{problem}

A natural observation from comparing non-singular and arithmetically non-singular supercurves is as follows. By \cite[Proposition 6.9]{RTT}, the stalks $\mathcal{O}_{X,x}$ of a (non purely even) non-singular supercurve $(X, \mathcal{O}_X)$ are superdomains for all $x\in X$. By \Cref{Proposition.5.3.8}, $\mathcal{O}_{X, x}\subsetneq\mathrm{cl}(\mathcal{O}_{X, x})\subseteq\bigcap_{R\in \mathcal{S}}R$, where $\mathcal{S}$ is the collection of valuation superrings of $\mathcal{O}_{X,\eta}$ containing $\mathcal{O}_{X,x}$ (with $\eta$ the generic point). However, for an arithmetically non singular supercurve $(X, \mathcal{O}_X)$, we find that $\mathcal{O}_{X, x}=\mathrm{cl}(\mathcal{O}_{X,x}) = \bigcap_{R \in \mathcal{S}} R$. As a result, a substantial difference is anticipated between the theory developed for arithmetically non-singular supercurves and that of \cite{RTT}, specifically when the odd part is non-trivial.

\section*{Acknowledgements}
P. Rizzo and J. Torres del Valle gratefully acknowledge partial support from CODI (Universidad de Antioquia, UdeA) through project numbers 2020-33713, 2022-52654 and 2023-62291.

A. Torres-Gomez is grateful to the Shanghai Institute for Mathematics and Interdisciplinary Sciences (SIMIS, China) for their hospitality and partial financial support during his visit in December 2024 and January 2025, where a part of this project was completed.

\end{document}